\newtheorem{theorem}{Theorem}[section]
\newtheorem{lemma}[theorem]{Lemma}
\newtheorem{proposition}[theorem]{Proposition}
\newtheorem{corollary}[theorem]{Corollary}
\newtheorem*{theorem:pirho}{Theorem~\ref{T:pirho}}
\newtheorem*{theorem:Lvalue1}{Theorem~\ref{T:Lvalue1}}
\newtheorem*{theorem:Lambdavalue1}{Theorem~\ref{T:Lambdavalue1}}
\newtheorem*{corollary:Lambdaclassno}{Corollary~\ref{C:Lambdaclassno}}
\theoremstyle{definition}
\newtheorem{example}[theorem]{Example}
\newtheorem*{acknowledgments}{Acknowledgments}
\theoremstyle{remark}
\newtheorem{remark}[theorem]{Remark}
\newcommand{\C}{\ensuremath \mathbb{C}}
\newcommand{\Z}{\ensuremath \mathbb{Z}}
\newcommand{\LL}{\mathbb{L}}
\newcommand{\BB}{\mathbb{B}}
\newcommand{\F}{\ensuremath \mathbb{F}}
\newcommand{\TT}{\mathbb{T}}
\newcommand{\bA}{\mathbf{A}}
\newcommand{\bH}{\mathbf{H}}
\newcommand{\bK}{\mathbf{K}}
\newcommand{\cE}{\mathcal{E}}
\newcommand{\cF}{\mathcal{F}}
\newcommand{\cL}{\mathcal{L}}
\newcommand{\cO}{\mathcal{O}}
\newcommand{\cQ}{\mathcal{Q}}
\newcommand{\cR}{\mathcal{R}}
\newcommand{\cS}{\mathcal{S}}
\newcommand{\cT}{\mathcal{T}}
\newcommand{\cU}{\mathcal{U}}
\newcommand{\fa}{\mathfrak{a}}
\newcommand{\fb}{\mathfrak{b}}
\newcommand{\fc}{\mathfrak{c}}
\newcommand{\fM}{\mathfrak{M}}
\newcommand{\fp}{\mathfrak{p}}
\newcommand{\fq}{\mathfrak{q}}
\newcommand{\inv}{\ensuremath ^{-1}}
\DeclareMathOperator{\Cl}{Cl}
\DeclareMathOperator{\divisor}{div}
\DeclareMathOperator{\Ext}{Ext}
\DeclareMathOperator{\Fr}{Fr}
\DeclareMathOperator{\Gal}{Gal}
\DeclareMathOperator{\ord}{ord}
\DeclareMathOperator{\Res}{Res}
\DeclareMathOperator{\sgn}{sgn}
\DeclareMathOperator{\Span}{Span}
\DeclareMathOperator{\Spec}{Spec}
\newcommand{\twist}{^{(1)}}
\newcommand{\invtwist}{^{(-1)}}
\newcommand{\twisti}{^{(i)}}
\newcommand{\twistk}[1]{^{(#1)}}
\newcommand{\oa}{\overline{a}}
\newcommand{\oalpha}{\overline{\alpha}}
\newcommand{\ob}{\overline{b}}
\newcommand{\obeta}{\overline{\beta}}
\newcommand{\oc}{\overline{c}}
\newcommand{\ogamma}{\overline{\gamma}}
\newcommand{\osigma}{\overline{\sigma}}
\newcommand{\oK}{\mkern2.5mu\overline{\mkern-2.5mu K}}
\newcommand{\iso}{\stackrel{\sim}{\to}}
\newcommand{\power}[2]{{#1 [[ #2 ]]}}
\newcommand{\laurent}[2]{{#1 (( #2 ))}}
\newcommand{\Ehat}{\widehat{E}}
\newcommand{\pd}{\partial}
\newcommand{\tchi}{\widetilde{\chi}}
\newcommand{\tE}{\widetilde{E}}
\newcommand{\tlambda}{\widetilde{\lambda}}
\newcommand{\tLambda}{\widetilde{\Lambda}}
\newcommand{\tf}{\widetilde{f}}
\newcommand{\tg}{\widetilde{g}}
\newcommand{\tpi}{\widetilde{\pi}}
\newcommand{\ttt}{\widetilde{t}}
\newcommand{\ty}{\widetilde{y}}
\newcommand{\tsgn}{\widetilde{\sgn}}
\def\XXint#1#2#3{{\setbox0=\hbox{$#1{#2#3}{\int}$ }
\vcenter{\hbox{$#2#3$ }}\kern-.6\wd0}}
\begin{document}

\title[Special $L$-values and shtuka functions]{Special $L$-values and shtuka functions for \\ Drinfeld modules on elliptic curves}

\author{Nathan Green}
\address{Department of Mathematics, Texas A{\&}M University, College Station,
TX 77843, USA}
\email{jaicouru@gmail.com}

\author{Matthew A. Papanikolas}
\address{Department of Mathematics, Texas A{\&}M University, College Station,
TX 77843, USA}
\email{papanikolas@tamu.edu}

\thanks{This project was partially supported by NSF Grant DMS-1501362}

\subjclass[2010]{Primary 11G09; Secondary 11R58, 11M38, 12H10}

\date{October 7, 2017}

\begin{abstract}
We make a detailed account of sign-normalized rank~$1$ Drinfeld $\mathbf{A}$-modules, for~$\mathbf{A}$ the coordinate ring of an elliptic curve over a finite field, in order to provide a parallel theory to the Carlitz module for $\mathbb{F}_q[t]$.  Using precise formulas for the shtuka function for $\mathbf{A}$, we obtain a product formula for the fundamental period of the Drinfeld module.  Using the shtuka function we find identities for deformations of reciprocal sums and as a result prove special value formulas for Pellarin $L$-series in terms of an Anderson-Thakur function.  We also give a new proof of a log-algebraicity theorem of Anderson.
\end{abstract}

\keywords{Drinfeld modules, Pellarin $L$-series, shtuka functions, reciprocal sums, Anderson generating functions, log-algebraicity}

\maketitle

\tableofcontents

\section{Introduction} \label{S:Intro}

We let $\theta$ and $t$ be independent variables over a finite field $\F_q$ of $q$ elements.  Based on the notion of rigid analytic trivialization of Anderson~\cite{And86}, Anderson and Thakur~\cite[\S 2]{AndThak90} introduced the function
\begin{equation} \label{omegaC}
  \omega_C = (-\theta)^{1/(q-1)} \prod_{i=0}^\infty \biggl( 1 - \frac{t}{\theta^{q^i}} \biggr)^{-1},
\end{equation}
which converges in the Tate algebra $\TT$ of rigid analytic functions in $t$ on the closed unit disk of $\C_\infty$, where $\C_\infty$ is the completion of an algebraic closure of the Laurent series field $\laurent{\F_q}{1/\theta}$.  The function $\omega_C$ is meromorphic on all of $\C_\infty$ in the rigid analytic sense, and it satisfies the difference equation
\begin{equation} \label{omegaCdiffeq}
  \omega_C^{(1)} - (t-\theta)\omega_C = 0,
\end{equation}
where the twisting $\omega_C^{(1)}$ is defined to be the element of $\TT$ obtained by raising the coefficients of $\omega_C$, as a power series in $t$, to the $q$-th power.  Anderson and Thakur~\cite{AndThak90} showed that $\omega_C$ is central to the theory of the Carlitz module and its tensor powers, and moreover,
\begin{equation} \label{Carlitzper}
\Res_{t=\theta}(\omega_C) = -\tpi = (-\theta)^{q/(q-1)} \prod_{i=1}^\infty \Bigl(
1 - \theta^{1-q^i} \Bigr)^{-1},
\end{equation}
where $\tpi$ is the period of the Carlitz module.  Sinha~\cite[\S 4]{Sinha97} demonstrated that the theory of $\omega_C$ could be extended to certain $t$-modules with complex multiplication and that, more generally, logarithms on these $t$-modules could be obtained essentially through Anderson generating functions (see \S\ref{S:AndGenFun}, and also \cite{EP14}, \cite[\S 4.2]{Pellarin08}).

More recently Pellarin~\cite{Pellarin12} introduced a new class of $L$-series for $\F_q[t]$.  We define a quasi-character $\chi : \F_q[\theta] \to \F_q[t]$ by setting $\chi(a) = a(t)$, where $a(t)$ means the polynomial $a\in \F_q[\theta]$ evaluated at $\theta=t$, and then for $s \in \Z_+$ we set
\[
  L(\F_q[t]; s) = \sum_{a \in \F_q[\theta]_+} \frac{\chi(a)}{a^s} = \sum_{a \in \F_q[\theta]_+} \frac{a(t)}{a(\theta)^s}.
\]
Pellarin proved a number of remarkable facts about $L(\F_q[t];s)$, including that for fixed $s$ the function $L(\F_q[t];s)$ is an element of $\TT$ and extends to an entire function on $\C_\infty$.  He proved a special value formula at $s=1$ in terms of $\omega_C$~\cite[Thm.~1]{Pellarin12}, namely
\begin{equation} \label{Pellarinformula}
  L(\F_q[t];1) = -\frac{\tpi}{(t-\theta) \omega_C},
\end{equation}
which serves to interpolate values of Goss $L$-series of Dirichlet type as well as Carlitz zeta values at some negative integers.  There has been an abundance of research in recent years on Pellarin $L$-series, including special value formulas and multivariable generalizations (e.g., see~\cite{AnglesNgoDacRibeiro16b}, \cite{AnglesPellarin14}--\cite{AnglesPellarinRibeiro16}, \cite{Goss13}, \cite{PellarinPerkins16}--\cite{Perkins14b}).

In the present paper we undertake a comprehensive study of sign normalized rank~$1$ Drinfeld $\bA$-modules for rings $\bA$ that are coordinate rings of elliptic curves over finite fields.  We let $E$ be an elliptic curve over~$\F_q$ and let $\bA= \F_q[t,y]$ be its coordinate ring via a cubic Weierstrass equation.  For an isomorphic copy of $\bA$, which we label $A = \F_q[\theta,\eta]$ with fraction field $K = \F_q(\theta,\eta)$, we consider a particular rank~$1$ Drinfeld $\bA$-module
\[
  \rho : \bA \to H[\tau],
\]
where $H$ is the Hilbert class field of $K$ and $H[\tau]$ is the ring of twisted polynomials in the $q$-th power Frobenius endomorphism~$\tau$.  The Drinfeld module $\rho$ is sign-normalized in the sense of Hayes \cite{Hayes79}, \cite{Hayes92}, thus definable over $H$.  The construction of $\rho$, due to Thakur~\cite{Thakur93} and based on work of Drinfeld and Mumford (see \cite[Ch.~6]{Goss}, \cite{Mumford78}), is rooted in the shtuka function~$f$ for~$E$.  We have a natural point $\Xi = (\theta,\eta) \in E(K)$, and we can find a point $V=(\alpha,\beta) \in E(H)$ so that the function $f \in H(t,y)$ has divisor on $E$ given by
\[
  \divisor(f) = (V^{(1)}) - (V) + (\Xi) - (\infty),
\]
where $V^{(1)}$ is the image of $V$ under the $q$-th power Frobenius.  We specify that $V$ be chosen to be in the formal group of $E$ at the infinite place of $K$ and that $f$ have sign~$1$ (see~\S\ref{S:Notation} for details on signs), and in this way $f$ is uniquely determined.  In \eqref{fdef} we write $f = \nu/\delta$ for explicit $\nu$, $\delta \in H[t,y]$.  See \S\ref{S:DrinfeldModules} for the precise ways one uses $f$ to construct $\rho$, as well as its exponential and logarithm functions $\exp_\rho$ and $\log_\rho$.

To determine the period lattice $\Lambda_\rho = \ker(\exp_\rho)$ contained in $\C_\infty$ we construct $\omega_\rho$, which is an analogue of the Anderson-Thakur function~$\omega_C$, via the infinite product
\[
  \omega_\rho = \xi^{1/(q-1)} \prod_{i=0}^\infty \frac{\xi^{q^i}}{f^{(i)}},
\]
where $f^{(i)}$ is the $i$-th Frobenius twist of $f$ (see \S\ref{S:Notation}) and $\xi = -(m + \beta/\alpha)$ (see \eqref{mquotients} for the definition of $m$).  The product for $\omega_\rho$ converges in the Tate algebra $\TT[y]$, and it extends to an entire function on $E \setminus \{\infty\}$.  By comparison to~\eqref{omegaCdiffeq} we find that
\begin{equation} \label{omegarhotwistintro}
  \omega_\rho^{(1)} - f \cdot \omega_\rho = 0,
\end{equation}
and using the theory of Anderson generating functions we prove the following theorem.

\begin{theorem:pirho}
We have $\Lambda_\rho = A \pi_\rho$.  Setting $\xi = -(m + \beta/\alpha)$ and $\lambda = dt/(2y+a_1t+a_3)$,
\[
  \pi_\rho = -\!\Res_{\Xi} (\omega_\rho \lambda) = - \frac{\xi^{q/(q-1)}}{\delta^{(1)}(\Xi)} \prod_{i=1}^\infty \frac{\xi^{q^i}}{f^{(i)}(\Xi)}.
\]
\end{theorem:pirho}

One of the subtle problems in proving Theorem~\ref{T:pirho}, which does not arise in the $\F_q[t]$ case, is the need to work around the pole of~$f$ at~$V$.  Indeed in the $\F_q[t]$ case the shtuka function is simply $t-\theta$, whereas here $f$ is a bit more complicated (see \eqref{fdef}).  Work of Sinha~\cite{Sinha97} shows ways to overcome these problems, but we use Anderson generating functions to devise a more streamlined approach of identifying logarithms of algebraic points in terms of residues of functions satisfying Frobenius difference equations (see Theorem~\ref{T:varepstheorem}), of which Theorem~\ref{T:pirho} is a special case.  A similar approach for defining such functions $\omega_\rho$ for all genera has been devised independently by Angl\`es, Ngo Dac, and Tavares Ribeiro~\cite{AnglesNgoDacRibeiro16a}.

We define two types of Pellarin $L$-series for $\bA$, the first of which is
\[
  L(\bA;s) = \sum_{a \in A_+} \frac{\chi(a)}{a^s} = \sum_{a \in A_+} \frac{a(t,y)}{a(\theta,\eta)^s},
\]
where $A_+$ denotes the elements of $A$ of sign~$1$.  For fixed $s \in \Z_+$, the sum $L(\bA;s)$ converges in $\TT[y]$.  We then prove the following result, which provides a version of~\eqref{Pellarinformula} for the value of $L(\bA;s)$ at $s=1$.

\begin{theorem:Lvalue1}
As elements of $\TT[y]$,
\[
  L(\bA;1) = -\frac{\delta^{(1)}\, \pi_\rho}{f \omega_\rho}.
\]
\end{theorem:Lvalue1}

For the second type of Pellarin $L$-series, we recall an extension of $\rho$ to integral ideals of $A$ due to Hayes~\cite{Hayes79} (see \S\ref{S:LSeries}), where for non-zero $\fa \subseteq A$, we have $\rho_\fa \in H[\tau]$, and we set
\[
  \chi(\fa) = \frac{\rho_{\fa}(\omega_\rho)}{\omega_\rho},
\]
which is shown to be an element of $H(t,y)$ (see Lemma~\ref{L:chiprops}) and extends $\chi$ above.  We set
\[
  \LL(\bA;s) = \sum_{\fa \subseteq A} \frac{\chi(\fa)}{\pd(\rho_{\fa})^s},
\]
where $\pd(\rho_{\fa})$ is the constant term of $\rho_{\fa}$ with respect to $\tau$ and where by convention we sum only over $\fa \neq 0$.  We note that historically $\pd(\rho_{\fa})$ has been difficult to compute, even when knowing $\rho$ exactly (cf.\ Hayes~\cite[\S 10--11]{Hayes79}), but the utility of the shtuka function leads to precise formulas (see Lemmas~\ref{L:rhofp} and~\ref{L:chiprops}).
We then prove an extended version of Theorem~\ref{T:Lvalue1}.

\begin{theorem:Lambdavalue1}
As elements of $\TT[y]$,
\[
  \LL(\bA;1) = -\sum_{\sigma \in \Gal(H/K)} \biggl( \frac{\delta^{(1)}}{f} \biggr)^{\sigma} \cdot \frac{\pi_\rho}{\omega_\rho},
\]
where $H$ is the Hilbert class field of $K$.
\end{theorem:Lambdavalue1}

Using different methods, more recently Angl\`{e}s, Ngo Dac, and Tavares Ribeiro~\cite{AnglesNgoDacRibeiro16a} have also shown that similar sums are rational multiples of $\pi_{\rho}/\omega_{\rho}$, but although their arguments work for all genera, they do not obtain precise formulas.

By evaluating at $\Xi$ we obtain the following corollary that relates the value of $\LL(\bA;1)$ to the class number of $\bA$ itself (as elements of $\F_q$).

\begin{corollary:Lambdaclassno}
Let $h(\bA) = \#E(\F_q)$ be the class number of $\bA$. Then
\[
\LL(\bA;1)\big|_{\Xi} = h(\bA).
\]
\end{corollary:Lambdaclassno}

The primary tools for proving Theorems~\ref{T:Lvalue1} and~\ref{T:Lambdavalue1} are results on interpolations of reciprocal sums in \S\ref{S:sums}, which generalize work of Angl\`{e}s, Pellarin, and Simon~\cite{AnglesPellarin14}, \cite{AnglesPellarin15}, \cite{AnglesSimon}, and of Thakur~\cite{Thakur92}.  Namely for $i \geq 0$, we consider sums
\[
  S_i = \sum_{a \in A_{i+}} \frac{1}{a}, \quad S_{\fp,i} = \sum_{a \in \fp_{i+}} \frac{1}{a},
\]
where $\fp \subseteq A$ is a prime ideal of degree~$1$ corresponding to an $\F_q$-rational point on~$E$ and where $A_{i+}$, $\fp_{i+}$ denote subsets of elements of degree~$i$ and sign~$1$.  We consider also deformations in $t$, $y$ given by functions on $E$,
\[
  \cS_{i} = \sum_{a \in A_{i+}} \frac{a(t,y)}{a(\theta,\eta)}, \quad
  \cS_{\fp,i} = \sum_{a \in \fp_{i+}} \frac{a(t,y)}{a(\theta,\eta)} \quad \in K[t,y].
\]
We apply the methods of Angl\`{e}s, Pellarin, and Simon to find exact formulas for each of these quantities.  For example, in Proposition~\ref{P:SiSpi} we find for $i \geq 2$,
\begin{equation}
  \cS_i = S_i \cdot \frac{g_i}{\nu^{(i-1)}}\cdot f f^{(1)} \cdots f^{(i-1)},
\end{equation}
where $\nu$ is the numerator of the shtuka function in~\eqref{fdef} and $g_i$ is given by a specific linear polynomial in $t$, $y$ in~\eqref{giformula}.  We further determine a formula for $S_i$ itself in Theorem~\ref{T:Siformula}, namely for $i \geq 2$,
\begin{equation} \label{Siformulaintro}
  S_i = \frac{\nu^{(i)}}{g_i^{(1)} \cdot f^{(1)} \cdots f^{(i)}} \Bigg|_{\Xi},
\end{equation}
which provides a functional interpretation of a previous formula of Thakur~\cite[Thm.~IV]{Thakur92}.  We should also compare~\eqref{Siformulaintro} with older results of Carlitz~\cite[Eq.~(9.09)]{Carlitz35} on reciprocal sums for $\F_q[\theta]$ (see \eqref{Carlitzsum}).  Similar formulas for $S_{\fp,i}$ and $\cS_{\fp,i}$ are obtained in Proposition~\ref{P:SiSpi} and Theorem~\ref{T:Siformula}.

In \cite{And94}, \cite{And96}, Anderson introduced the notion of log-algebraic power series identities for Drinfeld modules of rank~$1$ and connected these identities to the theory of Goss $L$-series and special zeta values.  The results on reciprocal sums and their interpolations from \S\ref{S:sums} also yield a new function theoretic proof of a theorem of Anderson~\cite[Thm.~5.1.1]{And94} in \S\ref{S:LogAlg} (see Theorem~\ref{T:Anderson}). The approach is similar to a proof of Thakur~\cite[\S 8.10]{Thakur} for the $\F_q[t]$ case, but it is somewhat more involved due to the intricacies of the formulas for $S_i$ and $S_{\fp,i}$ in Theorem~\ref{T:Siformula}.  Nevertheless, as a result we obtain precise information about the special polynomials in Anderson's theorem in terms of decompositions via the shtuka function (see Remark~\ref{Rem:Ebz}).  It would be interesting to work out the connections between these identities and the corresponding class modules of Taelman~\cite{Taelman12} and more recently of Angl\`es, Ngo Dac, and Tavares Ribeiro~\cite{AnglesNgoDacRibeiro17}, though we do not pursue an investigation here.  In \S\ref{S:Examples} we provide examples of the various techniques found in the paper.

One of our overriding goals has been to develop a theory for $\rho$ that is as explicit as the one now well-established for the Carlitz module \cite[Ch.~3]{Goss}, \cite[Ch.~2]{Thakur}.  It is natural to ask whether the current methods can be extended to rings $\bA$ associated to curves of genus~$\geq 2$.  Thakur's construction of the Drinfeld module $\rho$ in~\cite{Thakur93} does not require genus~$1$, but in higher genus cases the point $V$ now becomes an effective divisor of degree equal to the genus, making the construction of the shtuka function $f$ more difficult.  Furthermore, here we make frequent use of the group law on the elliptic curve $E$, and we imagine in higher genus similar constructions will require additional study of the Jacobian of the curve.  It is also natural to consider how our techniques can be used to investigate tensor powers $\rho^{\otimes n}$ over $\bA$ as in~\cite{AndThak90}, which will be the subject of future work.

\begin{acknowledgments}
The authors thank B.~Angl\`{e}s, C.-Y.~Chang, D.~Goss, U.~Hartl, and F.~Pellarin for advice and comments on previous versions of this paper.  We also thank the referees for several helpful suggestions.
\end{acknowledgments}

\section{Setting and notation} \label{S:Notation}

Let $q$ be a fixed power of a prime $p$, and let $\F_q$ be the field with $q$ elements.  We let $E$ be an elliptic curve defined over $\F_q$, given by the equation
\begin{equation}\label{Eeq}
  E: y^2 + a_1 ty + a_3 y  = t^3 + a_2 t^2 + a_4 t + a_6, \quad a_i \in \F_q,
\end{equation}
with point at infinity set to be $\infty$.  We let $\bA = \F_q[t,y]$ be the coordinate ring of functions on $E$ regular away from $\infty$, and we let $\bK = \F_q(t,y)$ be its fraction field.  We let
\[
  \lambda = \frac{dt}{2y + a_1 t + a_3}
\]
be the invariant differential on $E$.

We also fix other variables $\theta$, $\eta$, with $\theta$ independent from $t$ and $\eta$ independent from $y$ over~$\F_q$ so that $A=\F_q[\theta,\eta]$ and $K = \F_q(\theta,\eta)$ are isomorphic
copies of $\bA$ and $\bK$, together with canonical isomorphisms,
\begin{equation}\label{caniso}
  \chi : K \to \bK, \quad \iota : \bK \to K.
\end{equation}
For $a\in K$ we will also write $\oa = \chi(a) \in \bK$.  We let $\ord_\infty$ denote the valuation of $K$ at the infinite place, and we set $\deg = -\ord_\infty$.  They are normalized so that
\[
  \deg(\theta) = 2, \quad \deg(\eta) = 3,
\]
and as such we have an absolute value on $K$ defined by $|g| = q^{\deg(g)}$, $g \in K$.  When needed we will also define $\ord_\infty$ and $\deg$ on $\bK$ in the same way.  We set $K_\infty$ to be the completion of $K$ at the infinite place, and we take $\C_\infty$ to be the completion of an algebraic closure of $K_\infty$.  We note that by design
\begin{equation}
  \Xi = (\theta,\eta)
\end{equation}
is a $K$-rational point of $E$ and further that $\Xi$ is an element of the formal group $\Ehat(\fM_{\infty})$, where $\fM_\infty$ is the maximal ideal of the valuation ring of $K_\infty$.

As an $\F_q$-vector space, $\bA$ has a basis
\begin{equation} \label{AFqbasis}
  \bA = \Span_{\F_q}(t^i, t^jy : i \geq 0, j \geq 0),
\end{equation}
and since the monomials listed have distinct degrees, we can define the notion of the leading term of a non-zero element $a \in \bA$.  From this we define a sign function $\sgn : \bA\setminus \{ 0 \} \to \F_q^{\times}$, by setting $\sgn(a) \in \F_q^\times$ to be the leading coefficient of~$a \in \bA \setminus \{ 0 \}$.  This definition then extends to a group homomorphism on the completion $\bK^{\times}_{\infty}$,
\[
  \sgn:\bK^{\times}_{\infty}\to \F_q^\times.
\]
We say that an element of $\bA$ (or more generally of $\bK_{\infty}$) is monic if it has sign $1$.  In exactly the same way we define $\sgn : K^{\times}_\infty \to \F_q^{\times}$ and monic elements of $A$:
\[
  A_+ = \{ a \in A \mid \sgn(a) = 1 \}, \quad
  A_{i+} = \{ a \in A_+ \mid \deg a = i \}, \ i \geq 0.
\]
Finally, if $L/\F_q$ is any field extension, then $L[t,y] = L \otimes_{\F_q} \bA$ is the coordinate ring of $E$ over~$L$, and we can define a group homomorphism
\[
  \tsgn : L(t,y)^{\times} \to L^{\times},
\]
which extends $\sgn$ on $\bK$, by setting $\tsgn(g) \in L^{\times}$ to be the leading coefficient of non-zero $g \in L[t,y]$, with respect to $t$ and $y$, and then extending to quotients.

Let $L/\F_q$ be an extension of fields, with $L$ algebraically closed.  Let $\tau : L \to L$ be the $q$-th power Frobenius, and let $L[\tau]$ be the ring of twisted polynomials in $\tau$, subject to the relation $\tau c = c^q \tau$ for $c \in L$.  For $\Delta = \sum c_i \tau^i \in L[\tau]$, we set $\pd(\Delta) = c_0$ to be its constant term.

We define the Frobenius twisting automorphism of $L[t,y]$ by
\[
  g \mapsto g^{(1)} : \sum_{i,j} c_{ij} t^i y^j \mapsto \sum_{i,j} c_{ij}^{q} t^i y^j.
\]
As usual for $i \in \Z$ we set $g^{(i)}$ to be the $i$-th iterate of $g \mapsto g^{(1)}$.
As an automorphism of $L(t,y)$, Frobenius twisting has fixed field $\bK$.  We also extend $L[\tau]$ to a ring of operators $L(t,y)[\tau]$, which for $g \in L(t,y)$ we have $\tau g = g^{(1)} \tau$.  In this way $L(t,y)$ becomes a left $L(t,y)[\tau]$-module by setting for $\Delta = \sum_i g_i \tau^i$,
\[
  \Delta(h) = \sum_i g_i h^{(i)}.
\]
If $X$ is a point on $E$, then we let $X^{(i)}=\Fr^i(X)$ where $\Fr: E \to E$ is the $q$-th power Frobenius isogeny, and we can then extend Frobenius twisting to divisors.  We see then that
\[
  \divisor\bigl( g^{(i)} \bigr) = \bigl( \divisor(g) \bigr)^{(i)}, \quad g \in L(t,y),\ i \in \Z.
\]
We will make frequent and often implicit use of the fact that a divisor on $E$ is principal if and only if the sum of the divisor is trivial on $E$~\cite[Cor.~III.3.5]{Silverman}.

We consider two Tate algebras,
\begin{equation} \label{Tatealgs}
  \TT = \biggl\{ \sum_{n=0}^\infty c_n t^n \in \power{\C_\infty}{t} \biggm| |c_n| \to 0 \biggr\}, \quad
  \TT_\theta = \biggl\{ \sum_{n=0}^\infty c_n t^n \in \power{\C_\infty}{t} \biggm| \big\lvert \theta^n c_n \big\rvert \to 0 \biggr\},
\end{equation}
where $\TT$ consists of functions that converge on the closed unit disk of $\C_\infty$ and likewise functions in $\TT_\theta$ converge on the closed disk of radius $|\theta|$.  We have natural embeddings $\bA \hookrightarrow \TT_\theta[y] \hookrightarrow \TT[y]$, where the variables $t$ and $y$ satisfy equation \eqref{Eeq}.   The ring $\TT$ is complete with respect to the Gauss norm $\lVert\, \cdot\, \rVert$, where for $g = \sum c_n t^n \in \TT$,
\[
  \lVert g \rVert = \max_n \bigl( \lvert c_n \rvert \bigr).
\]
The ring $\TT[y]$ is complete under the extension of $\lVert\, \cdot\, \rVert$ defined by $\lVert g + hy \rVert = \max(\lVert g \rVert, \lVert h \rVert)$ for $g$, $h \in \TT$.  In the sense of~\cite[Chs.~3--4]{FresnelvdPut}, the rings $\TT[y]$ and $\TT_\theta[y]$ are affinoid algebras corresponding to rigid analytic affinoid subspaces of $E/\C_\infty$.  In the spirit of Sinha~\cite[\S 4.2]{Sinha97} (see also \cite[\S 4.2]{BP02}), if we let $\cE$ denote the rigid analytic variety associated to $E$ and we let $\cU \subseteq \cE$ be the inverse image under $t$ of the closed disk in $\C_\infty$ of radius $|\theta|$ centered at~$0$, then~$\cU$ is the affinoid subvariety of $\cE$ associated to $\TT_\theta[y]$.  Frobenius twisting extends to both $\TT$ and $\TT[y]$, and their fraction fields.  The rings $\TT$ and $\TT[y]$ have $\F_q[t]$ and $\bA$ as their respective fixed rings under twisting (cf.~\cite[Lem.~3.3.2]{P08}).

\section{Drinfeld modules, $\bA$-motives, and dual $\bA$-motives}\label{S:DrinfeldModules}

Let $L/K$ be a field extension.  A Drinfeld $\bA$-module, or simply Drinfeld module, over $L$ is an $\F_q$-algebra homomorphism
\[
  \rho : \bA \to L[\tau],
\]
such that for all $a \in \bA$,
\[
  \rho_a = \iota(a) + b_1 \tau + \dots + b_n \tau^n.
\]
That is, we require $\pd(\rho_a) = \iota(a)$.
The rank $r$ of $\rho$ is the unique integer such that $n = r \deg a$ for all $a$.  For more information about Drinfeld modules, see \cite[Ch.~4]{Goss}, \cite[Ch.~2]{Thakur}.

Our main objects of study are Drinfeld modules of rank~$1$ that are sign normalized in the sense of Hayes~\cite{Hayes79} (or see \cite[\S 7.2]{Goss}), which we will call Drinfeld-Hayes modules.  That is,
\[
  \rho_a = \iota(a) + b_1 \tau + \dots + \sgn(a) \tau^{\deg a}.
\]
The construction we will use is due to Thakur~\cite{Thakur93} by way of the shtuka function, which we now define (see also \cite[\S 7.11]{Goss}, \cite[\S 7.7]{Thakur}).

The isogeny $1 - \Fr : E \to E$ is separable and $(1-\Fr)^* \lambda = \lambda$, and furthermore, it induces an isomorphism of formal groups $1-\Fr: \Ehat(\fM_\infty) \iso \Ehat(\fM_\infty)$ (see \cite[Cor.~III.5.5, Cor.~IV.4.3]{Silverman}).  Therefore, we can pick a unique point $V \in \Ehat(\fM_\infty)$ so that
\begin{equation}\label{Vdef}
  (1-\Fr)(V) = V - V^{(1)} = \Xi,
\end{equation}
and moreover, $(1-\Fr)^{-1}(\Xi) = \{ V + P \mid P \in E(\F_q) \}$.  If we set
\begin{equation}
  V = (\alpha,\beta),
\end{equation}
then $\deg(\alpha) = \deg(\theta) = 2$ and $\deg(\beta) = \deg(\eta) = 3$.  In order to determine the signs of the coordinates of $V$ we use the coordinate $z = -t/y$ on the formal group $\Ehat(\fM_\infty)$, and in terms of $z$ we have~\cite[Cor.~III.5.5, Cor.~IV.4.3]{Silverman}
\[
(1-\Fr)(z) = z + O(z^2) \in \power{\F_q}{z},
\]
and thus
\[
(1-\Fr)(z(V)) = z(V) + O(z(V)^2) = z(\Xi).
\]
Therefore, $\sgn(z(V))=-1$, since $\sgn(z(\Xi)) = -1$ and $\deg(z(V)^2) < \deg(z(V))$.  Then, switching back to $t$ and $y$ coordinates~\cite[\S IV.1]{Silverman}, we find that
\begin{equation} \label{sgnalphabeta}
\sgn(\alpha) = \sgn\left (\frac{1}{z(V)^2}\right ) = 1, \quad \sgn(\beta) =  \sgn\left (\frac{-1}{z(V)^3}\right ) = 1.
\end{equation}

Define $H = K(\alpha,\beta)$.  Then there is a unique function $f \in H(t,y)$ with $\tsgn(f)=1$ so that
\begin{equation}\label{divf}
  \divisor(f) = (V^{(1)}) - (V) + (\Xi) - (\infty).
\end{equation}
The function $f$ is called the shtuka function for $\bA$.  The points $V^{(1)}$, $-V$, and $\Xi$ are collinear, and we take $m$ to be the slope of the line connecting them:
\begin{equation} \label{mquotients}
 m = \frac{\eta-\beta^q}{\theta - \alpha^q} = \frac{\eta + \beta + a_1\alpha + a_3}{\theta - \alpha}
 = \frac{\beta^q + \beta + a_1\alpha + a_3}{\alpha^q - \alpha}.
\end{equation}
We then write
\begin{equation} \label{fdef}
  f = \frac{\nu(t,y)}{\delta(t)} = \frac{y - \eta - m(t-\theta)}{t-\alpha} = \frac{y + \beta + a_1\alpha + a_3 - m(t-\alpha)}{t-\alpha},
\end{equation}
where we see that
\begin{gather} \label{nudiv}
\divisor(\nu) = (V\twist) + (-V) + (\Xi) - 3(\infty), \\
\label{deltadiv}
\divisor(\delta) = (V) + (-V) - 2(\infty).
\end{gather}
This construction of the shtuka function $f$ is originally due to Thakur~\cite{Thakur93}, and the reader is directed to~\cite[\S 8.2]{Thakur} for an expanded treatment.

As in \cite{Thakur93}, we now define an $\bA$-motive $M$, which we will use to define a Drinfeld-Hayes module $\rho$.  For our purposes $M$ will simply be a module over a particular non-commutative ring, and we will not need the general theory of $\bA$-motives (see \cite{And86}, \cite{HartlJuschka16} for additional properties of $\bA$-motives).  Let $L/K$ be an algebraically closed field, and let $U = \Spec L[t,y]$ be the affine curve $(L \times_{\F_q} E) \setminus \{ \infty \}$.  We let
\[
  M = \Gamma(U, \cO_{E}(V))  = \bigcup_{i \geq 0} \cL((V) + i(\infty)),
\]
where $\cL((V) + i(\infty))$ is the $L$-vector space of functions $g$ on $E$ with $\divisor(g) \geq -(V) - i(\infty)$.  We see easily that
\[
  \divisor(f f^{(1)} \cdots f^{(i-1)} ) = (V^{(i)}) - (V) + (\Xi) + (\Xi^{(1)}) + \dots +
  (\Xi^{(i-1)}) - i(\infty),
\]
and so by the Riemann-Roch theorem
\[
  \cL((V) + i(\infty)) = \Span_L \bigl( 1, f, f f^{(1)}, \dots, f f^{(1)} \cdots f^{(i-1)} \bigr).
\]
We make $M$ into a left $L[t,y,\tau]$-module by setting
\[
  \tau g = f g^{(1)}, \quad g \in M,
\]
and we find that $M$ is a projective $L[t,y]$-module of rank~$1$ as well as a free $L[\tau]$-module of rank~$1$ with basis~$\{ 1 \}$.  For $a(t,y) \in \bA$, $\deg a = i$, we see that $a \in \cL((V) + i(\infty))$ and that we have an expression
\begin{equation} \label{ashtukadecomp}
  a(t,y) = b_0 + b_1 f + \dots + b_i f f^{(1)} \cdots f^{(i-1)}, \quad b_j \in L,\ b_i \neq 0.
\end{equation}
The coefficient $b_0$ equals $a(\theta,\eta)= a(\Xi)$, since the basis elements $f, \dots, ff^{(1)} \cdots f^{(i-1)}$ all vanish at $\Xi$.  Using this construction we thus define a function
\[
  \rho : \bA \to L[\tau], \quad a(t,y) \mapsto a(\theta,\eta) + b_1 \tau + \dots + b_i\tau^i.
\]

\begin{proposition}[{Thakur \cite[\S 0.3.5]{Thakur93}}]
The function $\rho : \bA \to L[\tau]$ is a Drinfeld-Hayes module defined over $H = K(\alpha,\beta)$.
\end{proposition}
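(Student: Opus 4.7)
My plan is to exploit the fact that $M$ is free of rank one as a left $L[\tau]$-module with basis $\{1\}$, which gives an $L[\tau]$-module isomorphism $L[\tau] \iso M$, $\Delta \mapsto \Delta \cdot 1$. For $a \in \bA$, the element $a \cdot 1 \in M$ arising from the $L[t,y]$-module structure has a unique preimage $\rho_a \in L[\tau]$. Under the identification $\tau^j \cdot 1 = f f^{(1)} \cdots f^{(j-1)}$, the decomposition \eqref{ashtukadecomp} translates at once to $\rho_a = b_0 + b_1\tau + \dots + b_i\tau^i$ with $i = \deg a$. That $\rho$ is an $\F_q$-algebra homomorphism will then be a formal consequence of the $L[t,y,\tau]$-module structure: additivity and $\F_q$-linearity are immediate. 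For multiplicativity, I would observe that every $a \in \bA \subset \F_q[t,y]$ is fixed by Frobenius twisting, so $\tau a = a^{(1)}\tau = a\tau$ in $L[t,y,\tau]$; together with commutativity of $L[t,y]$, this puts $\bA$ in the center of $L[t,y,\tau]$. Hence
\[
  \rho_{ab} \cdot 1 = a \cdot (\rho_b \cdot 1) = \rho_b \cdot (a \cdot 1) = \rho_b\rho_a \cdot 1,
\]
which combined with commutativity of $\bA$ yields $\rho_{ab} = \rho_a \rho_b$.

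The substance is showing the normalization $\rho_a = \iota(a) + b_1\tau + \dots + \sgn(a)\tau^{\deg a}$. The constant term is already in hand, since the text shows $b_0 = a(\Xi) = \iota(a)$ because every higher basis element $f f^{(1)} \cdots f^{(j-1)}$ ($j \geq 1$) vanishes at $\Xi$. For the top coefficient I would analyze the behavior at $\infty$. By the divisor of $f$, it has a simple pole at $\infty$ with $\tsgn(f) = 1$; Frobenius twisting preserves sign because the leading coefficient is sent to $1^q = 1$, so each $f^{(k)}$ has a simple pole at $\infty$ with $\tsgn(f^{(k)}) = 1$. Consequently $f f^{(1)} \cdots f^{(j-1)}$ has pole of order exactly $j$ at $\infty$ and $\tsgn = 1$, and only the top summand $b_i f f^{(1)} \cdots f^{(i-1)}$ in \eqref{ashtukadecomp} has maximal pole order $i = \deg a$. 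Comparing leading terms at $\infty$ of both sides of \eqref{ashtukadecomp} then forces $b_i = \sgn(a)$. The defined-over-$H$ assertion I would deduce from uniqueness of the decomposition: since $a \in H[t,y]$ and $f \in H(t,y)$, the coefficients $b_j$ may be extracted recursively inside $H(t,y)$ by peeling off leading poles at $\infty$, so each $b_j \in H$ and $\rho_a \in H[\tau]$.

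The main obstacle I anticipate is this sign-normalization. A priori the sign $\sgn(a)$ coming from the monomial $\F_q$-basis \eqref{AFqbasis} of $\bA$ has no obvious relation to the leading coefficient $b_i$ arising from the shtuka basis $(1, f, f f^{(1)}, \dots)$ of $\cL((V)+i(\infty))$. Bridging the two requires that both bases be compatible with the pole filtration at $\infty$, together with the multiplicativity of $\tsgn$ and its invariance under Frobenius twisting. Once these ingredients are in place, all remaining verifications are bookkeeping inside $L[t,y,\tau]$.
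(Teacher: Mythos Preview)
Your proposal is correct and, for the parts the paper actually argues (sign normalization and $H$-rationality), it matches the paper's proof exactly: the paper observes $\sgn(a) = \tsgn(b_i f f^{(1)} \cdots f^{(i-1)}) = b_i$ since each $f^{(k)}$ has $\tsgn = 1$, and deduces $b_j \in H$ from $f \in H(t,y)$. Where you differ is that the paper does not verify the $\F_q$-algebra homomorphism property directly; it simply cites Thakur, Drinfeld, and Mumford, and later mentions the functorial description $\rho(L) \cong \Ext^1_{L[t,y,\tau]}(M,L[t,y])$. Your centrality argument---that $\bA$ lies in the center of $L[t,y,\tau]$ because $a^{(1)} = a$, hence $\rho_{ab}\cdot 1 = \rho_b\rho_a \cdot 1$ via the free rank-one $L[\tau]$-structure of $M$---is a clean, self-contained substitute for those references and is the standard way to extract $\rho$ from $M$ without homological machinery.
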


Thakur notes that this construction is a special case of more general results of Drinfeld on shtukas (see \cite[Ch.~6]{Goss}) and can be proved using methods of Mumford~\cite{Mumford78}.  To see that~$\rho$ is sign normalized, we observe by~\eqref{ashtukadecomp} that
\[
  \sgn(a(t,y)) = \tsgn( b_i f f^{(1)} \cdots f^{(i-1)}) = b_i,
\]
since each of $f, \dots, f^{(i-1)}$ has sign~$1$.  Since each of the functions $f, \ldots, f^{(i-1)}$ is defined over~$H$, as is $a(t,y)$, it follows that each of the coefficients $b_j \in H$.  To find $\rho$ functorially as an $\bA$-module from $M$, one can argue as in Sinha~\cite[\S 3.1.8]{Sinha97} that we have an isomorphism of $\bA$-modules,
\[
  \rho(L) \cong \Ext^1_{L[t,y,\tau]}(M,L[t,y]),
\]
where $\rho(L)$ is a copy of $L$ with $\bA$-module structure given by $\rho$.  We provide a more direct construction of $\rho$, due to Anderson, using dual $\bA$-motives later in this section.

The Drinfeld module $\rho$ is completely determined by $\rho_t$ and $\rho_y$, which we denote as
\begin{align}\label{taction}
\rho_t &= \theta + x_1 \tau + \tau^2, &
t &= \theta + x_1 f + f f^{(1)},\\
\label{yaction}
\rho_y &= \eta + y_1\tau + y_2\tau^2 + \tau^3, &
y &= \eta + y_1 f + y_2 f f^{(1)} + f f^{(1)} f^{(2)}.
\end{align}
The values of $x_1$, $y_1$, $y_2$ are determined by the relation $\rho_t \rho_y = \rho_y\rho_t$, and in fact $y_1$ and $y_2$ are determined by $x_1$ alone, as we see in the following theorem.

\begin{theorem}[{Dummit-Hayes~\cite[Thm.~2]{DummitHayes94}, \cite[\S 14--15]{Hayes92}}]
\label{T:DummitHayes}
The coefficients $x_1$, $y_1$, $y_2$ satisfy the equations
\begin{align*}
\theta y_1 + x_1 \eta^q &= x_1\eta + y_1\theta^q, &
\theta y_2 + x_1y_1^q + \eta^{q^2} &= \eta+ y_1x_1^q + y_2 \theta^{q^2}, \\
\theta + x_1 y_2^q + y_1^{q^2} &= y_1 + y_2 x_1^{q^2} + \theta^{q^3}, &
x_1 + y_2^{q^2} &= y_2 + x_1^{q^3}.
\end{align*}
We have $K(x_1) = K(x_1,y_1,y_2)$, and $K(x_1)$ is the Hilbert class field of $K$.  That is, $K(x_1)$ is the maximal unramified abelian extension of $K$ in which $\infty$ splits completely.  Moreover, $x_1$, $y_1$, $y_2$ are all integral over $A$.
\end{theorem}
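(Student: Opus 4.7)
\emph{Plan.} The four displayed equations will come from expanding the commutation relation $\rho_t\rho_y = \rho_y\rho_t$ in $H[\tau]$, using the rule $\tau c = c^q\tau$. Since $\rho_t$ has $\tau$-degree $2$ and $\rho_y$ has $\tau$-degree $3$, both products are polynomials in $\tau$ of degree $5$; the coefficients of $\tau^0$ and $\tau^5$ match tautologically (giving $\theta\eta=\eta\theta$ and $1=1$), while matching the coefficients of $\tau^1,\tau^2,\tau^3,\tau^4$ yields exactly the four identities claimed. The relation $\rho_t\rho_y=\rho_y\rho_t$ holds because $\rho$ is a ring homomorphism out of the commutative ring $\bA$ and $ty=yt$ in $\bA$.

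\emph{From $x_1$ to $y_1$ and $y_2$.} The first equation rearranges to $(\theta-\theta^q)y_1 = (\eta-\eta^q)x_1$, so
\[
y_1 = x_1\cdot\frac{\eta-\eta^q}{\theta-\theta^q} \in K(x_1).
\]
Substituting this into the second equation gives $(\theta-\theta^{q^2})y_2 = \eta-\eta^{q^2}+y_1 x_1^q - x_1 y_1^q$, so $y_2\in K(x_1,y_1) = K(x_1)$ as well. Thus $K(x_1) = K(x_1,y_1,y_2)$, and the field of definition of $\rho$ (regarded as a subring of $H[\tau]$) is contained in $K(x_1)\subseteq H$. The third and fourth equations then become automatic consistency checks, which will hold because $\rho_t\rho_y=\rho_y\rho_t$ is a single genuine identity in $H[\tau]$.

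\emph{Identification with the Hilbert class field.} For the assertion that $K(x_1)$ is the Hilbert class field of $K$, I would appeal directly to Hayes's theory~\cite{Hayes79},~\cite{Hayes92}: by construction $\rho$ is a sign-normalized rank $1$ Drinfeld $\bA$-module, and Hayes proves that any such module has coefficient field equal to the maximal unramified abelian extension of $K$ in which $\infty$ splits completely. Since $H=K(\alpha,\beta)$ contains this coefficient field $K(x_1)$ by the construction via the shtuka function, and since Hayes's theorem forces equality, we obtain $H = K(x_1)$.

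\emph{Integrality.} The final assertion, that $x_1,y_1,y_2$ are integral over $A$, is again a special case of Hayes's integrality theorem for sign-normalized Drinfeld modules: the coefficients of $\rho_a$ are integral over $A$ for every $a\in\bA$, which rests on the fact that a rank~$1$ Drinfeld module has everywhere (potentially) good reduction. The main obstacle in the proof is the Hilbert class field identification, which genuinely requires the global class field theory input of Hayes rather than formal manipulation; the remaining assertions reduce to the single commutation $\rho_t\rho_y=\rho_y\rho_t$ together with elementary linear algebra in $K(x_1)$.
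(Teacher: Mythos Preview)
Your proof is correct and follows essentially the same approach as the paper: derive the four equations by matching coefficients of $\tau,\tau^2,\tau^3,\tau^4$ in $\rho_t\rho_y=\rho_y\rho_t$, solve the first two for $y_1,y_2$ in terms of $x_1$ to get $K(x_1)=K(x_1,y_1,y_2)$, and then invoke Hayes for both the Hilbert class field identification and the integrality.

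One small point: your final sentence ``we obtain $H=K(x_1)$'' goes beyond what this theorem asserts and is not actually justified by your argument. Hayes's theorem gives $K(x_1)=$ Hilbert class field, and the shtuka construction gives $K(x_1)\subseteq H$, but nothing here bounds $H$ from above. In the paper this equality $H=K(\alpha,\beta)=K(x_1)$ is established separately (Proposition~\ref{P:mx1}) by explicitly expressing $m$ and $\alpha$ as elements of $K(x_1)$ via the shtuka decomposition, so you should drop that claim from the present proof.
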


\begin{proof}
The four equations arise by equating coefficients of powers of $\tau$ in $\rho_t \rho_y=\rho_y\rho_t$.  The first two equations yield
\[
  y_1 = \frac{x_1(\eta^q-\eta)}{\theta^q-\theta}, \quad
  y_2 = \frac{\eta^{q^2} - \eta + x_1 y_1^q - y_1x_1^q}{\theta^{q^2}-\theta},
\]
and so $y_1$ and $y_2$ can be determined successively from $x_1$.  Since $\rho$ is a sign normalized module, it follows from Hayes~\cite[Thm.~15.6]{Hayes92} that the coefficients of $\rho$ generate the Hilbert class field of $K$ and are integral over $A$~\cite[\S 14]{Hayes79}.
\end{proof}

As $H \supseteq K(x_1)$, we see that $H$ contains the Hilbert class field.  In fact they are equal.

\begin{proposition} \label{P:mx1}
With notation as above,
\begin{align}
m &= y_2 - x_1^q, \label{my2x1q} \\
x_1 &= m + m^q + a_1. \label{x1mmq}
\end{align}
Moreover, $H = K(\alpha,\beta) = K(x_1)$, and so $H$ is the Hilbert class field of $K$.
\end{proposition}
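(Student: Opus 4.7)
The plan is to extract the identities~\eqref{my2x1q} and~\eqref{x1mmq} by expanding \eqref{taction} and \eqref{yaction} in the local uniformizer $z = -t/y$ at~$\infty$, and then to deduce $H = K(x_1)$ from Galois theory of the isogeny $1 - \Fr$.

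First, I would record the formal-group expansions $t = z^{-2} - a_1 z^{-1} + O(1)$ and $y = -z^{-3} + a_1 z^{-2} + a_2 z^{-1} + O(1)$ at~$\infty$, as in \cite[\S IV.1]{Silverman}. Since $f = \nu/\delta$ with $\tsgn(f) = 1$, the leading behavior of $f$ at $\infty$ is $y/t \sim -z^{-1}$. Writing $f = -z^{-1} + c_0 + O(z)$, substituting into the identity $f \cdot (t - \alpha) = y - \eta - m(t - \theta)$ from~\eqref{fdef}, and comparing $z^{-2}$-coefficients, one obtains $c_0 = -m$, so
\[
f = -z^{-1} - m + O(z).
\]
Frobenius twisting acts on the coefficients of this $z$-expansion, so $f^{(i)} = -z^{-1} - m^{q^i} + O(z)$ for each $i \geq 0$.

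Next, I would substitute these expansions into \eqref{taction} and \eqref{yaction} and match coefficients. The $z^{-1}$-coefficient of $x_1 f + f f^{(1)}$ equals $-x_1 + (m + m^q)$, while the $z^{-1}$-coefficient of $t - \theta$ is $-a_1$; equating yields \eqref{x1mmq}. A similar computation shows that the $z^{-2}$-coefficient of $y_1 f + y_2 f f^{(1)} + f f^{(1)} f^{(2)}$ is $y_2 - (m + m^q + m^{q^2})$, while the $z^{-2}$-coefficient of $y - \eta$ is $a_1$, giving $y_2 = a_1 + m + m^q + m^{q^2}$. Raising \eqref{x1mmq} to the $q$-th power and subtracting (using $a_1 \in \F_q$) then yields $y_2 - x_1^q = m$, which is \eqref{my2x1q}.

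Finally, for $H = K(x_1)$: the slope formula~\eqref{mquotients} puts $m$ in $K(\alpha, \beta) = H$, so \eqref{x1mmq} shows $K(x_1) \subseteq H$. For the reverse, any $\sigma \in \Gal(\oK/K)$ satisfies $(1 - \Fr)(\sigma V) = \sigma \Xi = \Xi$, so $\sigma V \in (1 - \Fr)^{-1}(\Xi) = V + E(\F_q)$, and the Galois orbit of $V$ has size at most $|E(\F_q)|$. Since $\Spec \bA = E \setminus \{\infty\}$ gives $\Cl(\bA) \cong E(\F_q)$, we have $|E(\F_q)| = h(\bA) = [K(x_1):K]$ by Theorem~\ref{T:DummitHayes}, and thus $[H:K] \leq [K(x_1):K]$. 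Equality of degrees forces $H = K(x_1)$. The main technical step is the careful $z$-adic computation of $f$; the rest follows from bookkeeping and standard Galois theory.
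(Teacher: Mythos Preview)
Your proof is correct but follows a genuinely different route from the paper's.  For the two identities, the paper first derives~\eqref{my2x1q} by forming the combination $g_y - f g_t^{(1)} - (y_2 - x_1^q) g_t = 0$ of the relations \eqref{taction}--\eqref{yaction}, obtaining an alternative closed form for $f$ (their~\eqref{fmotive}) and then matching it against~\eqref{fdef}; it then gets~\eqref{x1mmq} from the $\tau^5$-coefficient of the Weierstrass relation $\rho_{y^2 + a_1 t y + a_3 y} = \rho_{t^3 + a_2 t^2 + a_4 t + a_6}$ together with the last equation of Theorem~\ref{T:DummitHayes}.  Your $z$-adic expansion at~$\infty$ is a more uniform device: it produces~\eqref{x1mmq} first and then~\eqref{my2x1q} from the same expansion, with no need to invoke the Weierstrass relation for $\rho$.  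On the other hand, the paper's manipulation also yields the explicit formula $\alpha = \theta^q - y_1 + x_1(y_2 - x_1^q)$ in~\eqref{mt0motive}, which your approach does not see.  For $H = K(x_1)$, the paper gives a constructive argument: from~\eqref{mt0motive} and Theorem~\ref{T:DummitHayes} one has $m, \alpha \in K(x_1)$, and then $\beta \in K(x_1)$ via~\eqref{mquotients}.  Your degree-counting argument via the Galois orbit of $V$ inside $V + E(\F_q)$ is cleaner and more conceptual, but it leans on the identification $\Cl(A) \cong E(\F_q)$ and on Theorem~\ref{T:DummitHayes} for $[K(x_1):K] = h(A)$, whereas the paper's argument exhibits $\alpha$ and $\beta$ explicitly as elements of $K(x_1)$.
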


\begin{proof}
We note from the definitions of $\rho_t$ and $\rho_y$ that
\[
  0 = \theta - t + x_1 f + f f^{(1)}, \quad
  0 = \eta - y + y_1 f + y_2 f f^{(1)} + f f^{(1)} f^{(2)}.
\]
If we let $g_t$ and $g_y$ denote the right-hand sides of these two equations, then
\[
  0 = g_y - f g_t^{(1)} - (y_2-x_1^q) g_t
   = \eta - y - (y_2 - x_1^q) (\theta-t) + (y_1 + t - \theta^q - x_1(y_2-x_1^q))f.
\]
Thus
\begin{equation} \label{fmotive}
f = \frac{y - \eta - (y_2-x_1^q)(t-\theta)}{t - \theta^q + y_1 - x_1(y_2-x_1^q)}.
\end{equation}
If we compare with \eqref{fdef}, we see that
\begin{equation} \label{mt0motive}
  m = y_2-x_1^q, \quad
  \alpha = \theta^q - y_1 + x_1(y_2-x_1^q).
\end{equation}
This proves \eqref{my2x1q}.  Moreover, we see that $m$, $\alpha \in K(x_1)$ by Theorem~\ref{T:DummitHayes}.  By \eqref{mquotients}, we see that $\beta$ is expressible in terms of $m$ and $\alpha$, so $\beta \in K(x_1)$.  This proves the last part of the statement.  To prove \eqref{x1mmq} we observe that if we expand
\[
  \rho_{y^2 + a_1ty + a_3y} - \rho_{t^3 + a_2t^2 + a_4t + a_6} = 0,
\]
then the coefficient of $\tau^5$ yields
\begin{equation}
  y_2^{q^3} + y_2 + a_1 - x_1^{q^4} - x_1^{q^2} - x_1 = 0.
\end{equation}
If we raise the last equation of Theorem~\ref{T:DummitHayes} to the $q$-th power and subtract, we find
\[
  x_1 = y_2 - x_1^q + y_2^q - x_1^{q^2} + a_1 = m + m^q + a_1,
\]
which completes the proof.
\end{proof}

As noted by Anderson and Thakur~\cite[\S 0.3]{Thakur93}, the exponential and logarithm functions of~$\rho$ are determined by the shtuka function~$f$.  We specialize the above discussion to the case $L = \C_\infty$.  The exponential function is the unique $\F_q$-linear power series, which we denote with reciprocal coefficients (which are nonzero by Theorem \ref{T:explog}) as
\[
  \exp_\rho(z) = \sum_{i=0}^\infty \frac{z^{q^i}}{d_i} \in \power{H}{z}, \quad d_0 = 1,
\]
satisfying the functional equation
\begin{equation}\label{expfunctionalequation}
\exp_\rho(\iota(a)z) = \rho_a(\exp_\rho(z)), \quad a \in \bA.
\end{equation}
Then $\exp_\rho : \C_\infty \to \C_\infty$ is entire and surjective, and we denote its kernel by $\Lambda_\rho$, which is a discrete $A$-submodule of $\C_\infty$ of projective rank~$1$ (see \cite[Ch.~4]{Goss}).  The logarithm
\[
  \log_\rho(z) = \sum_{i=0}^\infty \frac{z^{q^i}}{\ell_i} \in \power{H}{z}, \quad \ell_0=1,
\]
is the formal inverse of $\exp_\rho$, and it converges on a disk of finite radius in $\C_\infty$.  Using the functional equation of $\exp_\rho$, we see that $\log_\rho$ must satisfy,
\begin{equation}\label{logfunctionalequation}
  \iota(a)\log_\rho(z) = \log_\rho(\rho_a(z)).
\end{equation}
The quantities $d_i$ and $\ell_i$ are indeed non-zero, as we see in the following theorem and its corollary.

\begin{theorem} \label{T:explog}
The functions $\exp_\rho(z)$ and $\log_\rho(z)$ are given by the following formulas.
\begin{enumerate}
\item[(a)] \textup{(Thakur~\cite[Prop.~0.3.6]{Thakur93})}
\[
  \exp_\rho(z) = \sum_{i=0}^\infty \frac{z^{q^i}}{(f f^{(1)} \cdots f^{(i-1)})|_{\Xi^{(i)}}}.
\]
\item[(b)] \textup{(Anderson~\cite[Prop.~0.3.8]{Thakur93})} Let $\tlambda \in \Omega^1_{E/H}(-(V) + 2(\infty))$ be the unique differential $1$-form such that we have residue $\Res_{\Xi}(\tlambda^{(1)} / f) = 1$.  Then
\[
  \log_\rho(z) = \sum_{i=0}^\infty \Res_{\Xi} \biggl( \frac{ \tlambda^{(i+1)}}{f f^{(1)} \cdots f^{(i)}} \biggr) z^{q^i}.
\]
\end{enumerate}
\end{theorem}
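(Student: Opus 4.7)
For part (a), set $d_i^\ast = (ff^{(1)}\cdots f^{(i-1)})|_{\Xi^{(i)}}$ for $i \geq 1$ with $d_0^\ast = 1$; the formula asserts $d_i = d_i^\ast$. Matching coefficients of $z^{q^i}$ in the functional equation $\exp_\rho(\theta z) = \rho_t(\exp_\rho(z))$ gives the recursion $(\theta^{q^i} - \theta)/d_i = x_1/d_{i-1}^q + 1/d_{i-2}^{q^2}$ for $i \geq 2$, with base case $(\theta^q - \theta)/d_1 = x_1$, which uniquely determines the $d_i$ from $d_0 = 1$. My plan is to verify these identities with $d_i$ replaced by $d_i^\ast$. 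Evaluate the motivic identity $t = \theta + x_1 f + ff^{(1)}$ from \eqref{taction} at $\Xi^{(i)}$: for $i = 1$ the vanishing $f^{(1)}(\Xi^{(1)}) = 0$ collapses it to $\theta^q - \theta = x_1 d_1^\ast$, and for $i \geq 2$ it yields $\theta^{q^i} - \theta = x_1 f(\Xi^{(i)}) + (ff^{(1)})|_{\Xi^{(i)}}$. The identities $f(\Xi^{(i)}) = d_i^\ast/(d_{i-1}^\ast)^q$ and $(ff^{(1)})|_{\Xi^{(i)}} = d_i^\ast/(d_{i-2}^\ast)^{q^2}$, both inherent to the product structure of $d_i^\ast$ under Frobenius twisting, deliver the recursion after dividing by $d_i^\ast$. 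The $y$-functional equation follows analogously from \eqref{yaction}.

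For part (b), set $\ell_i^\ast = \Res_\Xi(\tlambda^{(i+1)}/(ff^{(1)}\cdots f^{(i)}))$ and $L^\ast(z) = \sum_{i \geq 0} \ell_i^\ast z^{q^i}$. Since $\log_\rho$ is characterized as the formal compositional inverse of $\exp_\rho$, it suffices to prove $\exp_\rho(L^\ast(z)) = z$; comparing coefficients, this reduces to $\ell_0^\ast = 1$ (the normalization $\Res_\Xi(\tlambda^{(1)}/f) = 1$) and the recursion $\ell_n^\ast = -\sum_{j=1}^n (\ell_{n-j}^\ast)^{q^j}/d_j$ for $n \geq 1$. The plan is to establish this recursion by applying the residue theorem on $E$ to the differential $\omega_n = \tlambda^{(n+1)}/(ff^{(1)}\cdots f^{(n)})$. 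A divisor count, using $\divisor(\tlambda^{(n+1)}) \geq (V^{(n+1)}) - 2(\infty)$ and $\divisor(f^{(j)}) = (V^{(j+1)}) - (V^{(j)}) + (\Xi^{(j)}) - (\infty)$, shows $\omega_n$ is regular at $\infty$ (order $\geq n - 1$) and at each of $V^{(1)}, \ldots, V^{(n+1)}$, with simple poles only at $\Xi, \Xi^{(1)}, \ldots, \Xi^{(n)}$. The residue at $\Xi$ equals $\ell_n^\ast$ by definition, while for $j \geq 1$ the factorization
\[
\omega_n = \frac{1}{ff^{(1)}\cdots f^{(j-1)}} \cdot \biggl( \frac{\tlambda^{(n+1-j)}}{ff^{(1)}\cdots f^{(n-j)}} \biggr)^{(j)}
\]
combined with the general identity $\Res_{P^{(j)}}(\eta^{(j)}) = \Res_P(\eta)^{q^j}$ yields $\Res_{\Xi^{(j)}}(\omega_n) = (\ell_{n-j}^\ast)^{q^j}/d_j$. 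Summing these residues to zero produces the desired recursion.

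The main technical obstacle is the Frobenius-twist bookkeeping for residues: the $q^j$-th powers attached to $\ell_{n-j}^\ast$ are forced by the behavior of residues under twisting, and they align precisely with the composition identity $\exp_\rho \circ L^\ast = \mathrm{id}$ rather than $L^\ast \circ \exp_\rho = \mathrm{id}$ (the two are equivalent for series with invertible leading term, but the residue calculation naturally produces the former). A secondary care point is confirming that $\omega_n$ has no residue at infinity for $n \geq 1$, which rests on the sharp orders $\ord_\infty(\tlambda^{(n+1)}) \geq -2$ and $\ord_\infty(f^{(j)}) = -1$, giving $\ord_\infty(\omega_n) \geq n - 1 \geq 0$.
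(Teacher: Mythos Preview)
The paper does not supply its own proof of this theorem; both parts are stated as results of Thakur and Anderson with citations to \cite{Thakur93}, and the paper proceeds directly to Corollary~\ref{C:logcoef}. So there is nothing to compare against except the original sources, which your argument essentially reconstructs.

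Your proof is correct. For part~(a), evaluating the motivic identity $t = \theta + x_1 f + ff^{(1)}$ at $\Xi^{(i)}$ and unwinding the product structure of $d_i^\ast$ under Frobenius is exactly Thakur's argument in \cite[Prop.~0.3.6]{Thakur93}. One small point you glide over: the recursion-matching step requires $d_i^\ast \neq 0$, i.e., that $\Xi^{(i)}$ avoids the zero loci of $f, f^{(1)}, \ldots, f^{(i-1)}$. This holds because $\Xi^{(i)} \neq \Xi^{(k)}$ for $k < i$ (transcendence of $\theta$) and $\Xi^{(i)} \neq V^{(k+1)}$ (a degree comparison forces $\alpha = \theta$, contradicting $V \neq \Xi$), but it deserves a sentence.

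For part~(b), your residue-theorem argument is clean and correct: the divisor computation shows $\omega_n$ is regular at $\infty$ for $n \geq 1$ with simple poles exactly at $\Xi, \ldots, \Xi^{(n)}$, and the factorization together with $\Res_{P^{(j)}}(\eta^{(j)}) = \Res_P(\eta)^{q^j}$ identifies each residue. Summing to zero yields precisely the composition identity $\exp_\rho \circ L^\ast = \mathrm{id}$. This is in the same spirit as Anderson's original argument, though your packaging via the global residue theorem on $E$ is arguably more transparent than tracking orders of vanishing directly. The remark that the $q^j$-powers naturally produce $\exp_\rho \circ L^\ast$ rather than $L^\ast \circ \exp_\rho$ is a nice observation.
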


Anderson's and Thakur's results work equally well for rings $\bA$ that arise from curves of higher genus.  In our genus~$1$ case, part~(b) of this theorem simplifies to a direct evaluation in terms of $\delta$ and $f$.

\begin{corollary}\label{C:logcoef}
The function $\log_\rho(z)$ has the expression
\[
  \log_\rho(z) = \sum_{i=0}^\infty \left ( \frac{ \delta^{(i+1)}}{\delta^{(1)} f^{(1)} \cdots f^{(i)}} \bigg|_{\Xi} \right ) z^{q^i}.
\]
\end{corollary}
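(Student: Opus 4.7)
The plan is to apply Theorem~\ref{T:explog}(b) after pinning down the differential $\tlambda$ explicitly. Since $\Omega^1_{E/H}$ is a trivial line bundle on the elliptic curve $E$, the sheaf $\Omega^1_{E/H}(-(V)+2(\infty))$ has degree~$1$, and Riemann-Roch gives a $1$-dimensional space of global sections. From \eqref{deltadiv}, $\divisor(\delta\lambda) = (V)+(-V)-2(\infty) \geq (V)-2(\infty)$, so $\delta\lambda$ generates this space, and we may write $\tlambda = c\,\delta\lambda$ for a unique $c \in H$.

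The crucial calculation is $\Res_{\Xi}(\lambda/\nu)$, which I would extract by exploiting the fact that $\nu=0$ cuts out on $E$ the line through the collinear points $\Xi$, $-V$, $V^{(1)}$. Substituting $y = \eta + m(t-\theta)$ into the Weierstrass equation~\eqref{Eeq} produces a cubic polynomial $P(t)$ whose roots are $\theta$, $\alpha$, $\alpha^q$, forcing $P(t) = -(t-\theta)(t-\alpha)(t-\alpha^q)$. Working in the uniformizer $u = t-\theta$ at $\Xi$ and using implicit differentiation of~\eqref{Eeq}, one expands $\nu = (y'(\Xi) - m)\,u + O(u^2)$. Matching the two expressions for $P'(\theta)$ yields $(2\eta+a_1\theta+a_3)(y'(\Xi)-m) = (\theta-\alpha)(\theta-\alpha^q)$, and hence
\[
\Res_{\Xi}\!\left(\frac{\lambda}{\nu}\right) = \frac{1}{(\theta-\alpha)(\theta-\alpha^q)} = \frac{1}{\delta(\Xi)\,\delta^{(1)}(\Xi)}.
\]

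With this identity the normalization of $\tlambda$ simplifies dramatically: since $\tlambda^{(1)}/f = c^q \delta\,\delta^{(1)}\lambda/\nu$ and $\delta$, $\delta^{(1)}$ are regular at $\Xi$,
\[
1 = \Res_{\Xi}\!\biggl(\frac{\tlambda^{(1)}}{f}\biggr) = c^q\,\delta(\Xi)\,\delta^{(1)}(\Xi)\,\Res_{\Xi}\!\left(\frac{\lambda}{\nu}\right) = c^q,
\]
forcing $c=1$, so $\tlambda = \delta\lambda$. Substituting into Theorem~\ref{T:explog}(b), the only pole of the integrand at $\Xi$ comes from the simple zero of $f$ there, since each $f^{(j)}$ with $j \geq 1$ is regular and nonzero at $\Xi$. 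Factoring those regular values out of the residue and using $\Res_{\Xi}(\lambda/f) = \delta(\Xi)\,\Res_{\Xi}(\lambda/\nu) = 1/\delta^{(1)}(\Xi)$ then yields the stated formula. The principal obstacle is the residue computation for $\lambda/\nu$; once that collinearity-based identity is in hand, the rest is bookkeeping.
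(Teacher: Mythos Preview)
Your argument is correct and reaches the same conclusion $\tlambda=\delta\lambda$ as the paper, but by a genuinely different route. The paper computes $\Res_\Xi(\delta^{(1)}\lambda/f)$ via the shtuka decomposition of $t$: from \eqref{taction} one has $(t-\theta)/f = x_1 + f^{(1)}$, and then the identity \eqref{alphaidentity}, which itself relies on the Drinfeld-module relations of Proposition~\ref{P:mx1}, reduces the residue to~$1$. Your proof instead never touches the coefficients $x_1,y_1,y_2$ of $\rho$: you use only the collinearity of $\Xi$, $-V$, $V^{(1)}$ on the line $\nu=0$ to factor the restricted Weierstrass cubic as $(t-\theta)(t-\alpha)(t-\alpha^q)$, and then compare $P'(\theta)$ computed two ways to obtain $(2\eta+a_1\theta+a_3)(y'(\Xi)-m)=\delta(\Xi)\,\delta^{(1)}(\Xi)$ directly. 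This is more elementary and self-contained---it makes the residue formula $\Res_\Xi(\lambda/\nu)=1/(\delta(\Xi)\,\delta^{(1)}(\Xi))$ a statement purely about the geometry of $E$, independent of the module $\rho$---whereas the paper's method has the advantage of reusing \eqref{x1ftwist} and \eqref{alphaidentity}, identities it needs again later (e.g., in Theorem~\ref{T:pirho}). One small point: your sign on $P(t)$ depends on which side of the Weierstrass equation you move over, but since you only use $P'(\theta)$ and the claimed identity $(2\eta+a_1\theta+a_3)(y'(\Xi)-m)=(\theta-\alpha)(\theta-\alpha^q)$ is correct, this does not affect the argument.
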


\begin{proof}
We claim that
\[
  \tlambda = \delta \lambda = \frac{\delta\,dt}{2y + a_1 t + a_3}.
\]
Certainly $\delta\lambda \in \Omega^1_{E/H}(-(V) + 2(\infty))$ by \eqref{deltadiv}.  From~\eqref{taction} we see that
\begin{equation} \label{x1ftwist}
  \frac{t-\theta}{f} = x_1 + f^{(1)},
\end{equation}
and using that $t-\theta$ is a uniformizer at $\Xi$, we calculate the residue
\begin{equation} \label{lambdares}
  \Res_{\Xi} \biggl( \frac{\delta^{(1)} \lambda^{(1)}}{f} \biggr)
  = \frac{\delta^{(1)} (t-\theta)}{f (2y + a_1 t + a_3)} \bigg|_{\Xi}
  = \frac{(\theta - \alpha^q) \cdot (x_1 + f^{(1)})|_{\Xi}}{2\eta + a_1 \theta + a_3}.
\end{equation}
Using \eqref{mquotients}, \eqref{fdef}, and \eqref{x1mmq}, a reasonably straightforward calculation yields
\begin{equation}\label{alphaidentity}
  (\theta - \alpha^q)(x_1 + f^{(1)}(\Xi)) = 2 \eta + a_1\theta + a_3,
\end{equation}
and so the residue in \eqref{lambdares} is~$1$ and $\tlambda = \delta \lambda$ as claimed.  Thus we see that for $i \geq 0$,
\[
\Res_{\Xi} \biggl( \frac{ \tlambda^{(i+1)}}{f f^{(1)} \cdots f^{(i)}} \biggr)
= \frac{(t-\theta)}{(2y + a_1t + a_3)f}\bigg|_{\Xi} \cdot \frac{\delta^{(i+1)}}{f^{(1)} \cdots f^{(i)}} \bigg|_{\Xi}
= \frac{1}{\theta - \alpha^q} \cdot \frac{\delta^{(i+1)}}{f^{(1)} \cdots f^{(i)}} \bigg|_{\Xi},
\]
and the result follows since $\delta^{(1)} = t - \alpha^q$.
\end{proof}

We record for later use formulas for $d_i$, $\ell_i$, derived from the preceding results:
\begin{align} \label{diformula}
  d_i &= f f^{(1)} \cdots f^{(i-1)} \big|_{\Xi^{(i)}}, \quad i \geq 1, \\
\label{liformula}
  \ell_i &= \frac{\delta^{(1)}}{\delta^{(i+1)}} \cdot f^{(1)} \cdots f^{(i)}\Big|_{\Xi}, \quad i \geq 1.
\end{align}
In~\cite[\S I]{Thakur92}, Thakur defines quantities $f_i$, $g_i$ by $f_0=g_0=1$ and
\[
  f_i = \frac{d_i}{d_{i-1}^q}, \quad g_i = \frac{\ell_i}{\ell_{i-1}}, \quad i \geq 1,
\]
and uses them to find formulas for reciprocal sums.  We will interpolate such reciprocal sum formulas in \S\ref{S:sums}, but we see right away that
\begin{equation} \label{figi}
  f_i = f \bigl(\Xi^{(i)} \bigr), \quad g_i = \frac{\delta^{(i)} f^{(i)}}{\delta^{(i+1)}} \bigg|_{\Xi}, \quad i \geq 1.
\end{equation}
Thakur~\cite[Thm.~V]{Thakur92} finds exact formulas for $f_i$, $g_i$, using different methods, but by comparison
using~\eqref{fmotive} we find that our formula for $f_i$ agrees with Thakur's.  Using the dual $\bA$-motive below we recover Thakur's formula for $g_i$ as well.

We now discuss the dual $\bA$-motive $N$ associated to $\rho$, which leads to additional identities for $\rho$ involving the shtuka function.  The notion of dual $t$-motives is due to Anderson and was introduced in \cite[\S 4]{ABP04}.  Further properties of dual $\bA$-motives, including Anderson's constructions for connecting them with Drinfeld modules and Anderson $\bA$-modules, can be found in \cite[\S 4--5]{HartlJuschka16}.  As before, we let $L/K$ be an algebraically closed field, and we let
\begin{equation} \label{dualt}
  N = \Gamma\bigl(U, \cO_E(-(V^{(1)}))\bigr) \subseteq L[t,y].
\end{equation}
{From} the Riemann-Roch theorem, we have an $L$-basis for $N$,
\begin{equation}\label{dualbasis}
N = \Span_L \bigl( \delta^{(1)}, \delta f, \delta^{(-1)}f f^{(-1)}, \delta^{(-2)} f f^{(-1)} f^{(-2)}, \ldots \bigr).
\end{equation}
If we let $\sigma = \tau^{-1}$, then we can define a left $L[t,y,\sigma]$-module structure on $N$ by setting
\[
  \sigma h = f h^{(-1)}.
\]
With this action $N$ is a dual $\bA$-motive in the sense of Anderson~\cite[\S 4]{HartlJuschka16}, and we note that $N$ is an ideal of $L[t,y]$ and that it is a free left $L[\sigma]$-module of rank~$1$ generated by $\delta^{(1)}$.

As in the situation of the $\bA$-motive, we can use $N$ to construct a Drinfeld module, though in a more direct way (see \cite[\S 5.2]{HartlJuschka16}).  We define an $\F_q$-linear homomorphism $\varepsilon : N \to L$ by setting
\begin{equation}\label{varepseq}
  \varepsilon \bigl( c_0 \delta^{(1)} + c_1 \delta f + \cdots + c_i \delta^{(-i+1)} f f^{(-1)} \cdots f^{(-i+1)} \bigr)
  = c_0 + c_1^q + \cdots + c_i^{q^i}, \quad c_j \in L.
\end{equation}

\begin{lemma}\label{varepslemma}
The map $\varepsilon : N \to L$ is surjective and
\[
\ker(\varepsilon) = (1 - \sigma)N  =  \bigl\{ g \in N \mid g = h^{(1)} - fh\ \textup{for some\ } h
\in \Gamma(U,\cO_{E}(-(V))) \bigr\}.
\]
Thus $\varepsilon$ induces an isomorphism of $\F_q$-vector spaces, $\varepsilon : N/(1-\sigma)N \iso L$.
\end{lemma}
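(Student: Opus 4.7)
The plan is to reduce everything to a direct computation on the explicit $L$-basis \eqref{dualbasis}. Abbreviate $b_0 = \delta^{(1)}$ and $b_j = \delta^{(-j+1)} f f^{(-1)} \cdots f^{(-j+1)}$ for $j \geq 1$. Surjectivity of $\varepsilon$ is immediate from $\varepsilon(c\, b_0) = c$. From $\sigma h = f h^{(-1)}$, a short calculation shows $\sigma(b_j) = b_{j+1}$, so $\sigma(c\, b_j) = c^{1/q}\, b_{j+1}$. Consequently, for $n = \sum_{j=0}^{d} c_j b_j \in N$,
\[
(1-\sigma)n = c_0\, b_0 + \sum_{k=1}^{d}\bigl(c_k - c_{k-1}^{1/q}\bigr) b_k - c_d^{1/q}\, b_{d+1},
\]
and applying $\varepsilon$ yields $c_0 + \sum_{k=1}^d \bigl(c_k^{q^k} - c_{k-1}^{q^{k-1}}\bigr) - c_d^{q^d}$, which telescopes to $0$. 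This establishes $(1-\sigma)N \subseteq \ker(\varepsilon)$.

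For the reverse inclusion, given $g = \sum_{j=0}^{d} g_j\, b_j$ with $\varepsilon(g) = \sum_{j=0}^{d} g_j^{q^j} = 0$, I will define $c_0, c_1, \ldots, c_d \in L$ recursively by $c_0 = g_0$ and $c_k = g_k + c_{k-1}^{1/q}$, using that $L$ is perfect to extract $q$-th roots. A straightforward induction gives $c_k^{q^k} = \sum_{j=0}^{k} g_j^{q^j}$, so in particular $c_d^{q^d} = \varepsilon(g) = 0$, and hence $c_d = 0$. Setting $n = \sum_{j=0}^{d-1} c_j\, b_j \in N$, the formula from the first paragraph gives $(1-\sigma)n = g$, as required.

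To conclude, I will match $(1-\sigma)N$ with $\{h^{(1)} - f h \mid h \in \Gamma(U, \cO_E(-(V)))\}$. For any $n \in N$, $(1-\sigma)n = n - f n^{(-1)} = h^{(1)} - f h$ with $h := n^{(-1)}$; Frobenius twisting of divisors gives $n \in \Gamma(U, \cO_E(-(V^{(1)})))$ if and only if $h \in \Gamma(U, \cO_E(-(V)))$, so the correspondence $n \leftrightarrow h = n^{(-1)}$ is a bijection between the two sets. The subtlest point is the termination of the recursion in the second paragraph: the hypothesis $\varepsilon(g) = 0$ is used precisely to force $c_d = 0$, which is what makes $g$ land in the image of $1-\sigma$ rather than creating an unwanted $b_{d+1}$ term.
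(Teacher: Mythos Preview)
Your proof is correct and follows essentially the same approach as the paper's: both work directly with the explicit basis \eqref{dualbasis}, establish $(1-\sigma)N \subseteq \ker(\varepsilon)$ by computing how $\sigma$ shifts basis elements, and handle the reverse inclusion by solving for the coefficients of a preimage. The paper omits the details of the reverse inclusion (``the proposed relation $g = h^{(1)} - fh$ induces relations on the coefficients of $h$ that uniquely determine it''), whereas you carry out the recursion explicitly and verify termination via $c_d^{q^d} = \varepsilon(g) = 0$; this is exactly the argument the paper is alluding to.
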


\begin{proof}
For $h \in \Gamma(U,\cO_E(-(V)))$, we have $h^{(1)} \in N$ and $\sigma(h^{(1)}) = fh$, so the two objects on the right are the same.  Also,
\[
  h^{(1)} = \sum_{i=0}^\ell c_i \delta^{(-i+1)} f f^{(-1)} \cdots f^{(-i+1)}
  \quad \Rightarrow\quad
  fh = \sum_{i=1}^{\ell+1} c_{i-1}^{1/q} \delta^{(-i+1)} f f^{(-1)} \cdots f^{(-i+1)},
\]
and it follows that $\varepsilon(h^{(1)}) = \varepsilon(fh)$.  Thus $(1-\sigma)N \subseteq \ker(\varepsilon)$.  To show that $\ker(\varepsilon) \subseteq (1-\sigma)N$, one shows that, for $g \in \ker(\varepsilon)$, the proposed relation $g = h^{(1)} - fh$ induces relations on the coefficients of $h$ that uniquely determine it.  We omit the details.
\end{proof}

Since elements of $\bA$ commute with $\sigma$, there is an induced $\bA$-module structure on $N/(1-\sigma)N$ and on $L$.  For $a \in \bA$, $\deg a = i$, if we write
\[
  a(t,y) \delta^{(1)} = a(\theta,\eta) \delta^{(1)} + c_1 \delta f  + c_2 \delta^{(-1)} f f^{(-1)}
  + \dots + c_i \delta^{(-i+1)} f f^{(-1)} \cdots f^{(-i+1)},
\]
then for any $x \in L$,
\[
  \varepsilon (a x \delta^{(1)}) = a(\theta,\eta)x + c_1^q x^q + \dots + c_i^{q^i} x^{q^i}.
\]
Thus if we define $\rho' : \bA \to L[\tau]$ by
\[
  \rho'_a = a(\theta,\eta) + c_1^q \tau + \cdots + c_i^{q^i} \tau^i,
\]
then one verifies that $\rho'$ is a Drinfeld module of rank~$1$ \cite[\S 5.2]{HartlJuschka16}, and moreover the map
\[
  \varepsilon : \frac{N}{(1 - \sigma)N} \iso \rho'(L)
\]
is an isomorphism of left $\bA$-modules, thus exemplifying the connection of Anderson between dual $\bA$-motives and Anderson $\bA$-modules (see \cite[\S 5]{HartlJuschka16}).  Again the action of $\rho'$ is determined by $\rho'_t$ and $\rho'_y$, and so we can find $z_1$, $w_1$, $w_2 \in L$ so that
\begin{gather}\label{teqdual}
t \delta^{(1)} = \theta \delta^{(1)} + z_1 \delta f + \delta^{(-1)} f f^{(-1)}, \\
\label{yeqdual}
y \delta^{(1)} = \eta \delta^{(1)} + w_1\delta f + w_2\delta^{(-1)} f f^{(-1)} + \delta^{(-2)} f f^{(-1)} f^{(-2)},
\end{gather}
and thus
\begin{equation}
\rho'_t = \theta + z_1^q \tau + \tau^2, \quad
\rho'_y = \eta + w_1^q \tau + w_2^{q^2}\tau^2 + \tau^3.
\end{equation}
The question of how $\rho$ and $\rho'$ are related is natural.

\begin{proposition}
The Drinfeld modules $\rho$ and $\rho'$ are equal.
\end{proposition}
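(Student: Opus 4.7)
To prove $\rho = \rho'$, the plan is to reduce the equality to the single identity $x_1 = z_1^q$, from which $y_1 = w_1^q$ and $y_2 = w_2^{q^2}$ will follow automatically via Theorem~\ref{T:DummitHayes}. The core step is to eliminate the shtuka function $f$ simultaneously from the two defining identities \eqref{taction} (for $M$) and \eqref{teqdual} (for $N$) and to compare what remains.

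Starting from $t - \theta = x_1 f + f f^{(1)}$, I would multiply both sides by $\delta\delta^{(1)}$ and use $\delta f = \nu$ together with $\delta^{(1)} f^{(1)} = \nu^{(1)}$ to obtain $\delta\delta^{(1)}(t-\theta) = x_1 \delta^{(1)} \nu + \nu\nu^{(1)}$.  Similarly, starting from $(t-\theta)\delta^{(1)} = z_1 \delta f + \delta^{(-1)} f f^{(-1)}$ (an identity in $L[t,y]$ since $L$ is algebraically closed), multiplication by $\delta$ together with $\delta^{(-1)} f f^{(-1)} = \nu\nu^{(-1)}/\delta$ yields $\delta\delta^{(1)}(t-\theta) = z_1 \delta\nu + \nu\nu^{(-1)}$.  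Equating the two right-hand sides and cancelling the common factor $\nu \neq 0$ leaves the polynomial identity
\[
x_1 \delta^{(1)} - z_1 \delta \;=\; \nu^{(-1)} - \nu^{(1)} \quad \text{in } L[t,y].
\]

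Reading off the coefficient of $t$ via $\delta = t-\alpha$ and $\nu = y + (m\theta - \eta) - mt$ gives $x_1 - z_1 = m^q - m^{1/q}$.  Combining with \eqref{x1mmq}, namely $x_1 = m + m^q + a_1$, and using $a_1 \in \F_q$ produces $z_1 = m + m^{1/q} + a_1$, and hence $z_1^q = m^q + m + a_1 = x_1$; the constant coefficient then provides a consistency check via \eqref{mquotients}.  This proves $\rho_t = \rho'_t$.  Since both $\rho$ and $\rho'$ are Drinfeld modules, each satisfies the commutation $\rho_t \rho_y = \rho_y \rho_t$ and its primed analog, so the first two equations of Theorem~\ref{T:DummitHayes}, applied respectively, express $(y_1,y_2)$ in terms of $x_1$ and $(w_1^q,w_2^{q^2})$ in terms of $z_1^q$ by the same formulas.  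Since $x_1 = z_1^q$, we conclude $y_1 = w_1^q$ and $y_2 = w_2^{q^2}$, so $\rho_y = \rho'_y$ and hence $\rho = \rho'$.

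The main obstacle is spotting the correct clearing of denominators so that the two decompositions land in $L[t,y]$ with a common left-hand side, allowing $\nu$ to factor out; once this is done the remaining polynomial identity collapses under a single coefficient comparison, made tractable by the closed-form expression for $x_1$ in Proposition~\ref{P:mx1}.
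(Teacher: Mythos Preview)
Your proof is correct but takes a somewhat different route from the paper's. The paper first twists \eqref{teqdual} once (so that $z_1$ becomes $z_1^q$) and multiplies \eqref{taction} through by $\delta$; both relations then share the term $\delta f f^{(1)}$, and subtracting yields
\[
0 = \delta(t-\theta) - \delta^{(2)}(t-\theta^q) - x_1\nu + z_1^q\nu^{(1)}.
\]
A pure degree argument at $\infty$ (the first two terms have pole order at most $2$, while $\nu$ and $\nu^{(1)}$ each have pole order $3$ with $\tsgn = 1$) forces $x_1 = z_1^q$ directly. You instead avoid the twist and arrange both relations to share the left-hand side $\delta\delta^{(1)}(t-\theta)$; after cancelling the common factor $\nu$ you obtain the linear identity $x_1\delta^{(1)} - z_1\delta = \nu^{(-1)} - \nu^{(1)}$, read off the $t$-coefficient to get $x_1 - z_1 = m^q - m^{1/q}$, and then feed this into the explicit formula \eqref{x1mmq}. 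The paper's approach is slightly more self-contained in that it never invokes Proposition~\ref{P:mx1}, whereas yours makes the dependence on $m$ explicit and produces the closed form $z_1 = m + m^{1/q} + a_1$ as a byproduct.
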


\begin{proof}
This amounts to showing that $z_1^q = x_1$, $w_1^q = y_1$, and $w_2^{q^2} = y_2$, and by Theorem~\ref{T:DummitHayes}, it suffices to check that $z_1^q = x_1$.  Twisting \eqref{teqdual} and multiplying~\eqref{taction} through by $\delta$, we obtain
\[
  \delta^{(2)} t = \theta^q \delta^{(2)} + z_1^q \delta^{(1)} f^{(1)} + \delta f f^{(1)}, \quad
  \delta t = \theta \delta + x_1 \delta f  + \delta f f^{(1)}.
\]
Equating the $\delta f f^{(1)}$ terms from these equations, we obtain the equality
\[
  0 = \delta (t-\theta) - \delta^{(2)}(t-\theta^q) - x_1 \nu + z_1^q \nu^{(1)}.
\]
Recalling that $\delta = t - \alpha$, we see that the term $\delta(t-\theta) - \delta^{(2)}(t-\theta^q)$ has a pole of order at most $2$ at $\infty$.  On the other hand, $\nu$ and $\nu^{(1)}$ have poles of order $3$ at $\infty$.  Since $\tsgn(\nu) = \tsgn(\nu^{(1)}) = 1$, in order for the poles to cancel we must have $x_1 = z_1^q$.
\end{proof}

\begin{corollary}
For any $a \in \bA$ and $h \in N$,
\[
  \varepsilon(ah) = \rho_a(\varepsilon(h)).
\]
\end{corollary}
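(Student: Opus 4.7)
The plan is to deduce the corollary directly from the structural results just established. Lemma~\ref{varepslemma} shows that $\varepsilon$ descends to an $\bA$-module isomorphism $\varepsilon : N/(1-\sigma)N \iso \rho'(L)$, and the preceding proposition identifies $\rho' = \rho$. So the identity $\varepsilon(ah) = \rho_a(\varepsilon(h))$ is essentially the $\bA$-linearity of this isomorphism, lifted from the quotient back up to $N$ itself.

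First I would check that multiplication by $a\in\bA$ commutes with $\sigma$ as operators on $N$. Since every $a\in\bA = \F_q[t,y]$ is twist-invariant, $\sigma(ah) = f(ah)^{(-1)} = fa^{(-1)}h^{(-1)} = a\cdot fh^{(-1)} = a\,\sigma(h)$. Hence multiplication by $a$ preserves $(1-\sigma)N$ and descends to $N/(1-\sigma)N$. Next I would reduce to the simplest case. Since $N$ is a free left $L[\sigma]$-module of rank one with basis $\delta^{(1)}$, every $h\in N$ can be written as $h = \sum_j x_j\sigma^j\delta^{(1)}$ with $x_j\in L$. Using $x\sigma^j\delta^{(1)} = \sigma(x^q\sigma^{j-1}\delta^{(1)})$, an induction shows
\[
h \equiv \Bigl(\sum_j x_j^{q^j}\Bigr)\delta^{(1)} = \varepsilon(h)\,\delta^{(1)} \pmod{(1-\sigma)N},
\]
and the same congruence applied to $ah$ reduces the corollary to the case $h = x\delta^{(1)}$ with $x\in L$.

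For this case I would just expand $a\delta^{(1)}$ in the $L$-basis \eqref{dualbasis} exactly as in the paragraph preceding the definition of $\rho'$: writing $a\delta^{(1)} = a(\theta,\eta)\delta^{(1)} + c_1\delta f + \cdots + c_i\delta^{(-i+1)}ff^{(-1)}\cdots f^{(-i+1)}$ with $\deg a = i$, by $L$-linearity $ah = a(\theta,\eta)x\delta^{(1)} + xc_1\delta f + \cdots + xc_i\delta^{(-i+1)}ff^{(-1)}\cdots f^{(-i+1)}$, and the definition \eqref{varepseq} then gives
\[
\varepsilon(ah) = a(\theta,\eta)x + c_1^q x^q + \cdots + c_i^{q^i}x^{q^i} = \rho'_a(x) = \rho'_a(\varepsilon(h)).
\]
The preceding proposition supplies $\rho_a = \rho'_a$, which completes the argument.

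The main (and mild) obstacle is the reduction step modulo $(1-\sigma)N$: since $\sigma$ does not commute with scalars in $L$, one must track the Frobenius twists carefully when sliding a scalar past a $\sigma$. Beyond this bookkeeping the argument is purely a direct unwinding of the constructions of $\varepsilon$ and $\rho'$.
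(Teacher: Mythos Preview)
Your proof is correct and follows exactly the approach the paper intends: the paper states this corollary without proof, treating it as immediate from the preceding discussion (namely that $\varepsilon$ induces an $\bA$-module isomorphism $N/(1-\sigma)N \iso \rho'(L)$ together with $\rho'=\rho$), and you have simply filled in the details of that implication. Your reduction $h \equiv \varepsilon(h)\,\delta^{(1)} \pmod{(1-\sigma)N}$ is the right way to lift the statement from the quotient back to $N$, and the bookkeeping with the twists is handled correctly.
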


It turns out that different information about $\rho$ is contained in both the $\bA$-motive $M$ and the dual $\bA$-motive $N$.  For example, arguing as in the proof of Proposition~\ref{P:mx1}, we can use \eqref{teqdual}--\eqref{yeqdual} to show that for $i \geq 0$,
\begin{equation} \label{fdualmotive}
  \frac{\delta^{(i)} f^{(i)}}{\delta^{(i+1)}} = \frac{y - \eta^{q^i} - (y_2-x_1)^{q^{i-2}} (t-\theta^{q^i})}{t - \theta^{q^{i-1}} + y_1^{q^{i-1}} - x_1^{q^{i-1}}(y_2-x_1)^{q^{i-2}}},
\end{equation}
which is a companion formula to~\eqref{fmotive}.
Using \eqref{fdualmotive} to calculate $g_i = (\delta^{(i)} f^{(i)})/ \delta^{(i+1)} \big|_{\Xi}$ from \eqref{figi} and comparing the result with Thakur's formula for $g_i$~\cite[Thm.~V]{Thakur92}, we find that they are the same.  As companion formulas to~\eqref{mt0motive}, we find  by taking $i=2$ in \eqref{fdualmotive}, that
\begin{equation}
  m^{q^2} = y_2 - x_1, \quad \alpha^{q^3} = \theta^q - y_1^q + x_1^q(y_2-x_1).
\end{equation}

\section{The function $\omega_\rho$ and the period $\pi_\rho$}\label{S:OmegaRho}

In this section and the next we take on the considerations of Anderson, Sinha, and Thakur, regarding the function $\omega_C$ of \eqref{omegaC}, the Carlitz period $\tpi$ of~\eqref{Carlitzper}, and Anderson generating functions, in the context of our Drinfeld $\bA$-module~$\rho$ from~\S\ref{S:DrinfeldModules}.  Although the constructions of Sinha would likely succeed in this case, because our situation is completely concrete we can take a more direct route that relies less on homological algebra and rigid analysis.  Sinha's constructions use $t$-motives, but for us it is somewhat more convenient to use the dual $\bA$-motive $N = \Gamma(U,\cO_E(-(V^{(1)})))$ of $\rho$ from \eqref{dualt}.  The notation of previous sections is maintained throughout, with $L = \C_\infty$.

Recalling the Tate algebra $\TT_\theta$ from \eqref{Tatealgs}, its extension $\TT_\theta[y]$, and the associated rigid analytic space $\cU$ of points on the curve $E$, we let
\[
  \BB := \Gamma \bigl( \cU, \cO_E(-(V) + (\Xi))\bigr),
\]
which is a $\C_\infty[t,y]$-module of rigid analytic functions on $\cU$ that vanish at $V$ and have at most a simple pole at $\Xi$ (see~\cite[Chs.~3--4]{FresnelvdPut} for precise definitions).  We then define a space of functions on $\cU$ that satisfy certain difference properties,
\[
  \Omega := \bigl\{ h \in \BB \bigm| h^{(1)} - fh \in N \bigr\},
\]
which is naturally an $\bA$-module.  Of particular importance is the subspace
\[
  \Omega_0 := \bigl\{ h \in \Omega \bigm| h^{(1)} - fh = 0\bigr\}.
\]
We view functions $h \in \Omega$ as being mapped to rational functions in $N$ under the difference operator $\tau - f \in H(t,y)[\tau]$, which is similar to operators appearing elsewhere (e.g., see \cite[\S 3]{ABP04}, \cite[\S 4]{P08}, \cite[\S 2]{Pellarin12}, \cite[\S 4.2]{Sinha97}).  We define two further difference operators in $H(t,y)[\tau]$,
\begin{align}\label{Dt}
  D_t &= \rho_t - t = \theta - t + x_1\tau + \tau^2, \\
\label{Dy}
  D_y &= \rho_y - y = \eta - y + y_1\tau + y_2\tau^2 + \tau^3.
\end{align}
We note that for $h \in \Omega_0$, by~\eqref{taction} we have
\[
  D_t(h) = (\theta - t)h + x_1h\twist + h\twistk{2} = (\theta - t)h + x_1h f + h f f\twist = 0,
\]
and we similarly find from~\eqref{yaction} that $D_y(h) = 0$.  If a function vanishes under $D_t$ and $D_y$, then it vanishes under $\tau - f$ as well, as we see from the following proposition.

\begin{proposition}\label{P:tau-f}
As an element of $H(t,y)[\tau]$, the operator $\tau - f$ can be decomposed as
\begin{equation}\label{taufdecomp}
  \tau-f = \frac{1}{t-\alpha}\bigl(D_y - (\tau + m)D_t\bigr).
\end{equation}
\end{proposition}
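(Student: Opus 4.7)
The plan is to verify the identity by direct expansion of the right-hand side as an element of $H(t,y)[\tau]$ and to show it equals $(t-\alpha)(\tau - f)$, after which dividing by $t-\alpha = \delta$ gives the result. The two critical inputs will be the identities relating $m$, $\alpha$, $x_1$, $y_1$, $y_2$ from Proposition~\ref{P:mx1} and \eqref{mt0motive}, together with the formula \eqref{fdef} for $f$.

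First I would expand $(\tau + m)D_t$ using $D_t = \theta - t + x_1\tau + \tau^2$ and the commutation rule $\tau c = c^q\tau$ for $c \in H$. Since $t \in \bK$ is fixed by Frobenius twisting, this gives
\[
(\tau + m)D_t = m(\theta - t) + \bigl[mx_1 + \theta^q - t\bigr]\tau + \bigl[m + x_1^q\bigr]\tau^2 + \tau^3.
\]
Subtracting from $D_y = \eta - y + y_1\tau + y_2\tau^2 + \tau^3$, the $\tau^3$ terms cancel, the $\tau^2$ coefficient becomes $y_2 - m - x_1^q$, the $\tau$ coefficient becomes $y_1 - mx_1 - \theta^q + t$, and the constant term becomes $\eta - y - m(\theta - t)$.

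Next I would simplify each coefficient. For $\tau^2$, Proposition~\ref{P:mx1}\eqref{my2x1q} gives $m = y_2 - x_1^q$, so this coefficient vanishes. For $\tau$, equation \eqref{mt0motive} gives $\alpha = \theta^q - y_1 + x_1 m$, i.e.\ $y_1 - mx_1 - \theta^q = -\alpha$, so the coefficient is $t - \alpha$. For the constant term, rewrite $\eta - y - m(\theta - t) = -\bigl(y - \eta - m(t-\theta)\bigr) = -(t-\alpha)f$ by the definition \eqref{fdef} of the shtuka function. Altogether this yields
\[
D_y - (\tau + m)D_t = -(t-\alpha)f + (t-\alpha)\tau = (t-\alpha)(\tau - f),
\]
and dividing by $t-\alpha$ gives \eqref{taufdecomp}.

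There is no real obstacle here; the proof is a short bookkeeping calculation. The only point requiring care is the noncommutativity: $\tau$ commutes with $t$ and $y$ (they lie in the fixed ring $\bK$) but twists coefficients in $H$ to their $q$-th powers, so one must be careful when moving $\tau$ past $\theta$, $m$, $x_1$, $y_1$ in the expansion. The crucial algebraic miracle that makes the $\tau^2$ and $\tau^3$ coefficients disappear and that replaces the $\tau$-coefficient by $t - \alpha$ is precisely the content of Proposition~\ref{P:mx1}, which is already proved.
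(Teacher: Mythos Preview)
Your proof is correct, but it takes a different route from the paper's. The paper first exhibits factorizations
\[
D_t = \Bigl(\tau + \tfrac{t-\theta}{f}\Bigr)(\tau - f), \qquad
D_y = \Bigl(\tau^2 + (f^{(2)} + y_2)\tau + \tfrac{y-\eta}{f}\Bigr)(\tau - f),
\]
and then performs a Euclidean-style step on the left factors, writing the second left factor as $(\tau + y_2 - x_1^q)$ times the first plus the remainder $t-\alpha$; the identity then drops out after invoking $m = y_2 - x_1^q$. In other words, the paper proves the proposition by showing that $\tau - f$ is a right greatest common divisor of $D_t$ and $D_y$ in $H(t,y)[\tau]$. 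Your approach is the more elementary one: you expand $D_y - (\tau+m)D_t$ directly in powers of $\tau$ and kill each coefficient using \eqref{my2x1q}, \eqref{mt0motive}, and \eqref{fdef}. What you gain is brevity and a proof that needs no prior knowledge of the factorizations \eqref{Dtfactor}--\eqref{Dyfactor}; what the paper's route buys is those factorizations themselves, which are used later (e.g.\ in Lemma~\ref{L:rhofp}), and the conceptual statement about $\tau - f$ as a right GCD.
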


\begin{proof}
We observe that the operators $D_t$ and $D_y$ factor as
\begin{align}
\label{Dtfactor}
D_t &= \rho_t - t = \biggl(\tau + \frac{t-\theta}{f}\biggr)(\tau - f ), \\
\label{Dyfactor}
D_y &= \rho_y - y = \biggl(\tau^2 + (f\twistk{2} + y_2 )\tau + \frac{y-\eta}{f}\biggr) (\tau-f ),
\end{align}
so that $D_t$, $D_y$ have $\tau-f$ as a right divisor.  We compute that
\[
  \tau^2 + (f\twistk{2} + y_2)\tau + \frac{y-\eta}{f}
  = (\tau + y_2-x_1^q ) \biggl(\tau + \frac{t-\theta}{f} \biggr) + t-\alpha,
\]
and using~\eqref{my2x1q}, \eqref{Dtfactor}, and \eqref{Dyfactor}, the decomposition for $\tau - f$ follows.  In essence we have shown that $\tau - f$ is a right greatest common divisor of $D_t$ and $D_y$ in $H(t,y)[\tau]$.
\end{proof}

In order to determine $\Omega_0$ exactly we define $\omega_\rho \in \Omega_0$, similar to $\omega_C$ in~\eqref{omegaC}.  We begin by fixing $(q-1)$-st roots of $-\alpha$ and $m \theta - \eta$, and then we set
\begin{align*}
  \delta_\rho &:= (-\alpha)^{1/(q-1)} \prod_{i=0}^\infty \biggl( 1 - \frac{t}{\alpha^{q^i}}
  \biggr)^{-1}, \\
  \nu_\rho &:= (m\theta - \eta)^{1/(q-1)} \prod_{i=0}^\infty \Biggl( 1 - \biggl( \frac{m}{m\theta - \eta} \biggr)^{q^i} t + \biggl( \frac{1}{m\theta -\eta} \biggr)^{q^i} y \Biggr)^{-1}.
\end{align*}
Since $\deg(\alpha) = 2$, it follows that the product for $\delta_{\rho}$ converges in $\TT$ with respect to the Gauss norm $\lVert\,\cdot\,\rVert$ and that $\delta_{\rho} \in \TT^{\times}$.  Moreover $\delta_{\rho} \in \Gamma(\cU,\cO_E((V)+(-V)))$.  By~\eqref{mquotients} $\deg(m) = q$, and so it similarly follows that $\nu_{\rho} \in \TT[y]^{\times}$, and furthermore $\nu_\rho \in \Gamma(\cU,\cO_E((-V)+(\Xi)))$.  (The constructions of $\delta_{\rho}$, $\nu_{\rho}$ should be compared with \cite[Eq.~(20)]{AnglesPellarinRibeiro16}.)  Applying twists,
\[
  \delta_{\rho}^{(1)} = \delta \cdot \delta_{\rho}, \quad \nu_{\rho}^{(1)} = \nu \cdot \nu_{\rho},
\]
and thus setting
\begin{equation} \label{omegarhodef}
  \omega_\rho := \frac{\nu_{\rho}}{\delta_{\rho}} \in \TT[y]^{\times},
\end{equation}
we have that $\omega_\rho \in \BB = \Gamma(\cU,\cO_E(-(V)+(\Xi)))$ and that
\begin{equation} \label{omegarhoprod}
  \omega_\rho = \xi^{1/(q-1)} \prod_{i=0}^\infty \frac{\xi^{q^i}}{f^{(i)}}, \quad \xi = -\frac{m\theta -\eta}{\alpha} = -\biggl( m + \frac{\beta}{\alpha} \biggr).
\end{equation}
Moreover by \eqref{fdef},
\begin{equation} \label{omegarhotwist}
  \omega_\rho^{(1)} = f \cdot \omega_\rho.
\end{equation}
We have thus proved the following lemma.

\begin{lemma} \label{L:omegarhoOmega0}
With notation as above, $\omega_\rho$ is an element of $\Omega_0$.
\end{lemma}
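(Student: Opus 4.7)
The plan is to check the two defining conditions of $\Omega_0$ directly: that $\omega_\rho$ actually lies in $\BB = \Gamma(\cU, \cO_E(-(V)+(\Xi)))$, and that it satisfies the Frobenius difference equation $\omega_\rho^{(1)} = f \cdot \omega_\rho$. Both assertions are essentially packaged in the construction immediately preceding the lemma, so the proof amounts to assembling those observations.

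First I would verify convergence of the two factors $\delta_\rho$ and $\nu_\rho$ in their respective Tate algebras. Since $\deg(\alpha) = 2$, the norms $\lVert t/\alpha^{q^i} \rVert$ tend to $0$, and each factor $(1 - t/\alpha^{q^i})^{-1}$ is a unit in $\TT$, so the infinite product for $\delta_\rho$ converges to a unit in $\TT$. An analogous argument, using $\deg(m) = q$ from \eqref{mquotients}, shows that the factors making up $\nu_\rho$ converge to a unit in $\TT[y]$. To identify the divisors, one can locate the zeros and poles of the partial products: $\delta_\rho$ acquires simple poles at each $V^{(i)}$, and since only $V$ lies in $\cU$, one gets $\delta_\rho \in \Gamma(\cU, \cO_E((V)+(-V)))$ (the $(-V)$ pole comes from the $y$-expansion on $E$ together with $\deg(\beta) = 3$). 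A parallel inspection of $\nu_\rho$ yields $\nu_\rho \in \Gamma(\cU, \cO_E((-V)+(\Xi)))$. Taking the quotient gives $\omega_\rho = \nu_\rho/\delta_\rho \in \Gamma(\cU, \cO_E(-(V)+(\Xi))) = \BB$.

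Next I would verify the twist identity. Shifting the index in the infinite product for $\delta_\rho$ by one telescopes:
\[
  \delta_\rho^{(1)} = (-\alpha)^{q/(q-1)} \prod_{i=0}^\infty \biggl( 1 - \frac{t}{\alpha^{q^{i+1}}} \biggr)^{-1} = (t-\alpha) \cdot \delta_\rho = \delta \cdot \delta_\rho,
\]
and the analogous telescoping computation for $\nu_\rho^{(1)}$ produces the factor $\nu$. Dividing yields $\omega_\rho^{(1)} = (\nu/\delta) \cdot \omega_\rho = f \cdot \omega_\rho$ by \eqref{fdef}, which is \eqref{omegarhotwist}.

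There is no substantial obstacle here; the lemma is really a bookkeeping consequence of the definition of $\omega_\rho$ in \eqref{omegarhodef} together with the product expansion \eqref{omegarhoprod}. The only thing to be careful about is that the pole of $\delta_\rho$ at $-V$ and the zero of $\nu_\rho$ at $-V$ cancel cleanly in the quotient, which is immediate from $\divisor(f) = (V^{(1)}) - (V) + (\Xi) - (\infty)$ once one knows $\nu_\rho^{(1)}/\delta_\rho^{(1)} = f \cdot \nu_\rho/\delta_\rho$.
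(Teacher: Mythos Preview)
Your proposal is correct and follows essentially the same approach as the paper: the lemma is stated immediately after the construction of $\omega_\rho$, and the paper's proof \emph{is} that construction---verifying convergence of $\delta_\rho$, $\nu_\rho$ as units in the appropriate Tate algebras, identifying their divisors on $\cU$, taking the quotient to land in $\BB$, and checking the twist identity by telescoping. One small point: your explanation of the pole of $\delta_\rho$ at $-V$ (``comes from the $y$-expansion on $E$ together with $\deg(\beta) = 3$'') is muddled---$\delta_\rho$ depends only on $t$, so each factor $(1 - t/\alpha^{q^i})^{-1}$ has poles at both points of $E$ with $t$-coordinate $\alpha^{q^i}$, namely $\pm V^{(i)}$, exactly as $\divisor(\delta) = (V) + (-V) - 2(\infty)$ in \eqref{deltadiv}.
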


For other approaches to defining Anderson-Thakur functions for rank one Drinfeld modules on curves of arbitrary genera, the reader is directed to~\cite{AnglesNgoDacRibeiro16a}.

It is also apparent that $\omega_\rho$ extends meromorphically to all of $U$, with simple poles at $\Xi^{(i)}$, $i \geq 0$.  The following proposition is fundamental.

\begin{proposition} \label{Omegaprop}
The function $\omega_\rho$ generates $\Omega_0$ as a free $\bA$-module.
\end{proposition}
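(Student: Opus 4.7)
The plan is to show that the $\bA$-linear map $\bA \to \Omega_0$ sending $a \mapsto a\omega_\rho$ is an isomorphism; it is well-defined by Lemma~\ref{L:omegarhoOmega0}. Injectivity is immediate, since $\omega_\rho \in \TT[y]^\times$ by~\eqref{omegarhodef}, so $\omega_\rho$ is a non-zero-divisor; this also yields the freeness claim once surjectivity is established.

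For surjectivity, given $h \in \Omega_0$ I will form the meromorphic ratio $g := h/\omega_\rho$ and prove it lies in $\bA$. The defining relation $h^{(1)} = f h$ together with~\eqref{omegarhotwist} gives $g^{(1)} = (fh)/(f\omega_\rho) = g$; since the fixed field of Frobenius twisting on the fraction field of $\TT[y]$ is $\bK = \F_q(t,y)$, we conclude $g \in \bK$.

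The crucial step is to pin down the divisor of $\omega_\rho$ restricted to the affinoid $\cU$, using the product expansion~\eqref{omegarhoprod}. The scalar factors $\xi^{q^i}$ contribute nothing to the divisor, and each $f^{(i)}$ satisfies $\divisor(f^{(i)}) = (V^{(i+1)}) - (V^{(i)}) + (\Xi^{(i)}) - (\infty)$. For $i \geq 1$ the three points $V^{(i)}$, $V^{(i+1)}$, and $\Xi^{(i)}$ all have $t$-coordinate of absolute value strictly larger than $|\theta|$, so $f^{(i)}$ is a unit on $\cU$; for $i = 0$ only $V$ and $\Xi$ meet $\cU$, whence $f|_{\cU}$ has divisor $(\Xi) - (V)$. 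Telescoping then produces $\divisor_{\cU}(\omega_\rho) = (V) - (\Xi)$ exactly. Combined with $\divisor_{\cU}(h) \geq (V) - (\Xi)$ coming from $h \in \BB$, this yields $\divisor_{\cU}(g) \geq 0$, so $g$ is regular on all of $\cU$.

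Finally, I will upgrade this regularity to $g \in \bA$ by the following observation: any pole of the rational function $g \in \bK$ in $E \setminus \{\infty\}$ occurs at a closed point of $E/\F_q$, whose $t$-coordinate is algebraic over $\F_q$ and therefore has absolute value at most $1$, which is strictly less than $|\theta|$. Every such closed point thus lies in $\cU$, so the only possible pole of $g$ is at $\infty$; hence $g \in \bA$ and $h = g\omega_\rho$, completing the argument. The main obstacle I anticipate is the third-paragraph divisor computation for $\omega_\rho$ on $\cU$; the remaining pieces are essentially formal once that calculation is secured.
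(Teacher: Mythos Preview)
Your argument is correct and rests on the same core idea as the paper---form $g = h/\omega_\rho$ and exploit $g^{(1)} = g$---but you have worked much harder than necessary.  The paper dispatches surjectivity in two lines: since $\omega_\rho \in \TT[y]^\times$ (which you already invoked for injectivity) and since any $h \in \BB$ restricts to an element of $\TT[y]$ (its only possible pole on $\cU$ is at $\Xi$, and $|t(\Xi)| = |\theta| > 1$ places $\Xi$ outside the closed unit disk), one has $g \in \TT[y]$ outright, not merely in a fraction field.  Then the fact recorded at the end of \S\ref{S:Notation}, that $\bA$ is the fixed ring of $\TT[y]$ under Frobenius twisting, gives $g \in \bA$ immediately.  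Your divisor computation on $\cU$ and the closed-point argument in the final paragraph are therefore superfluous, though correct.

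One small gap to flag: your assertion that ``the fixed field of Frobenius twisting on the fraction field of $\TT[y]$ is $\bK$'' is plausible but is not established anywhere in the paper---only the fixed-\emph{ring} statement for $\TT[y]$ itself is recorded.  This becomes moot once you observe $g \in \TT[y]$, but as written your second paragraph leans on an unproved claim.
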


\begin{proof}
This is an adaptation of a result of Anderson and Thakur \cite[Lemma 2.5.4]{AndThak90} and its proof.  As noted above, $\omega_\rho \in \TT[y]^{\times}$, since all of its zeros and poles lie outside the inverse image under $t$ of the closed unit disk in $\C_\infty$.  Now for any $h \in \Omega_0$, we can let $g = h/\omega_\rho$, and it follows that $g^{(1)} = g$.  Therefore, $g \in \bA$, which is the fixed ring of $\TT[y]$ under twisting.  Thus $h \in \bA \omega_\rho$, and we are done.
\end{proof}

The function $\omega_\rho$ is an example of an Anderson generating function, which we will see in the course of the proof of Theorem~\ref{T:Resmap} below, and indeed the situation is similar to that of $\omega_C$ (see \cite[Prop.~2.2.5]{AndThak90}, \cite[\S 4]{EP14}).  We review here briefly some necessary facts about Anderson generating functions, but we note that they will be investigated more fully in \S\ref{S:AndGenFun}.  For $u \in \C_\infty$ we set
\begin{equation} \label{Eugeo}
  E_u(t) := \sum_{n=0}^\infty \exp_\rho \biggl( \frac{u}{\theta^{n+1}} \biggr) t^n \in \TT,
\end{equation}
and viewing $\rho$ as a rank $2$ Drinfeld $\F_q[t]$-module, we see from~\cite[\S 4.2]{Pellarin08} (or see \cite[Prop.~3.2]{EP14}) that $E_u$ extends meromorphically to all of $\C_\infty$ with simple poles at $t = \theta^{q^i}$, $i \geq 0$, via the partial fraction decomposition,
\begin{equation} \label{Euparfrac}
  E_u = \sum_{i=0}^\infty \frac{u^{q^i}}{d_i(\theta^{q^i} - t)}.
\end{equation}
In particular we have
\begin{equation} \label{ResEu}
  \Res_{t=\theta}(E_u\,dt) = -u, \quad \Res_{t = \theta^{q^i}} (E_u\,dt) = -\frac{u^{q^i}}{d_i}.
\end{equation}
Furthermore, we recall from \cite[\S 4.2]{Pellarin08} (or see \cite[Prop.~6.2]{EP14}) that
\begin{equation} \label{DtEu}
  D_t(E_u) = \exp_\rho(u).
\end{equation}
We define the Anderson generating function for $u$ associated to $\rho$ to be the function
\begin{equation} \label{Gudef}
  G_u := E_{\eta u} + (y+ a_1 t + a_3) E_{u} \in \TT[y],
\end{equation}
which extends meromorphically to all of $U$.  It has simple poles at $\Xi^{(i)}$, $i \geq 0$, with residues
\begin{equation} \label{GuResXitwist}
  \Res_{\Xi^{(i)}} (G_u \lambda) = \frac{-(\eta u)^{q^i} + (\eta^{q^i} + a_1 \theta^{q^i} + a_3) \bigl(-u^{q^i}\bigr)}{(2\eta^{q^i} + a_1 \theta^{q^i} + a_3) d_i} = -\frac{u^{q^i}}{d_i},
\end{equation}
and in particular
\begin{equation} \label{GuXiRes}
\Res_{\Xi} (G_u \lambda) = -u.
\end{equation}
One calculates that $\Res_{-\Xi^{(i)}}(G_u \lambda) = 0$ for $i \geq 0$, and so $G_u$ is regular at $-\Xi^{(i)}$ and moreover its only poles on $U$ are at $\Xi^{(i)}$, $i \geq 0$.  Now using \eqref{DtEu} we see that
\begin{equation} \label{DtGu}
  D_t(G_u) = \exp_{\rho}(\eta u) + (y + a_1 t + a_3)\exp_{\rho}(u),
\end{equation}
and after some calculation we find that
\begin{equation} \label{DyGu}
  D_y(G_u) = -a_1 \exp_\rho(\eta u) + \exp_\rho(\theta^2 u) + (t+a_2)\exp_\rho(\theta u) + (t^2 + a_2 t + a_4) \exp_\rho(u).
\end{equation}
Notably we see that $D_t(G_u)$, $D_y(G_u) \in \C_\infty[t,y]$.  Finally, we calculate $G_u(V)$, and we see from~\eqref{Euparfrac} that
\begin{equation} \label{GuV}
\begin{split}
  G_u(V) &= \sum_{i=0}^{\infty} \frac{\eta^{q^i} u^{q^i} + (\beta + a_1 \alpha + a_3) u^{q^i}}{d_i} \cdot \frac{1}{\theta^{q^i}-\alpha} \\
  &= \sum_{i=0}^{\infty} \frac{u^{q^i}}{d_i} \biggl( \frac{\eta^{q^i} + (\beta + a_1 \alpha + a_3)}{\theta^{q^i} - \alpha} \biggr) \\
  &= \sum_{i=0}^{\infty} \frac{u^{q^i}}{d_i} \bigl( f \bigl( \Xi^{(i)}\bigr) + m \bigr) \\
  &= \exp_{\rho}(u)^q + m \exp_{\rho}(u),
\end{split}
\end{equation}
where the third equality follows from~\eqref{fdef} and the last from~\eqref{diformula}.

Now fix $\pi \in \Lambda_\rho = \ker(\exp_\rho)$, and consider the function $G_{\pi}$.  We see from \eqref{DtGu} and \eqref{DyGu} that $D_t(G_{\pi}) = D_y(G_{\pi}) = 0$,
and thus by \eqref{taufdecomp} we have
\begin{equation}
  (\tau - f)(G_\pi) = G_{\pi}^{(1)} - f G_{\pi} = 0.
\end{equation}
Furthermore, from~\eqref{GuV} we see that $G_{\pi}(V) =0$.  Combining these calculations with \eqref{GuXiRes} we have established the following proposition.

\begin{proposition} \label{P:GpiOmega0}
For $\pi \in \Lambda_\rho$, the function $G_{\pi}$ is an element of $\Omega_0$, and furthermore $\Res_{\Xi}(G_\pi \lambda) = -\pi$.
\end{proposition}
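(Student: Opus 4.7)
The plan is to verify the two required properties of $G_\pi$ directly from the formulas already assembled for $G_u$, namely \eqref{DtGu}, \eqref{DyGu}, \eqref{GuV}, and \eqref{GuXiRes}, together with the factorization of $\tau-f$ from Proposition~\ref{P:tau-f}. The key input is that $\Lambda_\rho$ is an $A$-module: for any $a \in A$, the functional equation \eqref{expfunctionalequation} gives $\exp_\rho(\iota(a)\pi) = \rho_a(\exp_\rho(\pi)) = 0$. So in particular $\exp_\rho(\pi) = \exp_\rho(\eta\pi) = \exp_\rho(\theta\pi) = \exp_\rho(\theta^2\pi) = 0$.

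First I would apply the difference operators $D_t$ and $D_y$ to $G_\pi$. Substituting $u = \pi$ into the right-hand sides of \eqref{DtGu} and \eqref{DyGu}, every term is a value of $\exp_\rho$ on an element of $A\pi \subseteq \Lambda_\rho$ and therefore vanishes. Hence $D_t(G_\pi) = D_y(G_\pi) = 0$. Proposition~\ref{P:tau-f} gives the identity
\[
\tau - f = \frac{1}{t-\alpha}\bigl(D_y - (\tau+m)D_t\bigr)
\]
in $H(t,y)[\tau]$, which immediately forces $(\tau - f)(G_\pi) = G_\pi^{(1)} - fG_\pi = 0$.

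Next I would check that $G_\pi \in \BB = \Gamma(\cU, \cO_E(-(V) + (\Xi)))$. The discussion surrounding \eqref{Eugeo}--\eqref{GuResXitwist} shows that $G_u$ is a rigid analytic function on $\cU$ with at worst simple poles at the points $\Xi^{(i)}$; since $|t(\Xi^{(i)})| = |\theta|^{q^i} > |\theta|$ for $i \geq 1$, only the pole at $\Xi$ itself lies in $\cU$, so $G_u$ has a unique simple pole on $\cU$ at $\Xi$. Vanishing at $V$ then follows from \eqref{GuV}: $G_\pi(V) = \exp_\rho(\pi)^q + m\exp_\rho(\pi) = 0$. Combined with the previous paragraph this yields $G_\pi \in \Omega_0$.

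Finally, the residue computation is immediate: specializing \eqref{GuXiRes} at $i=0$ and $u=\pi$ gives $\Res_\Xi(G_\pi \lambda) = -\pi$. There is really no obstacle here beyond bookkeeping — the proposition is a clean packaging of the preparatory computations of $D_t(G_u)$, $D_y(G_u)$, $G_u(V)$, and $\Res_\Xi(G_u\lambda)$ already carried out in the text, together with the factorization of $\tau-f$ through $D_t$ and $D_y$.
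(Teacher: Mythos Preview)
Your proof is correct and follows essentially the same route as the paper: use \eqref{DtGu}, \eqref{DyGu} (with the $A$-module property of $\Lambda_\rho$) to kill $D_t(G_\pi)$ and $D_y(G_\pi)$, invoke Proposition~\ref{P:tau-f} to get $G_\pi^{(1)}=fG_\pi$, use \eqref{GuV} for the vanishing at $V$, and read off the residue from \eqref{GuXiRes}. One tiny slip: in the last paragraph you refer to ``specializing \eqref{GuXiRes} at $i=0$,'' but \eqref{GuXiRes} is already the $i=0$ case; you meant either to cite \eqref{GuResXitwist} and set $i=0$, or simply to quote \eqref{GuXiRes} directly.
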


Proposition~\ref{Omegaprop} implies that each non-zero $h \in \Omega_0$ has simple poles at each $\Xi^{(i)}$, $i \geq 0$.  Therefore taking residues at $\Xi$ defines a map $\Res: \Omega_0 \to \C_\infty$, where
\begin{equation} \label{Resdef}
  \Res(h) := \Res_\Xi(h\lambda), \quad h \in \Omega_0.
\end{equation}

\begin{theorem} \label{T:Resmap}
The map $\Res: \Omega_0\to \C_\infty$ is injective, and its image is $\Lambda_\rho = \ker(\exp_\rho)$.
\end{theorem}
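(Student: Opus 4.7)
The plan is to prove the theorem in three parts: injectivity, the inclusion $\Lambda_\rho \subseteq \Im(\Res)$, and the reverse inclusion.

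For injectivity, Proposition~\ref{Omegaprop} identifies $\Omega_0 = \bA\omega_\rho$ as a free $\bA$-module of rank one, so any $h \in \Omega_0$ has a unique expression $h = a\omega_\rho$ with $a \in \bA$.  Since $a$ is regular at $\Xi$, $\Res(h) = \iota(a)\Res_\Xi(\omega_\rho\lambda) = -\iota(a)\pi_\rho$ upon setting $\pi_\rho := -\Res_\Xi(\omega_\rho\lambda)$.  The product formula~\eqref{omegarhoprod} exhibits $\omega_\rho$ with a genuine simple pole at $\Xi$ coming from the $i=0$ factor $1/f$ (using~\eqref{divf}, $f$ has a simple zero at $\Xi$, while the remaining factors are regular and nonzero there), so $\pi_\rho \neq 0$; therefore $\Res(h)=0$ forces $a = 0$ and $h=0$.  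The inclusion $\Lambda_\rho \subseteq \Im(\Res)$ follows directly from Proposition~\ref{P:GpiOmega0}: for each $\pi \in \Lambda_\rho$ we have $G_\pi \in \Omega_0$ with $\Res(G_\pi) = -\pi$, and $-\Lambda_\rho = \Lambda_\rho$.

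For the reverse inclusion, since $\Im(\Res) = -\iota(\bA)\pi_\rho = A\pi_\rho$, it suffices to show $\pi_\rho \in \Lambda_\rho$, equivalently $\exp_\rho(\pi_\rho) = 0$.  My strategy is to identify $\omega_\rho$ with the Anderson generating function $G_{\pi_\rho}$ defined by~\eqref{Gudef} with $u = \pi_\rho$ treated merely as a scalar in $\C_\infty$.  Granting the identification $\omega_\rho = G_{\pi_\rho}$, applying $D_t$ together with $D_t(\omega_\rho) = 0$ (verified just before Proposition~\ref{P:tau-f} via~\eqref{taction}) and formula~\eqref{DtGu} yields
\[
  0 = D_t(G_{\pi_\rho}) = \exp_\rho(\eta\pi_\rho) + (y + a_1 t + a_3)\exp_\rho(\pi_\rho),
\]
and matching the coefficient of $y$ forces $\exp_\rho(\pi_\rho) = 0$.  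The first step in proving $\omega_\rho = G_{\pi_\rho}$ is residue matching at every $\Xi^{(i)}$: iterating $\omega_\rho = \omega_\rho^{(1)}/f$ and using that Frobenius twisting carries a principal part $c/(t-\theta^{q^i})$ to $c^q/(t-\theta^{q^{i+1}})$, a short induction using~\eqref{diformula} gives $\Res_{\Xi^{(i)}}(\omega_\rho\lambda) = -\pi_\rho^{q^i}/d_i$, which agrees on the nose with the residues of $G_{\pi_\rho}$ computed in~\eqref{GuResXitwist}.

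The main obstacle is the rigidity step: upgrading this termwise agreement of residues to the actual equality $\omega_\rho = G_{\pi_\rho}$ in $\TT[y]$.  After residue matching, the difference $P := \omega_\rho - G_{\pi_\rho}$ has no poles on $U$, so it lies in $\C_\infty[t,y] \cap \TT[y]$.  To force $P = 0$, I would apply the differential operators $D_t$ and $D_y$ to $P$: by~\eqref{DtGu} and the analogous identity for $D_y$ (both exhibited in the paragraph containing~\eqref{DyGu}), both $D_t(P)$ and $D_y(P)$ become explicit polynomials in $t$ and $y$ whose coefficients lie among the scalars $\exp_\rho(a\pi_\rho)$ for $a \in \{1,\eta,\theta,\theta^2\}$.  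Combining these with $\omega_\rho(V)=0$ (since $\omega_\rho \in \BB$), the evaluation $G_{\pi_\rho}(V) = \exp_\rho(\pi_\rho)^q + m\exp_\rho(\pi_\rho)$ from~\eqref{GuV}, and the uniqueness of elements of $\Omega_0$ guaranteed by Proposition~\ref{Omegaprop}, the system self-consistently forces $P = 0$ and $\exp_\rho(\pi_\rho)=0$ simultaneously, completing the proof.
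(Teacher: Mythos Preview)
Your injectivity argument and the inclusion $\Lambda_\rho \subseteq \Im(\Res)$ are fine, and indeed cleaner than what the paper does since you use Proposition~\ref{Omegaprop} directly.  The residue-matching computation for $\omega_\rho$ at $\Xi^{(i)}$ is also correct.  The problem is entirely in the rigidity step.

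First, your claim that $P := \omega_\rho - G_{\pi_\rho}$ lies in $\C_\infty[t,y]$ is not justified and in fact is false as stated.  Matching residues at every $\Xi^{(i)}$ shows that $P$ is an entire rigid analytic function on $U$, but entire elements of $\TT[y]$ need not be polynomials: any power series $\sum c_n t^n$ with $|c_n|\to 0$ sufficiently fast is entire without being polynomial.  So you cannot pass from ``no poles'' to ``polynomial'' here.  Second, your concluding paragraph is circular.  You invoke Proposition~\ref{Omegaprop} to force $P=0$, but that proposition applies only to elements of $\Omega_0$, and $P\in\Omega_0$ is equivalent to $G_{\pi_\rho}\in\Omega_0$, which by the computations preceding Proposition~\ref{P:GpiOmega0} holds exactly when $\exp_\rho(\pi_\rho)=0$, the very thing you are trying to prove.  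The phrase ``the system self-consistently forces $P=0$ and $\exp_\rho(\pi_\rho)=0$ simultaneously'' does not name an actual mechanism that breaks this circle.

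The paper avoids this trap by reversing the logic: given $h\in\Omega_0$, it expands $h$ as a power series in $t$ with coefficients $b_{n+1},c_{n+1}$, uses $D_t(h)=D_y(h)=0$ to obtain the recursions $\rho_t(b_{n+1})=b_n$, $\rho_t(c_{n+1})=c_n$, $\rho_y(c_{n+1})=b_{n+1}$, and then, because $|b_n|,|c_n|\to 0$, applies $\log_\rho$ to the tail to \emph{construct} $\pi := \theta^n\log_\rho(c_n)$ directly and verify $\exp_\rho(\pi)=\rho_t(c_1)=0$.  This produces $\pi\in\Lambda_\rho$ with $h=G_\pi$ from scratch rather than trying to match $h$ to a $G_u$ whose $u$ is not yet known to be a period.
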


\begin{proof}
The proof follows an argument of Anderson and Thakur~\cite[Prop. 2.2.5]{AndThak90} with a few modifications.  Let $h\in \Omega_0$.  As $h \in \TT[y]$, we deduce that we can express $h$ uniquely as
\[
  h = \sum_{n=0}^\infty b_{n+1}t^n + (y+a_1t + a_3)\sum_{n=0}^\infty c_{n+1}t^n.
\]
Since $\Omega_0 = \bA \omega_\rho$, we have $D_t(h) = 0$, and so $\rho_t(h) = th$ and
\begin{multline*}
\sum_{n=0}^\infty \rho_t(b_{n+1})t^n + (y+a_1t + a_3)\sum_{n=0}^\infty \rho_t(c_{n+1})t^n = \rho_t (h) \\
 = t  h = \sum_{n=0}^\infty b_{n+1}t^{n+1} + (y+a_1t + a_3)  \sum_{n=0}^\infty c_{n+1}t^{n+1}.
\end{multline*}
If we set $b_0=c_0=0$, then this calculation implies that for $n \geq 0$,
\begin{equation}\label{coefeq12}
\rho_t(b_{n+1}) = b_n, \quad \rho_t(c_{n+1}) = c_n.
\end{equation}
Similarly, since $D_y(h) = (\rho_y - y)(h) = 0$, we obtain further identities for $n \geq 0$,
\[
\rho_y(c_{n+1}) = b_{n+1}.
\]
Since $|b_n|$, $|c_n|\to 0$ as $n\to \infty$, there is some $n_0>0$ such that $b_{n+1}$ and $c_{n+1}$ both lie within the radius of convergence of $\log_\rho$ for $n>n_0$.  Thus by \eqref{coefeq12}, for $n>n_0$ we have
\[
  \theta^n \log_\rho(b_n) = \theta^{n+1}\log_\rho(b_{n+1}), \quad \theta^n \log_\rho(c_n) = \theta^{n+1}\log_\rho(c_{n+1}),
\]
and just as in the proof of \cite[Prop.~2.2.5]{AndThak90} we note that the two quantities are independent of $n$.  We set
\[
  \pi := \theta^n\log_\rho(c_n), \quad \textup{any}\ n > n_0,
\]
and note that
\[
\eta\pi = \eta \theta^n\log_\rho(c_n) = \theta^n\log_\rho(\rho_y(c_n)) = \theta^n\log_\rho(b_n).
\]
Since $\pi$ is independent of $n > n_0$, we see that
\[
\exp_\rho(\pi) = \exp_\rho(\theta^n\log_\rho(c_n)) = \rho_{t^n}(c_n) = \rho_t(c_1) =c_0= 0,
\]
which implies that $\pi \in \Lambda_\rho$.  Our calculations imply that
\[
b_n = \exp_\rho\biggl(\frac{\eta\pi}{\theta^n}\biggr), \quad c_n = \exp_\rho\biggl( \frac{\pi}{\theta^n}\biggr),
\]
and thus
\[
h = G_{\pi} = E_{\eta \pi} + (y+a_1t + a_3)E_{\pi}.
\]
By Proposition~\ref{P:GpiOmega0}, we see that $\Res(h) = -\pi$, and thus $\Res(\Omega_0) \subseteq \Lambda_\rho$.  Furthermore, we have shown that $\Omega_0$ is exactly the set of all Anderson generating functions $G_{\pi}$, $\pi \in \Lambda_{\rho}$, and so Proposition~\ref{P:GpiOmega0} implies that $\Res(\Omega_0) = \Lambda_\rho$.  Since $G_{\pi} = G_{\pi'}$ if and only if $\pi = \pi'$, it is also injective.
\end{proof}

{From} the preceding developments on $\omega_\rho$, if we let
\begin{equation} \label{pirhodef}
  \pi_\rho := -\!\Res(\omega_\rho) = -\!\Res_{\Xi}(\omega_\rho \lambda),
\end{equation}
then $\Lambda_{\rho} = A\pi_\rho$ and $\omega_\rho = G_{\pi_{\rho}}$.  Furthermore we obtain a product expansion for $\pi_\rho$, which provides a presentation of $\pi_\rho$ which is distinct from earlier ones of Gekeler~\cite[\S III]{Gekeler} (see \cite[\S 7.10]{Goss}, \cite[Ex.~4.15]{LP13}, and see Thakur~\cite[\S 3]{Thakur91} for further discussion and connections with $\Gamma$-functions). This formula should also be compared with the one for $\tpi$ in~\eqref{Carlitzper}.

\begin{theorem} \label{T:pirho}
We have $\Lambda_{\rho} = A \pi_\rho$, and setting $\xi = -(m + \beta/\alpha)$,
\[
  \pi_\rho = - \frac{\xi^{q/(q-1)}}{\delta^{(1)}(\Xi)} \prod_{i=1}^\infty \frac{\xi^{q^i}}{f^{(i)}(\Xi)}.
\]
\end{theorem}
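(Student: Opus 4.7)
My plan is to derive both claims of Theorem~\ref{T:pirho} by combining the structural result Theorem~\ref{T:Resmap} (which identifies $\Lambda_\rho$ with $\Res(\Omega_0)$) with Proposition~\ref{Omegaprop} (which shows $\Omega_0 = \bA \omega_\rho$), and then performing an explicit residue calculation on the product expansion~\eqref{omegarhoprod} of $\omega_\rho$.

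First I would establish $\Lambda_\rho = A\pi_\rho$. By Proposition~\ref{Omegaprop}, every element of $\Omega_0$ has the form $a\,\omega_\rho$ for a unique $a \in \bA$. Since $\omega_\rho$ has only a simple pole at $\Xi$ among its singularities on $U$, and $a$ is regular at $\Xi$ with $a(\Xi) = \iota(a)$ (identifying $\bA$ and $A$ via $\chi$, $\iota$), one has $\Res(a\omega_\rho) = \Res_\Xi(a\omega_\rho \lambda) = a(\Xi)\Res(\omega_\rho) = -\iota(a)\pi_\rho$. By Theorem~\ref{T:Resmap}, $\Res$ maps $\Omega_0$ bijectively onto $\Lambda_\rho$, so $\Lambda_\rho = \{-\iota(a)\pi_\rho : a \in \bA\} = A \pi_\rho$.

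For the product formula, the strategy is to isolate the $i=0$ factor of the product for $\omega_\rho$, which accounts for the pole at $\Xi$, so that the remainder can be evaluated at $\Xi$. Concretely, I rewrite
\[
  \omega_\rho = \frac{\xi^{q/(q-1)}}{f} \cdot \prod_{i=1}^\infty \frac{\xi^{q^i}}{f^{(i)}}.
\]
Equivalently, the functional equation~\eqref{omegarhotwist} gives $\omega_\rho = \omega_\rho^{(1)}/f$, where $\omega_\rho^{(1)}$ has polar divisor at the points $\Xi^{(i)}$ for $i \geq 1$ and hence is regular at $\Xi$; matching this with the product rearrangement, $\omega_\rho^{(1)}(\Xi) = \xi^{q/(q-1)}\prod_{i=1}^\infty \xi^{q^i}/f^{(i)}(\Xi)$. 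Since $f$ has a simple zero at $\Xi$, we may factor the residue as
\[
  \Res_\Xi(\omega_\rho \lambda) = \omega_\rho^{(1)}(\Xi) \cdot \Res_\Xi(\lambda/f).
\]

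To finish, I compute $\Res_\Xi(\lambda/f)$. Here I recycle the residue identity established in the proof of Corollary~\ref{C:logcoef}, which shows $\Res_\Xi\bigl(\delta^{(1)}\lambda^{(1)}/f\bigr) = 1$ (this is the content of equations \eqref{lambdares} and \eqref{alphaidentity}). Since the coefficients of $\lambda = dt/(2y+a_1t+a_3)$ lie in $\F_q$, we have $\lambda^{(1)} = \lambda$; and since $\delta^{(1)}(\Xi) = \theta - \alpha^q$ is a nonzero constant (recall $\deg \alpha^q = 2q > 2 = \deg \theta$), it pulls out of the residue to give $\Res_\Xi(\lambda/f) = 1/\delta^{(1)}(\Xi)$. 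Combining these yields
\[
  \pi_\rho = -\Res_\Xi(\omega_\rho \lambda) = -\frac{\omega_\rho^{(1)}(\Xi)}{\delta^{(1)}(\Xi)} = -\frac{\xi^{q/(q-1)}}{\delta^{(1)}(\Xi)} \prod_{i=1}^\infty \frac{\xi^{q^i}}{f^{(i)}(\Xi)}.
\]

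The main subtlety I anticipate is justifying the evaluation of the shifted infinite product at $\Xi$, since $\Xi$ lies outside the closed unit disk on which the product representation for $\omega_\rho$ converges in $\TT[y]$. The justification is that $\omega_\rho$, and hence $\omega_\rho^{(1)} = f\omega_\rho$, extends meromorphically to $E \setminus \{\infty\}$; since $\omega_\rho^{(1)}$ is regular at $\Xi$ (its poles sit at $\Xi^{(i)}$ for $i \geq 1$), its value there is well-defined, and the termwise product formula for $\omega_\rho^{(1)}(\Xi)$ follows by continuity from the $\TT[y]$-convergence together with the meromorphic extension. With that point addressed, the rest of the argument is just bookkeeping of residues.
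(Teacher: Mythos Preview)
Your proposal is correct and follows essentially the same approach as the paper: both proofs identify $\Lambda_\rho = A\pi_\rho$ from Proposition~\ref{Omegaprop} and Theorem~\ref{T:Resmap}, then compute the residue of $\omega_\rho\lambda$ at $\Xi$ using the product expansion~\eqref{omegarhoprod} together with the identity~\eqref{alphaidentity} from the proof of Corollary~\ref{C:logcoef}. The only cosmetic difference is that you invoke that identity via the residue $\Res_\Xi(\delta^{(1)}\lambda/f)=1$ and the factorization $\omega_\rho=\omega_\rho^{(1)}/f$, whereas the paper applies~\eqref{x1ftwist} to rewrite $(t-\theta)/f$ as $x_1+f^{(1)}$ before evaluating; the underlying computation is identical.
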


\begin{proof}
We substitute into \eqref{omegarhoprod} and then apply \eqref{x1ftwist} to obtain
\[
\pi_\rho = \frac{-(t-\theta)}{2y + a_1t + a_3}\cdot \omega_\rho \bigg|_{\Xi}
 = -\xi^{q/(q-1)} \cdot \frac{x_1 + f^{(1)}(\Xi)}{2\eta + a_1\theta + a_3}\cdot
  \prod_{i=1}^{\infty} \frac{\xi^{q^i}}{f^{(i)}(\Xi)}.
\]
Using \eqref{alphaidentity} we arrive at the desired formula.
\end{proof}

\begin{remark}
The formula for $\pi_\rho$ can be made completely explicit, and after some calculation,
\[
  \pi_{\rho} = - \frac{\xi^{q/(q-1)}}{\theta - \alpha^q} \prod_{i=1}^{\infty}
  \left(\frac{1 - \dfrac{\theta}{\alpha^{q^i} \mathstrut }}{1 - \biggl( \dfrac{m}{m\theta - \eta}\biggr)^{q^{i \mathstrut}} \cdot \theta + \biggl( \dfrac{1}{m\theta - \eta} \biggr)^{q^{i\mathstrut}}\cdot \eta}\right).
\]
Similar to the Carlitz period, by observing that $\sgn(\xi) = -\sgn(m) = -1$ from \eqref{sgnalphabeta} and \eqref{mquotients}, we conclude that $\pi_\rho \in K_{\infty} \cdot \xi^{1/(q-1)}$ and moreover $\pi_\rho^j \in K_\infty$ if and only if $(q-1) \mid j$.
\end{remark}

\section{Applications of Anderson generating functions}\label{S:AndGenFun}

The theory of Anderson generating functions was originated by Anderson~\cite[\S 3.2]{And86} in his characterization of the uniformizability of $t$-modules in terms of rigid analytic trivializations.  As introduced in~\cite{AndThak90} the Anderson-Thakur function $\omega_C$ of \eqref{omegaC} provides a fundamental example of the utility of these generating series.   Subsequently they have been central in the study of periods, quasi-periods, $L$-series, and motivic Galois groups of Drinfeld modules and $t$-modules (e.g., see \cite{CP11}, \cite{CP12}, \cite{EP14}, \cite{Pellarin08}--\cite{Perkins14a}, \cite{Sinha97}).

In the present section we investigate applications of Anderson generating functions for our Drinfeld-Hayes module $\rho$ of \S\ref{S:DrinfeldModules}.  Inspired by Anderson and Thakur~\cite{AndThak90}, Sinha~\cite[\S 4]{Sinha97} used Anderson generating functions to give a formula for the exponential functions of certain rank~$1$ Anderson $t$-modules in terms of residues, and one of the goals of this section is to provide a version of Sinha's ``main diagram'' for $\rho$~\cite[\S 4.2.3, \S 4.6.6]{Sinha97}.  The Anderson generating functions we define provide explicit solutions of constructions of Anderson for $\exp_\rho(u)$ via dual $\bA$-motives (see \cite[\S 5.2]{HartlJuschka16}).  Although we do not expressly need Theorem~\ref{T:varepstheorem} in later sections, we feel it completes the picture for the exponential function in terms of difference equations started by $\omega_\rho$ and is of independent interest.

\begin{theorem}\label{T:varepstheorem}
Let $h \in \Omega = \{ h \in \BB \mid h^{(1)} - fh \in N \}$, where $N = \Gamma (U, \cO_E(-(V^{(1)}))) \subseteq \C_\infty[t,y]$, and denote
\[
u = -\!\Res_\Xi(h\lambda).
\]
Let $g = (\tau - f)(h) \in N$.  Then
\begin{equation}
\exp_\rho(u) = \varepsilon(g),
\end{equation}
where $\varepsilon$ is given in \eqref{varepseq}.
\end{theorem}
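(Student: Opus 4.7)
The plan is to invert the difference equation $(\tau-f)(h) = g$ directly, take residues at~$\Xi$, and recognize the resulting series using the basis of~$N$ from~\eqref{dualbasis}.  Starting from $h = (h^{(1)}-g)/f$ and iterating $N$ times yields the exact identity
\[
  h = \frac{h^{(N+1)}}{f f^{(1)} \cdots f^{(N)}} \;-\; \sum_{k=0}^{N} \frac{g^{(k)}}{f f^{(1)} \cdots f^{(k)}}
\]
as meromorphic functions on~$\cU$.  For each $j\geq 1$ the twist $f^{(j)}$ is regular and non-vanishing at~$\Xi$, since its zero~$\Xi^{(j)}$ and its pole~$V^{(j)}$ both lie outside~$\cU$; hence the only simple pole at~$\Xi$ of any term on the right comes from the lone factor~$1/f$.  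Applying $-\!\Res_\Xi(\,\cdot\,\lambda)$ to both sides and using $\Res_\Xi(\lambda/f) = 1/\delta^{(1)}(\Xi)$, which was established in the proof of Corollary~\ref{C:logcoef}, we obtain, with $u = -\!\Res_\Xi(h\lambda)$,
\[
  u = \sum_{k=0}^{N} \frac{g^{(k)}(\Xi)}{\delta^{(1)}(\Xi)\, f^{(1)}(\Xi)\cdots f^{(k)}(\Xi)} \;-\; \frac{h^{(N+1)}(\Xi)}{\delta^{(1)}(\Xi)\, f^{(1)}(\Xi)\cdots f^{(N)}(\Xi)}.
\]

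The key technical step will be to show that the remainder term tends to $0$ as $N\to\infty$.  By~\eqref{liformula} the two denominators can be rewritten as $\ell_k\,\delta^{(k+1)}(\Xi)$ and $\ell_N\,\delta^{(N+1)}(\Xi)$, respectively; since $|\delta^{(N+1)}(\Xi)| = |\theta|^{q^{N+1}}$ grows doubly exponentially while $|h^{(N+1)}(\Xi)|$ is controlled by the Gauss norm of~$h$ in the Tate algebra $\TT_\theta[y]$, the remainder ought to converge to~$0$.  I expect this to be the principal obstacle in the proof.  Granting the convergence, in the limit we find
\[
  u = \sum_{k=0}^{\infty} \frac{g^{(k)}(\Xi)}{\delta^{(1)}(\Xi)\, f^{(1)}(\Xi)\cdots f^{(k)}(\Xi)}.
\]

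To conclude, write $g$ in the basis~\eqref{dualbasis} as $g = \sum_{i=0}^{n} c_i\, \delta^{(-i+1)}\, f f^{(-1)} \cdots f^{(-i+1)}$, so that $\varepsilon(g) = \sum_{i=0}^{n} c_i^{q^i}$ by definition~\eqref{varepseq}.  Twisting,
\[
  g^{(k)} = \sum_{i=0}^{n} c_i^{q^k}\, \delta^{(k-i+1)}\, f^{(k)} f^{(k-1)} \cdots f^{(k-i+1)}.
\]
For $k<i$ the product of Frobenius twists of~$f$ contains the factor $f = f^{(0)}$, which vanishes at~$\Xi$, so these terms contribute nothing.  For $k\geq i$, the product $f^{(k-i+1)}(\Xi)\cdots f^{(k)}(\Xi)$ telescopes against the denominator $\delta^{(1)}(\Xi)\, f^{(1)}(\Xi) \cdots f^{(k)}(\Xi)$, and another application of~\eqref{liformula} simplifies the $(i,k)$-summand to $(c_i^{q^i})^{q^{k-i}}/\ell_{k-i}$.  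Reindexing $k' = k-i$ and invoking Corollary~\ref{C:logcoef},
\[
  u = \sum_{i=0}^{n}\sum_{k'=0}^{\infty} \frac{\bigl(c_i^{q^i}\bigr)^{q^{k'}}}{\ell_{k'}} = \sum_{i=0}^{n} \log_\rho\!\bigl(c_i^{q^i}\bigr).
\]
Applying the $\F_q$-linear (hence additive) map~$\exp_\rho$ termwise, using that $\exp_\rho\circ\log_\rho$ is the identity inside the radius of convergence, yields $\exp_\rho(u) = \sum_{i=0}^{n} c_i^{q^i} = \varepsilon(g)$, as required.
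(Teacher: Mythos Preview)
The convergence step is not a technical obstacle to be overcome; it is false.  Take $h=\omega_\rho\in\Omega_0\subseteq\Omega$, so that $g=0$ and $u=\pi_\rho$.  Your iteration then has empty sum, and the ``remainder'' equals
\[
  -\frac{\omega_\rho^{(N+1)}(\Xi)}{\delta^{(1)}(\Xi)\,f^{(1)}(\Xi)\cdots f^{(N)}(\Xi)} \;=\; u \;=\; \pi_\rho
\]
for \emph{every} $N$, as one sees either from~\eqref{omegarhoprod} and Theorem~\ref{T:pirho} or simply because the left-hand side $u$ is independent of $N$.  So the remainder is the nonzero period~$\pi_\rho$, not~$0$.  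The heuristic that $|h^{(N+1)}(\Xi)|$ is ``controlled by the Gauss norm of~$h$'' is the source of the error: twisting raises coefficients to the $q^{N+1}$-st power, so $|h^{(N+1)}(\Xi)|$ grows like $\lVert h\rVert^{q^{N+1}}$, i.e.\ doubly exponentially, and competes directly with the denominator.  Dually, the infinite series $\sum_k g^{(k)}(\Xi)/\bigl(\ell_k\,\delta^{(k+1)}(\Xi)\bigr)$ need not converge: by your own telescoping it equals $\sum_i\sum_{k'}(c_i^{q^i})^{q^{k'}}/\ell_{k'}$, and since $|1/\ell_{k'}|\to\infty$ (equivalently, $\log_\rho$ has a finite radius of convergence) each inner sum diverges whenever $|c_i^{q^i}|$ exceeds that radius.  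For instance with $g=\delta^{(1)}$ (so $c_0=1$) the series is $\sum_{k}1/\ell_k$, whose terms do not tend to~$0$.

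What your iteration actually shows is the exact identity $u = S_N - R_N$ for every~$N$, with no control on either piece separately; the missing ingredient is that the ``periodic part'' of~$h$ contributes an element of~$\Lambda_\rho$, which is killed by~$\exp_\rho$.  This is precisely what the paper supplies: it constructs explicit $J_{u_i}\in\Omega$ from Anderson generating functions with $(\tau-f)(J_{u_i})$ hitting each basis vector of~$N$ (Proposition~\ref{P:taufJu}), forms $J$ with $(\tau-f)(J)=g$, and then uses $h-J\in\Omega_0=\bA\,\omega_\rho$ so that $\Res_\Xi((h-J)\lambda)\in A\pi_\rho=\Lambda_\rho$.  Your approach has no substitute for this step.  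A secondary gap: even if the series converged, the final line $\exp_\rho(\log_\rho(c_i^{q^i}))=c_i^{q^i}$ again requires each $c_i^{q^i}$ to lie inside the radius of convergence of~$\log_\rho$, which is not given.
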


The Anderson generating functions $E_u$ and $G_u$ of \eqref{Eugeo} and \eqref{Gudef} are the main tools for proving this theorem.  We recall from~\eqref{DtGu} and~\eqref{DyGu} that
\[
  D_t(G_u),\ D_y(G_u) \in \C_\infty[t,y],
\]
and thus from \eqref{taufdecomp}
\[
  (\tau - f)(G_u) \in \frac{1}{t-\alpha} \cdot \C_\infty[t,y].
\]
Now $(\tau-f)(G_u)$ is nearly a polynomial, and this leads us to expect that we can use $G_u$ to manufacture a function in $\Omega$.  By~\eqref{GuV}, $G_u(V)$ need not be $0$, so we define
\begin{equation}\label{Judef}
  J_u := G_u - G_u(V).
\end{equation}
We note that $J_u(V) = 0$, and so $J_u \in \BB = \Gamma(\cU, \cO_E(-(V) + (\Xi)))$.

\begin{proposition}\label{P:taufJu}
Let $z = \exp_\rho(u)$.  For $J_u \in \BB$ as defined above, we have
\[
(\tau - f)(J_u) = (t-\alpha^q) z,
\]
and thus $J_u \in \Omega$.
\end{proposition}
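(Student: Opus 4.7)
The plan is to analyze $J_u^{(1)} - f J_u$ as a rational function on $E$, identify its divisor by pole cancellations, and then pin it down via a Riemann--Roch argument combined with a leading-term comparison at $\infty$. Once the identity $(\tau-f)(J_u) = (t - \alpha^q) z$ is established, the conclusion $J_u \in \Omega$ is immediate, since $(t - \alpha^q) z = \delta^{(1)} z$ is $z$ times the first basis element of $N$ in~\eqref{dualbasis}, and hence lies in $N$.

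First, I would locate the poles of $J_u^{(1)} - f J_u$ on $U = E \setminus \{\infty\}$. At $V$, the simple pole of $f$ is killed by the simple zero of $J_u$, while $J_u^{(1)}$ is regular there. At $\Xi$, the simple zero of $f$ cancels the simple pole of $J_u$, and again $J_u^{(1)}$ is regular. For each $\Xi^{(i)}$ with $i \geq 1$, since $J_u$ differs from $G_u$ by a constant, $\Res_{\Xi^{(i)}}(J_u \lambda) = -u^{q^i}/d_i$ by~\eqref{GuResXitwist}; Frobenius-twisting the corresponding residue of $J_u$ at $\Xi^{(i-1)}$ yields $\Res_{\Xi^{(i)}}(J_u^{(1)} \lambda) = -u^{q^i}/d_{i-1}^q$. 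The identity $d_i = f(\Xi^{(i)}) d_{i-1}^q$ from~\eqref{diformula} then gives
\[
  \Res_{\Xi^{(i)}}\bigl(J_u^{(1)} \lambda\bigr) = f\bigl(\Xi^{(i)}\bigr)\Res_{\Xi^{(i)}}(J_u \lambda) = \Res_{\Xi^{(i)}}(fJ_u \lambda),
\]
so these poles cancel. At $\infty$, $J_u^{(1)}$ has a simple pole while $fJ_u$ has a double pole, so $J_u^{(1)} - f J_u \in \cL(2(\infty))$.

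By Riemann--Roch, $\cL(2(\infty)) = \Span_{\C_\infty}\{1, t\}$, so I may write $J_u^{(1)} - f J_u = a + bt$ with $a, b \in \C_\infty$. Evaluating at $V^{(1)} = (\alpha^q, \beta^q)$: $J_u^{(1)}(V^{(1)}) = J_u(V)^q = 0$ and $f(V^{(1)}) = 0$, so $a + b\alpha^q = 0$, giving $J_u^{(1)} - f J_u = b(t - \alpha^q)$. To identify $b$, I would compare leading terms at $\infty$ in the uniformizer $w = -t/y$: the partial fraction~\eqref{Euparfrac} gives $E_u \sim -z/t$, whence $G_u \sim -yz/t = z/w$ and $J_u$ has the same leading behavior. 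Then $f J_u \sim (-1/w)(z/w) = -z/w^2$ dominates $J_u^{(1)} \sim z^q/w$, so $J_u^{(1)} - f J_u \sim z/w^2 \sim zt$, forcing $b = z$.

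The main obstacle is the residue cancellation at the $\Xi^{(i)}$, which requires carefully tracking how Frobenius twisting transports residues from $\Xi^{(i-1)}$ to $\Xi^{(i)}$ while raising the coefficient to the $q$-th power; the identity $f(\Xi^{(i)}) = d_i / d_{i-1}^q$ is precisely what makes the two expressions match. The leading-term analysis at $\infty$ then follows once one notes $f \sim -1/w$ there.
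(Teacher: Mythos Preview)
Your residue cancellation at the points $\Xi^{(i)}$ is correct and elegant, and your evaluation at $V^{(1)}$ to pin down the form $b(t-\alpha^q)$ is also fine. The gap is in the passage from ``no poles on $U$'' to ``lies in $\cL(2(\infty))$'' and in the leading-term computation at $\infty$. A priori, $J_u^{(1)} - fJ_u$ is only a rigid analytic function on $U$; showing it is holomorphic there does not make it a polynomial, since entire functions on affine varieties over $\C_\infty$ are far from all being regular. Your asymptotic $E_u \sim -z/t$ at $\infty$ presupposes that $E_u$ (hence $G_u$, $J_u$) extends meromorphically across $\infty$, but the partial fraction expansion~\eqref{Euparfrac} has poles at $\theta^{q^i}$ accumulating at $t=\infty$, so no such extension exists for $J_u$ or $fJ_u$ individually. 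The formal manipulation that produces $-z/t$ as the ``leading term'' of $E_u$ is therefore not a rigorous expansion at $\infty$, and you cannot read off the pole order or the coefficient $b$ this way.

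The paper closes this gap by a different route: it applies the decomposition $\tau - f = \delta^{-1}(D_y - (\tau+m)D_t)$ of Proposition~\ref{P:tau-f} together with the explicit polynomial identities~\eqref{DtGu}--\eqref{DyGu} to exhibit $(\tau-f)(G_u)$ directly as an element of $\delta^{-1}\C_\infty[t,y]$; see~\eqref{allcoef}. Rationality of $(\tau-f)(J_u)$ is then immediate, the degree and $\tsgn$ are read off from the explicit numerator, and only then does Riemann--Roch apply. If you want to salvage your approach, you should first invoke~\eqref{DtGu}--\eqref{DyGu} and Proposition~\ref{P:tau-f} to obtain rationality; after that your vanishing at $V^{(1)}$ gives the factor $t-\alpha^q$, and the coefficient $z$ can be read from the $t^2$ term of the numerator in~\eqref{allcoef} rather than from an asymptotic.
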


\begin{proof}
From~\eqref{GuV} we have $G_u(V) = z^q + mz$, from which we calculate that
\begin{equation}\label{taufGuV}
(\tau - f)(G_u(V)) = z^{q^2} + (m^q - f)z^q - fm z.
\end{equation}
Using \eqref{taufdecomp}, \eqref{DtGu}, and \eqref{DyGu}, we find after a fairly straightforward computation that
\begin{equation}\label{allcoef}
\begin{split}
(\tau - f)(G_u) &= \frac{1}{t-\alpha} \biggl[ \begin{aligned}[t]
  \bigl(\theta^2 + \theta t + t^2  -m((y + a_1t + a_3) &{}+\eta) + a_4 \\
   &{}+ a_2(\theta + t) - a_1\eta\bigr) z
\end{aligned}
\\
&\quad {} + \bigl(x_1(\theta + \theta^q + t) - \eta^q  - y_1m - (y + a_1t + a_3) + a_2x_1 - a_1y_1 \bigr) z^q \\
&\quad {} + \bigl(x_1^{q+1} + \theta + \theta ^{q^2} + t - y_1^q - y_2m + a_2 - a_1y_2 \bigr) z^{q^2} \\
&\quad {} + \bigl(x_1 + x_1^q + x_1^{q^2} - y_2 - y_2^q - a_1 \bigr) z^{q^3} \biggr].
\end{split}
\end{equation}
Note that \eqref{allcoef} is a rational function in $\C_\infty(t,y)$.  The highest degree term in the numerator is $z t^2$ and the denominator is $t-\alpha = \delta$.  Thus $(\tau - f)(G_u)$ has degree $2$ and $\tsgn((\tau - f)(G_u)) = z$.  Its only possible poles away from $\infty$ occur at the zeros of $\delta$, namely $\pm V$.  We also observe from \eqref{taufGuV} that $(\tau - f)(G_u(V))\in \C_\infty(t,y)$ has degree $1$ and only a single possible simple pole away from $\infty$ at $V$.  Thus $(\tau - f)(J_u) \in \C_\infty(t,y)$ has degree $2$ with $\tsgn((\tau - f)(J_u)) = z$, and it has at most simple poles at $\pm V$.

Recall from the discussion following \eqref{Gudef} that $G_u$ extends to a meromorphic function on all of $U$ with simple poles only at $\Xi\twisti$ for $i\geq 0$, and from \eqref{Judef} that $J_u$ vanishes at $V$.  Using \eqref{divf}, these facts imply that $(\tau - f)(J_u)$ is regular at the points $\pm V$ and vanishes at $V\twist$.  Thus it is a rational function of degree 2 which is regular away from $\infty$ and hence is a degree $2$ element of $\C_\infty[t,y]$.  The Riemann-Roch theorem then implies that $(\tau - f)(J_u)$ is a $\C_\infty$-multiple of $t-\alpha^q$, and the fact that $\tsgn((\tau - f)(J_u)) = z$ finishes the proof.
\end{proof}

\begin{proof}[Proof of Theorem~\ref{T:varepstheorem}.]
Let $h\in \Omega$ and let $g = h\twist - fh\in N$.  As in Proposition~\ref{P:taufJu} we set $z = \exp_\rho(u)$.  We write $g$ in terms of the basis for the dual $A$-motive $N$ from \eqref{dualbasis},
\begin{equation}\label{gequation}
g = b_0 \delta\twist + b_1\twistk{-1} \delta f + b_2\twistk{-2} \delta\twistk{-1} ff\twistk{-1} + \dots + b_r\twistk{-r} \delta\twistk{-r+1} ff\twistk{-1} \cdots f\twistk{-r+1},
\end{equation}
where $b_i\in \C_\infty$ and $r = \deg g$.  We note that we have chosen to write the coefficients of \eqref{gequation} ``pre-twisted'' because then by \eqref{varepseq},
\begin{equation} \label{varepsg}
  \varepsilon(g) = b_0 + b_1 + \cdots + b_r.
\end{equation}
Now the right-hand side of the formula in Proposition~\ref{P:taufJu}, namely $(t-\alpha^q) z = z \delta^{(1)}$, is a constant multiple of the first basis element of \eqref{dualbasis}.  Note that $f^{(-1)} J_u^{(-1)} \in \BB$ and that
\[
(\tau-f) \bigl(f\invtwist J_u\invtwist \bigr) =  (f \tau\inv)(\tau-f)(J_u)= (f \tau\inv) \bigl(\delta\twist z\bigr) = \exp_\rho(u)^{1/q} \cdot \delta f,
\]
where $\delta f$ is the second basis element in \eqref{dualbasis}.  In like manner,
$f^{(-i)} f^{(-i+1)} \cdots f^{(-1)} J_u^{(-i)} \in \BB$ and
\begin{equation}
(\tau-f) \bigl(f\twistk{-i}f\twistk{-i+1}\cdots f\invtwist J_u\twistk{-i}\bigr) = \exp_\rho(u)^{1/q^i} \cdot \delta\twistk{-i+1} f f\invtwist \cdots f\twistk{-i+1},
\end{equation}
which is a multiple of the $(i+1)$-st basis element in \eqref{dualbasis}.
Now pick any $u_i \in \C_\infty$ so that $\exp_\rho(u_i) = b_i$, and define the function
\begin{equation}\label{gequation2}
J := J_{u_0} + f\invtwist J_{u_1}\invtwist + \dots + f\twistk{-r}f\twistk{-r+1}\cdots f\invtwist J_{u_r}\twistk{-r} \in \BB.
\end{equation}
Then by Proposition~\ref{P:taufJu} and the preceding developments,
\[
(\tau - f)(h - J) = 0,
\]
and so $h-J \in \Omega_0$.  Thus, by Proposition~\ref{Omegaprop}, there is some $a \in A$ so that
\[
h - J = \oa \omega_\rho = a(t,y) \omega_\rho.
\]
We also calculate the residue of an arbitrary term of $J$ at $\Xi$ by observing
\begin{align*}
\Bigl( \Res_\Xi \bigl(f\twistk{-i}f\twistk{-i+1} \cdots f\invtwist J_{u_i}\twistk{-i}\lambda \bigr) \Bigr)^{q^i}
&= \Res_{\Xi\twisti } (f f\twistk{1}\cdots f\twistk{i-1} J_{u_i}\lambda )\\
&= \bigl(f f\twistk{1}\cdots f\twistk{i-1}\big|_{\Xi\twisti} \bigr) \cdot  \Res_{\Xi\twisti }(J_{u_i}\lambda ) \\
&= d_i \cdot \Res_{\Xi^{(i)}} (J_{u_i} \lambda),
\end{align*}
where in the last equality we have used~\eqref{diformula}.  Using \eqref{GuResXitwist} we see that
\[
\Res_\Xi \bigl(f\twistk{-i}f\twistk{-i+1}\cdots f\invtwist J_{u_i}\twistk{-i}\lambda\bigr) = -u_i,
\]
and so by the above calculations and \eqref{pirhodef},
\[
u = -\!\Res_\Xi(h\lambda) = -\!\Res_\Xi ((J+a\omega_\rho) \lambda) = u_0 + u_1 + \dots +u_r + a {\pi}_\rho.
\]
Therefore, $\exp_\rho(u) = \exp_\rho(u_0 + u_1 + \dots + u_r) = b_0 + b_1 + \dots + b_r = \varepsilon(g)$ by \eqref{varepsg}.
\end{proof}

\section{Deformation and interpolation of reciprocal sums} \label{S:sums}

A classic result of Carlitz~\cite[Eq.~(9.09)]{Carlitz35} is a formula for the sum of reciprocals of monic polynomials of fixed degree,
\begin{equation} \label{Carlitzsum}
  \sum_{\substack{a \in \F_q[\theta]_+ \\ \deg a = i}} \frac{1}{a} = \frac{1}{(\theta - \theta^q)(\theta - \theta^{q^2}) \cdots (\theta - \theta^{q^i})}.
\end{equation}
These sums are intertwined with values of the Carlitz zeta function and Carlitz logarithms.  Pellarin~\cite{Pellarin12} discovered deformation formulas for these sums in the process of proving results on his now eponymous $L$-functions.  Further deformations and other related sums have been studied in \cite{AnglesPellarin14}, \cite{AnglesPellarin15}, \cite{AnglesSimon}, \cite{Perkins14a}, \cite{Perkins14b}, among other sources.

In~\cite{Thakur92}, Thakur initiated the study of reciprocal sums over more general rings $A$ and deduced formulas by pursuing the methods of Carlitz in the context of Drinfeld-Hayes modules.  However, Thakur discovered that there were some significant differences from the $\F_q[\theta]$ case and that the relationship with Drinfeld logarithms was especially subtle.  Anderson~\cite{And94} shed additional light on these matters in proving his first log-algebraicity results (see~\S\ref{S:LogAlg}).

In this section we investigate deformations and interpolations of reciprocal sums over $A = \F_q[\theta,\eta]$ and also over prime ideals of $A$, as precursors to investigations of Pellarin $L$-series in \S\ref{S:LSeries}.  Throughout we fix a non-zero prime ideal $\fp \subseteq A$, to which there is an associated point $P = (t_0,y_0) \in E(\F_q)$ such that $\fp = (\theta-t_0, \eta-y_0)$.  We let $\fp_+$ denote its monic elements and similarly $\fp_{i+} = \fp \cap A_{i+}$.  We define reciprocal sums
\begin{equation}
S_i  = \sum_{a\in A_{i+}} \frac{1}{a}, \quad S_{\fp,i} = \sum_{a\in \fp_{i+}} \frac{1}{a},
\end{equation}
and the main result of this section (Theorem~\ref{T:Siformula}) provides explicit interpolation formulas for $S_i$ and $S_{\fp,i}$ in terms of the shtuka function $f$.

Before proving our main results we have the following lemma, the first part of which is due to Thakur. We observe first that as in~\eqref{AFqbasis}, we can find an $\F_q$-basis for $\fp$,
\begin{equation} \label{pFqbasis}
  \fp = \Span_{\F_q} \bigl( (\theta - t_0)^j(\eta-y_0)^k \mid j \geq 0,\ k \in \{ 0,1 \},\ (j,k) \neq (0,0) \bigr),
\end{equation}
from which it follows that
\[
  \fp_{0+} = \fp_{1+} = \emptyset, \quad \fp_{2+} = \{ \theta - t_0 \}, \quad
  \fp_{3+} = \{ \eta - y_0 + c(\theta-t_0) \mid c \in \F_q \},
\]
and so on.  In particular, $S_{\fp,0} = S_{\fp,1} = 0$ and $S_{\fp,2} = 1/(\theta - t_0)$.

\begin{lemma} \label{L:SiDiprod}
For $i \geq 0$, let $D_i = \prod_{a \in A_{i+}} a$, and let $D_{\fp,i} = \prod_{a \in \fp_{i+}} a$.
\begin{enumerate}
\item[(a)] \textup{(Thakur~\cite[\S I]{Thakur92})} For $i \geq 2$,
\[
  S_i = (-1)^{i-1} \frac{(D_0 D_2D_3 \cdots D_{i-2}D_{i-1})^{q-1}}{D_i}.
\]
\item[(b)] For $i \geq 2$,
\[
  S_{\fp,i} = (-1)^{i} \frac{(D_{\fp,2}D_{\fp,3} \cdots D_{\fp,i-1})^{q-1}}{D_{\fp,i}},
\]
where the in the case $i=2$ the product in the numerator is empty.
\end{enumerate}
\end{lemma}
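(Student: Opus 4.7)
The plan is to prove both parts by the same Carlitz-style argument, with the key structural input being that $A_{i+}$ is an affine coset of the $\F_q$-vector space $A_{<i}$ of elements of $A$ of degree $<i$, and likewise $\fp_{i+}$ is a coset of $\fp_{<i}$, both finite-dimensional by Riemann--Roch on~$E$. First, fix any $a_0 \in A_{i+}$ and form the additive ($\F_q$-linear) polynomial
\[
\phi(X) = \prod_{m \in A_{<i}}(X-m) = X^{q^{i-1}} + c_{i-2}X^{q^{i-2}} + \cdots + c_1 X^q + c_0 X,
\]
which only has $q$-th-power terms in $X$ because its zero set is an $\F_q$-subspace. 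Additivity $\phi(X-a_0) = \phi(X) - \phi(a_0)$, combined with $\phi(a_0) = \prod_m(a_0+m) = D_i$ (valid because $-A_{<i} = A_{<i}$), then yields
\[
\prod_{a \in A_{i+}}(X-a) = \phi(X) - D_i.
\]

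Next, the standard elementary-symmetric-function identity $\sum 1/r_j = -(\text{coef.\ of }X)/(\text{const.\ term})$ applied to this polynomial gives $S_i = c_0/D_i$ directly. To evaluate $c_0$, observe that $\phi(X)/X = \prod_{m \neq 0}(X-m)$ has constant term $\prod_{m \in A_{<i}\setminus\{0\}} m$ (the sign $(-1)^{q^{i-1}-1}$ collapses in every characteristic). The idea is then to partition $A_{<i}\setminus\{0\}$ by degree $j \in \{0,2,3,\dots,i-1\}$ and sign $c \in \F_q^\times$, so that each nonzero element is uniquely $ca$ with $a \in A_{j+}$. Applying the Wilson-type identity $\prod_{c \in \F_q^\times} c = -1$ then gives
\[
c_0 = \prod_j (-1)^{|A_{j+}|} D_j^{q-1} = (-1)^N (D_0 D_2 D_3 \cdots D_{i-1})^{q-1}, \qquad N = 1+q+\cdots+q^{i-2}.
\]
Part~(b) follows from the identical recipe with $\psi(X) = \prod_{m \in \fp_{<i}}(X-m)$ in place of $\phi$, $D_{\fp,j}$ in place of $D_j$, and $j$ restricted to $\{2,3,\dots,i-1\}$ (since $\fp_{0+} = \fp_{1+} = \emptyset$); the resulting sign exponent is $N' = 1 + q + \cdots + q^{i-3}$, which is the empty sum when $i=2$, consistent with the direct computation $S_{\fp,2} = 1/(\theta-t_0)$.

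The main obstacle I expect is the parity verification that $(-1)^N$ and $(-1)^{N'}$ equal the $(-1)^{i-1}$ and $(-1)^i$ claimed in the lemma. In characteristic~$2$ every sign is trivial, so there is nothing to check. In odd characteristic $q$ is odd, so $q^k$ is odd for every $k \geq 0$; hence the parities of $N$ and $N'$ equal the respective numbers of summands, namely $i-1$ and $i-2$, and the congruence $i-2 \equiv i \pmod 2$ delivers the claimed exponent in~(b). A minor additional point is to justify that $\phi$ and $\psi$ are genuinely additive polynomials --- equivalently, that their root sets are $\F_q$-vector subspaces --- which is exactly what makes the translation identity $\phi(X-a_0) = \phi(X) - \phi(a_0)$ available.
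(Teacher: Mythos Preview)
Your proposal is correct and follows essentially the same argument as the paper's proof. Both form the $\F_q$-linear polynomial whose roots are the elements of $A_{<i}$ (resp.\ $\fp_{<i}$), use additivity to identify $\prod_{a\in A_{i+}}(X-a)$ with $\phi(X)-D_i$, extract $S_i$ as $c_0/D_i$, and evaluate $c_0$ by partitioning the nonzero elements by degree and sign using $\prod_{c\in\F_q^\times}c=-1$; the only cosmetic difference is that the paper phrases the extraction via logarithmic derivatives rather than your direct Vi\`ete identity, and it writes out part~(b) while citing Thakur for part~(a), whereas you do the reverse.
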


\begin{proof}
Observe that the lemma implies that for $i \geq 2$, $S_i \neq 0$ and $S_{\fp,i} \neq 0$.  The proofs of (a) and (b) are similar, and part (a) can be found in~\cite[Eqs.~(15), (19)]{Thakur92}.  We include the proof of (b) for completeness.  Checking the case of $S_{\fp,2} = 1/(\theta-t_0)$ is immediate.  For $i \geq 3$, let $z$ be an independent variable, and define the polynomial
\[
  e_i(z) = \prod_{\substack{a \in \fp \\ \deg a < i}} (z - a) \in A[z],
\]
and as the zero set of $e_i$ is an $\F_q$-vector space it follows that $e_i(z)$ is an $\F_q$-linear polynomial, $e_i(z) = B_0 z + B_1z^q + \cdots$.  We see that
\begin{equation} \label{B0prod}
  B_0 = \prod_{\substack{a \in \fp \\ \deg a < i, \, a \neq 0}} (-a)
  = \prod_{j=0}^{i-1}\: \prod_{\substack{a \in \fp_{j+}\\ c \in \F_q^{\times}}} ca
  = (-1)^{i-2} \bigl( D_{\fp,2} \cdots D_{\fp,i-1} \bigr)^{q-1}.
\end{equation}
Using \eqref{pFqbasis}, we choose $\kappa_i \in \fp_{i+}$ of degree $i$, and certainly $e_i(\kappa_i) = D_{\fp,i}$.  This implies that $e_i(z-\kappa_i) = e_i(z) - D_{\fp,i}$, and so
\[
  1 - \frac{e_i(z)}{D_{\fp,i}} = -\frac{e_i(z-\kappa_i)}{D_{\fp,i}} = -\frac{1}{D_{\fp,i}} \prod_{a \in \fp_{i+}} (z - a).
\]
If we take logarithmic derivatives with respect to $z$ and compare constant terms, we see that $B_0/D_{\fp,i} = S_{\fp,i}$, and the result follows from~\eqref{B0prod}.
\end{proof}

Thakur~\cite[Thm.~IV]{Thakur92} obtained another formula for $S_i$ in terms of $\ell_i$, $f_i$, and $g_i$ (see \eqref{liformula} and \eqref{figi}), but we determine new formulas by investigating deformations of $S_i$ and $S_{\fp,i}$, inspired by work on $\F_q[\theta]$ of Angl\`{e}s, Pellarin, and Simon~\cite{AnglesPellarin14}, \cite{AnglesSimon}.
We define functions on $E$ in $K[t,y]$ by
\[
\cS_i(t,y) = \sum_{a\in A_{i+}} \frac{\chi(a)}{a} = \sum_{a\in A_{i+}} \frac{a(t,y)}{a(\theta,\eta)},
\quad
\cS_{\fp,i}(t,y) = \sum_{a\in \fp_{i+}} \frac{\chi(a)}{a} = \sum_{a\in \fp_{i+}} \frac{a(t,y)}{a(\theta,\eta)}.
\]
In Proposition~\ref{P:SiSpi} we obtain product formulas for $\cS_i$ and $\cS_{\fp,i}$, and the main tool is a version of a lemma of Simon for $\bA$ (see \cite[Lem.~4]{AnglesPellarin14}).  Let $t_1, \dots, t_s$ be variables independent from $t$, and choose variables $y_j$ so that $t_j$ and $y_j$ satisfy the defining equation of $E$ in~\eqref{Eeq}.

\begin{lemma} \label{L:SimonslemmaforA}
Let $s \geq 1$ and $i \geq 2$.
\begin{enumerate}
\item[(a)] Define
\[
\cT_{i,s}(t_1,y_1, \dots, t_s,y_s) = \sum_{a\in A_{i+}} a(t_1,y_1)a(t_2,y_2)\cdots a(t_s,y_s) \in \F_q[t_1, y_1, \dots, t_s, y_s].
\]
Then $\cT_{i,s} = 0$ if and only if $s< (i-1)(q-1)$.
\item[(b)] Define
\[
\cT_{i,s}'(t_1,y_1, \dots, t_s,y_s) = \sum_{a\in \fp_{i+}} a(t_1,y_1)a(t_2,y_2)\cdots a(t_s,y_s) \in \F_q[t_1, y_1, \dots, t_s, y_s].
\]
Then $\cT_{i,s}' = 0$ if and only if $s < (i-2)(q-1)$.
\end{enumerate}
\end{lemma}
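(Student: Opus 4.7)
The plan is to parametrize each summation set as an affine space over $\F_q$, expand the products using multilinearity, and exploit the orthogonality
\[
\sum_{c\in\F_q} c^n = \begin{cases} -1 & \text{if } n \in (q-1)\Z_{>0}, \\ 0 & \text{for all other } n\geq 0. \end{cases}
\]
For part (a), using the $\F_q$-basis \eqref{AFqbasis}, every $a\in A_{i+}$ has a unique presentation $a = a_0 + c_1 e_1 + \cdots + c_{i-1}e_{i-1}$ with $c_l \in \F_q$, where $a_0$ is the (common) leading basis monomial in degree $i$ and $e_1,\ldots,e_{i-1}$ are the basis monomials of degrees $0, 2, 3, \ldots, i-1$ (exactly $i-1$ of them). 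Substituting and exchanging sums,
\[
\cT_{i,s} = \sum_{\mathbf{j}\in\{0,\ldots,i-1\}^s} \Bigl(\prod_{l=1}^{i-1}\sum_{c_l\in\F_q} c_l^{n_l(\mathbf{j})}\Bigr) \prod_{k=1}^s e_{j_k}(t_k, y_k),
\]
where $n_l(\mathbf{j}) = \#\{k : j_k = l\}$ and $e_0 := a_0$. Orthogonality forces each $n_l$ ($l\geq 1$) to be a positive multiple of $q-1$, whence $s\geq(i-1)(q-1)$ is necessary for a non-trivial contribution; for $s<(i-1)(q-1)$ the sum is empty and $\cT_{i,s}=0$. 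Part (b) is identical once one reads off from \eqref{pFqbasis} that $\fp_{i+}$ is an affine $\F_q$-space of dimension $i-2$ (one less than $A_{i+}$, since membership in $\fp$ imposes a single linear condition), yielding the tightened bound $s<(i-2)(q-1)$.

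The non-vanishing direction proceeds by upward induction on $s$. All $a\in A_{i+}$ share the same leading monomial $M(t_1,y_1)$ in the weighted grading $\deg t = 2$, $\deg y = 3$, and extracting the top weighted-degree part in $(t_1, y_1)$ yields
\[
\cT_{i,s}(t_1,y_1,\ldots,t_s,y_s) = M(t_1,y_1)\cdot\cT_{i,s-1}(t_2,y_2,\ldots,t_s,y_s) + (\text{strictly lower weighted degree in }(t_1,y_1)).
\]
Hence $\cT_{i,s-1}\not\equiv 0 \implies \cT_{i,s}\not\equiv 0$, and both parts reduce to their respective base cases $s = (i-1)(q-1)$ and $s = (i-2)(q-1)$.

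The main obstacle is then the base case. Here the expansion reduces to
\[
\cT_{i,(i-1)(q-1)} = (-1)^{i-1}\!\!\sum_{\substack{\sigma:[s]\to\{1,\ldots,i-1\}\\ |\sigma^{-1}(l)|=q-1\ \forall l}} \prod_{k=1}^s e_{\sigma(k)}(t_k,y_k),
\]
which is, up to sign, the coefficient of $z_1^{q-1}\cdots z_{i-1}^{q-1}$ in $\prod_{k=1}^s L_k(z)$ with $L_k(z) := \sum_l e_l(t_k,y_k)z_l$. The diagonal substitution $(t_k,y_k)=(\theta,\eta)$ can fail because the multinomial $\binom{s}{q-1,\ldots,q-1}$ may vanish modulo $p$, so instead I would induct on $i$: split the product into its first $(i-2)(q-1)$ factors and its last $q-1$ factors, extract the target coefficient via convolution, and isolate the contribution indexed by $(n_0,n_1,\ldots,n_{i-2}) = (0,q-1,\ldots,q-1)$ using the fact that $e_1 \equiv 1$ identically on $E$. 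This pairs the inductive hypothesis $\cT_{i-1,(i-2)(q-1)}\not\equiv 0$ on the first block against the pure $z_{i-1}^{q-1}$-monomial from the second, so that a generic specialization of the last $q-1$ points makes the cross-terms vanish. The induction bottoms out at $i=2$, which is the direct one-line computation $\cT_{2,q-1} = \sum_{c\in\F_q}\prod_{k=1}^{q-1}(t_k+c) = -1$, and part (b) follows from the same three-step argument adapted via \eqref{pFqbasis}.
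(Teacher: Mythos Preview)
Your vanishing argument is identical to the paper's. For the non-vanishing direction, however, you are working much harder than necessary, and your inductive scheme on $i$ is not cleanly justified: the convolution/``generic specialization'' step is only sketched, and the tuple $(n_0, n_1, \ldots, n_{i-2}) = (0, q-1, \ldots, q-1)$ does not parse against your earlier notation (those entries sum to $(i-2)(q-1)$, not $q-1$).

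The key observation you are missing is that in part~(a) the basis elements $e_0, e_1, \ldots, e_{i-1}$ are genuine \emph{monomials} in $t, y$. Hence each tuple $\mathbf{j} = (j_1, \ldots, j_s)$ contributes a single monomial $\prod_k e_{j_k}(t_k, y_k)$ in the polynomial ring $\F_q[t_1, y_1, \ldots, t_s, y_s]$, and distinct tuples give distinct monomials because the variable sets $(t_k, y_k)$ are disjoint. Therefore exhibiting any one $\mathbf{j}$ with nonzero inner coefficient already forces $\cT_{i,s} \neq 0$: there is nothing for it to cancel against. Your multinomial-coefficient worry is a red herring---it would only arise after specializing all the $(t_k, y_k)$ to a common point, which the paper never does. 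Concretely, for $s \geq (i-1)(q-1)$ one takes $n_1 = \cdots = n_{i-1} = q-1$ and sends the remaining $s - (i-1)(q-1)$ indices to the leading term $e_0$; the coefficient is $(-1)^{i-1}$ and that single monomial survives. No induction on $s$ or on $i$ is needed. For part~(b) the basis elements $(\theta - t_0)^a(\eta - y_0)^b$ are no longer monomials, but their leading monomials $t^a y^b$ are still pairwise distinct by degree, so the same exhibited tuple produces a top-degree monomial that no other term can reach.
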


\begin{proof}
(cf.~\cite[Lem.~4]{AnglesPellarin14}) For $j = 0$ and $j \geq 2$, let $\kappa_j$ be the unique monomial in $t$ and $y$ from~\eqref{AFqbasis} of degree $j$, so that
\[
  \cT_{i,s} = \sum_{c_0, c_2, \dots, c_{i-1} \in \F_q} \prod_{r=1}^s \bigl(\kappa_i(t_r,y_r) + c_{i-1} \kappa_{i-1}(t_r,y_r) + \cdots + c_2 \kappa_{2}(t_r,y_r) + c_0 \bigr).
\]
Each term in this expanded product has the form
\[
  C_{j_1, \dots, j_s} \cdot \kappa_{j_1}(t_1,y_1) \cdots \kappa_{j_s}(t_s,y_s), \quad 0 \leq j_r \leq i, \ j_r \neq 1,
\]
where
\[
  C_{j_1, \dots, j_s} = \sum_{c_0, c_2, \dots, c_{i-1} \in \F_q} c_0^{\lambda_0} c_2^{\lambda_2} \cdots c_{i-1}^{\lambda_{i-1}}
\]
and $\lambda_j = \#\{ r \mid j_r = j \}$.  Now $\sum_j \lambda_j = s - \#\{ r \mid j_r = i \} \leq s$.  If $s < (i-1)(q-1)$, then for any $j_1, \dots, j_s$, at least one of $\lambda_0$, $\lambda_2, \dots, \lambda_{i-1}$ must be $< q-1$, from which it follows that the corresponding coefficient $C_{j_1, \dots, j_s}$ must be $0$.  Thus $\cT_{i,s} = 0$. On the other hand, if $s \geq (i-1)(q-1)$, then let $j_1 = \cdots = j_{q-1} = 0$; $j_{q} = \cdots = j_{2q-2} = 2$; and so on $j_{(i-2)(q-1)+1} = \cdots = j_{(i-1)(q-1)} = i-1$; and let $j_{(i-1)(q-1)+1} = \cdots = j_s = i$.  In this case $\lambda_0 = \lambda_2 = \cdots = \lambda_{i-1} = q-1$, and so $C_{j_1, \dots, j_s} \neq 0$, providing a non-zero term in $\cT_{i,s}$.  This proves (a).

For part (b), when $i=2$, we see that $\cT_{2,s} = (t_1-t_0) \cdots (t_s-t_0)$, which is never zero.  When $i \geq 3$, the rest of the proof is similar to (a), but instead we now set $\kappa_j$, $j \geq 2$, to be the unique monomial of degree $j$ in \eqref{pFqbasis}.  Then
\[
  \cT_{i,s}' = \sum_{c_2, \dots, c_{i-1} \in \F_q} \prod_{r=1}^s \bigl(\kappa_i(t_r,y_r) + c_{i-1} \kappa_{i-1}(t_r,y_r) + \cdots + c_2 \kappa_{2}(t_r,y_r) \bigr).
\]
From here the proof is the same as for $\cT_{i,s}$, noting that the sum is now over $i-2$ terms instead of $i-1$.
\end{proof}

\begin{proposition}\label{P:Sidiv}
As functions on $E$ we have the following equalities of divisors.
\begin{enumerate}
\item[(a)] For $i \geq 2$, $\divisor(\cS_i) = (\Xi) +( \Xi\twist) + \dots +  (\Xi\twistk{i-2}) + (V\twistk{i-1}-V) - i(\infty)$.
\item[(b)] For $i \geq 3$, $\divisor(\cS_{\fp,i}) = (\Xi) + (\Xi\twist) + \dots + (\Xi\twistk{i-3}) + (V\twistk{i-2} - V - P) + (P) - i(\infty)$.
\end{enumerate}
\end{proposition}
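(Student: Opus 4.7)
The plan is to compute $\divisor(\cS_i)$ by locating the pole at $\infty$, exhibiting $i-1$ explicit zeros at Frobenius twists of $\Xi$, and then invoking Abel-Jacobi on the elliptic curve $E$ to pin down the last remaining zero. For the pole: since each $a(t,y)$ with $a \in A_{i+}$ lies in $\cL(i(\infty))$ with coefficient $1$ at the unique monic degree-$i$ monomial $\kappa_i$, we have $\cS_i \in \cL(i(\infty))\otimes_{\F_q}K$ with leading coefficient $S_i$. Lemma~\ref{L:SiDiprod}(a) gives $S_i \neq 0$, so $\cS_i$ has pole of order exactly $i$ at $\infty$; the corresponding fact for $\cS_{\fp,i}$ (with $i \geq 3$) comes from Lemma~\ref{L:SiDiprod}(b).

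The hard part will be to show $\cS_i(\Xi\twistk{j}) = 0$ for $j = 0,\ldots,i-2$. Since each $a \in A$ has $\F_q$-coefficients as a polynomial in $t, y$, one has $a(\Xi\twistk{j}) = a(\theta,\eta)^{q^j}$, so
\[
  \cS_i(\Xi\twistk{j}) = \sum_{a \in A_{i+}} a^{q^j - 1}.
\]
For $j = 0$ this is $|A_{i+}| = q^{i-1}$, which vanishes in $K$ for $i \geq 2$. For $1 \leq j \leq i - 2$, the key observation is the identity $q^j - 1 = \sum_{r=0}^{j-1}(q-1)q^r$, which realizes $q^j - 1$ as a sum of exactly $(q-1)j$ powers of $q$. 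This identifies $\sum_a a^{q^j-1}$ with the specialization of Simon's polynomial $\cT_{i,(q-1)j}$ at the tuple of $(q-1)j$ arguments consisting of $q-1$ copies of each $\Xi\twistk{r}$ for $0 \leq r \leq j-1$. Since $(q-1)j < (q-1)(i-1)$ precisely when $j \leq i-2$, Lemma~\ref{L:SimonslemmaforA}(a) forces $\cT_{i,(q-1)j}$ to vanish identically, and hence every specialization vanishes.

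To conclude, one notes that the proposed divisor $\sum_{j=0}^{i-2}(\Xi\twistk{j}) + (V\twistk{i-1} - V) - i(\infty)$ is principal on $E$: Frobenius-twisting \eqref{Vdef} gives $\Xi\twistk{j} = V\twistk{j} - V\twistk{j+1}$, so the telescoping sum $\sum_{j=0}^{i-2} \Xi\twistk{j} = V - V\twistk{i-1}$ cancels the remaining zero on $E$. By Riemann-Roch on $E$, the subspace of $\cL(i(\infty))$ vanishing at $\Xi, \Xi\twist, \ldots, \Xi\twistk{i-2}$ is one-dimensional over $K$, and both $\cS_i$ and a function realizing the proposed divisor lie in this line; since $\cS_i$ is nonzero, they agree up to a scalar, and their divisors coincide.

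For part (b), the same strategy applies with small modifications: the leading coefficient $S_{\fp,i}$ is nonzero by Lemma~\ref{L:SiDiprod}(b); $\cS_{\fp,i}(\Xi) = |\fp_{i+}| = q^{i-2} \equiv 0$ in $K$ for $i \geq 3$; the Simon-type specialization now applies via Lemma~\ref{L:SimonslemmaforA}(b), whose threshold is $(q-1)(i-2)$, yielding vanishing at $\Xi\twistk{j}$ for $1 \leq j \leq i-3$; and $\cS_{\fp,i}(P) = 0$ holds automatically since every $a \in \fp$ vanishes at $P$. This supplies $i-1$ zeros, and Abel-Jacobi identifies the last remaining zero as $V\twistk{i-2} - V - P$.
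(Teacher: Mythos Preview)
Your proposal is correct and follows essentially the same approach as the paper: use Lemma~\ref{L:SiDiprod} to pin down the pole order at $\infty$, specialize $\cS_i$ at $\Xi^{(j)}$ and recognize the result as a specialization of $\cT_{i,(q-1)j}$ so that Lemma~\ref{L:SimonslemmaforA} forces vanishing for $j \leq i-2$ (resp.\ $j \leq i-3$ for $\cS_{\fp,i}$, together with the obvious zero at $P$), and then use that principal divisors on $E$ sum to zero to locate the remaining simple zero. Your separate treatment of $j=0$ via the cardinality count $|A_{i+}| = q^{i-1} \equiv 0$ is in fact cleaner than the paper's, which folds $j=0$ into the Simon-lemma argument where the index $s$ would formally be nonpositive; and your Riemann--Roch dimension count (the space $\cL\bigl(i(\infty) - \sum_{j=0}^{i-2}(\Xi^{(j)})\bigr)$ has degree~$1$, hence dimension~$1$) is an equivalent way of phrasing the paper's observation that the exhibited zeros must be simple and the last zero is forced.
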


\begin{proof}
Let $i \geq 2$.  By Lemma~\ref{L:SiDiprod}(a), we see that $\tsgn(\cS_i) = S_i$ and moreover that $\deg(\cS_i) = i$.  We observe that for $j\geq 0$,
\begin{align*}
\cS_i \bigl(\Xi\twistk j \bigr) = \sum_{a\in A_{i+}} a(\theta,\eta)^{q^j-1} &= \sum_{a\in A_{i+}} a^{(q-1)(q^{j-1}+q^{j-2}+ \dots + 1)} \\
&= \cT_{i,(j-1)(q-1)}(\theta^{q^{j-1}},\eta^{q^{j-1}},\dots,\theta^{q^{j-1}},\eta^{q^{j-1}}, \dots, \theta,\eta,\dots,\theta,\eta),
\end{align*}
where each pair $\theta^{q^{j-k}}$, $\eta^{q^{j-k}}$ in the last line occurs $q-1$ times.  By Lemma~\ref{L:SimonslemmaforA}(a), we see that the above sum vanishes for $0\leq j \leq i-2$.  Now $\cS_i$ is regular on all of $U$ and has a single pole of order $i$ at $\infty$.  Since the induced sum on $E$ of the divisor of $\cS_i$ must be trivial, it follows that the zeros at $\Xi^{(j)}$, $0 \leq j \leq i-2$, must be simple and that $\cS_i$ has another simple zero at $-\Xi - \Xi^{(1)} - \cdots - \Xi^{(i-2)} = V^{(i-1)} - V$ (see \eqref{Vdef}).

Similarly for $i \geq 3$, Lemma~\ref{L:SiDiprod}(b) implies that $\tsgn(\cS_{\fp,i}) = S_{\fp,i}$ and that $\deg(\cS_{\fp,i}) = i$.  Using Lemma~\ref{L:SimonslemmaforA}(b), we see that $\cS_{\fp,i}(\Xi^{(j)}) = 0$ for $0 \leq j \leq i-3$.  We also have $\cS_{\fp,i}(P) = 0$.  The rest follows easily as in the previous paragraph using~\eqref{Vdef}.
\end{proof}

We also have $\divisor(\cS_{\fp,2}) = (-P) + (P) - 2(\infty)$, which fits in with Proposition~\ref{P:Sidiv}(b).  Let $\{g_i\}_{i=2}^\infty$ be the sequence of linear functions with $\tsgn(g_i)=1$ and divisor
\begin{equation}\label{divgi}
\divisor(g_i) = (V^{(i-1)} - V) + (-V^{(i-1)}) + (V) - 3( \infty),
\end{equation}
and let $\{g_{\fp,i}\}_{i=2}^\infty$ be the sequence of functions with $\tsgn(g_{\fp,i}) = 1$ and divisor
\begin{equation}\label{divgfpi}
\divisor(g_{\fp,i}) = (V^{(i-2)} - V -P) + (-V^{(i-2)}) + (V) + (P) - 4(\infty).
\end{equation}
By~\eqref{fdef}, \eqref{nudiv}, and Proposition~\ref{P:Sidiv}, we obtain the the following proposition and its corollary by comparing divisors and leading terms.

\begin{proposition} \label{P:SiSpi}
For $i \geq 2$, we have the following equalities in $H(t,y)$.
\begin{enumerate}
\item[(a)] $\cS_i = S_i \cdot \dfrac{g_i}{\nu^{(i-1)}} \cdot f f^{(1)} \cdots f^{(i-1)} \vphantom{\bigg|}$.
\item[(b)] $\cS_{\fp,i} = S_{\fp,i} \cdot \dfrac{g_{\fp,i}}{\nu^{(i-2)}} \cdot f f^{(1)} \cdots f^{(i-2)} \vphantom{\bigg|}$.
\end{enumerate}
\end{proposition}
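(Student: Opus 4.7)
The plan is to prove each identity by the divisor-plus-sign method: each side is a rational function on $E$, and showing the two sides have identical divisor and identical leading coefficient $\tsgn$ forces them to be equal.

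For~(a), the left-hand side has divisor given by Proposition~\ref{P:Sidiv}(a) and satisfies $\tsgn(\cS_i) = S_i$ by Lemma~\ref{L:SiDiprod}(a). On the right-hand side the scalar $S_i$ only affects the sign, so the task is to compute
\[
\divisor\biggl( \frac{g_i}{\nu^{(i-1)}}\cdot f f^{(1)} \cdots f^{(i-1)} \biggr).
\]
First I would telescope the shtuka product, using~\eqref{divf}, to obtain
\[
\sum_{j=0}^{i-1} \divisor\bigl(f^{(j)}\bigr) = \bigl(V^{(i)}\bigr) - (V) + \sum_{j=0}^{i-1}\bigl(\Xi^{(j)}\bigr) - i(\infty).
\]
Next, add $\divisor(g_i)$ from~\eqref{divgi} and subtract $\divisor(\nu^{(i-1)})$, which by twisting~\eqref{nudiv} equals $(V^{(i)}) + (-V^{(i-1)}) + (\Xi^{(i-1)}) - 3(\infty)$. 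The contributions $(V^{(i)})$, $(-V^{(i-1)})$, $(V)$, and $(\Xi^{(i-1)})$ all cancel in pairs, leaving exactly
\[
(V^{(i-1)} - V) + (\Xi) + (\Xi^{(1)}) + \cdots + (\Xi^{(i-2)}) - i(\infty) = \divisor(\cS_i).
\]
Since $\tsgn(g_i) = \tsgn(\nu) = \tsgn(f^{(j)}) = 1$ by construction, the right-hand side has $\tsgn$ equal to $S_i$, matching the left, and the two sides coincide.

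Part~(b) proceeds identically, with $g_{\fp,i}$ and $\divisor(g_{\fp,i})$ from~\eqref{divgfpi} in place of $g_i$ and~\eqref{divgi}, $\nu^{(i-2)}$ in place of $\nu^{(i-1)}$, the shorter telescoping product $ff^{(1)}\cdots f^{(i-2)}$, and Proposition~\ref{P:Sidiv}(b) together with Lemma~\ref{L:SiDiprod}(b) giving the left-hand divisor and sign. The additional $(P)$ and $(-V^{(i-2)})$ terms inside $\divisor(g_{\fp,i})$ reproduce the $(V^{(i-2)} - V - P) + (P)$ pattern appearing in $\divisor(\cS_{\fp,i})$. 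The degenerate case $i=2$ is already verified in the paragraph immediately following Proposition~\ref{P:Sidiv}.

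There is no serious obstacle here; the argument is divisor bookkeeping on~$E$, and all the constituent functions are already normalized so that $\tsgn = 1$. The one point worth noting is that the divisors proposed in~\eqref{divgi} and~\eqref{divgfpi} are principal: using the shtuka relation $V - V^{(1)} = \Xi$ from~\eqref{Vdef}, the finite zeros of $g_i$ sum to $(V^{(i-1)} - V) + (-V^{(i-1)}) + V = \infty$ under the group law on $E$, and those of $g_{\fp,i}$ sum to $(V^{(i-2)} - V - P) + (-V^{(i-2)}) + V + P = \infty$. Riemann--Roch then guarantees the existence of $g_i$ and $g_{\fp,i}$, uniquely normalized by $\tsgn = 1$, so the right-hand sides are well-defined.
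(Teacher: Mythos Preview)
Your proposal is correct and is precisely the argument the paper has in mind: the paper states only that the proposition follows ``by comparing divisors and leading terms'' from~\eqref{fdef}, \eqref{nudiv}, and Proposition~\ref{P:Sidiv}, and you have written out that comparison in full. One small remark: the sentence about the $i=2$ case of~(b) is slightly misleading, since the paragraph after Proposition~\ref{P:Sidiv} supplies only the divisor of $\cS_{\fp,2}$, not the full identity; but your divisor-plus-sign computation applies verbatim to $i=2$ once that divisor is known, so nothing is missing.
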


The function $g_i$ can be written explicitly as
\begin{equation} \label{giformula}
g_i = y - \beta + \frac{\beta^{q^{i-1}}+a_1\alpha^{q^{i-1}}+a_3+\beta}{\alpha^{q^{i-1}} - \alpha}\cdot (t-\alpha).
\end{equation}
We will find it more convenient to deal with $g_i$ after dividing by $\delta$, so we define
\begin{equation}\label{lambdai}
\lambda_i := \frac{g_i}{\delta} = \frac{y - \beta}{t-\alpha} + \frac{\beta^{q^{i-1}}+a_1\alpha^{q^{i-1}}+a_3+\beta}{\alpha^{q^{i-1}} - \alpha},
\end{equation}
and record that
\begin{equation}\label{divlambdai}
\divisor\left (\lambda_i\right ) = (-V\twistk{i-1}) - (-V) + (V\twistk{i-1} - V) - (\infty).
\end{equation}
To write $g_{\fp,i}$ explicitly we define functions $\lambda_{\fp,i}$ and $G_{\fp}$ to have $\tsgn = 1$ and divisors
\begin{gather}\label{divlambdafpi}
\divisor(\lambda_{\fp,i}) = (-V^{(i-2)}) - (-V-P) +(V^{(i-2)} - V-P)  - (\infty), \\
\label{divGfp}
\divisor(G_\fp) =  -(-V) + (P) + (-V-P) - (\infty).
\end{gather}
(The function $G_\fp$ should not be confused with the function $G_u$ in~\eqref{Gudef}.)  We find that
\begin{equation}\label{gdeltaquotient}
\frac{g_{\fp,i}}{\delta} = \lambda_{\fp, i}\cdot G_\fp.
\end{equation}
Just as we derived for $\lambda_i$ in~\eqref{lambdai}, we can write $\lambda_{\fp,i}$ as
\begin{equation}\label{lambdafpi}
\lambda_{\fp,i} = \frac{y - y(V+P)}{t-t(V+P)} + \frac{\beta^{q^{i-2}}+a_1 \alpha^{q^{i-2}} + a_3+y(V+P)}{\alpha^{q^{i-2}} - t(V+P)}
\end{equation}
(as part of verifying that the right-hand side has the correct divisor from~\eqref{divlambdafpi} we note that $V+P$ is not a $2$-torsion point since $V$ is transcendental over the field of definition $\F_q$ of $E$).
If we take $\nu_\fp(t,y)$ to be the linear polynomial connecting the collinear points $V$, $P$, $-(V+P)$ and denote its slope as $m_\fp$, then we can write $G_\fp$ as
\begin{equation}\label{Gkeq}
G_\fp = \frac{\nu_\fp(t,y)}{\delta(t)} = \frac{(y-y_0) - m_\fp(t-t_0)}{t-\alpha}.
\end{equation}
Using \eqref{gdeltaquotient}--\eqref{Gkeq}, we find $g_{\fp,i} = \lambda_{\fp,i} \cdot \nu_{\fp}$ explicitly.

The formulas for $\cS_i$ and $\cS_{\fp,i}$ will be important in~\S\ref{S:LSeries}--\ref{S:LogAlg}, but it remains to prove formulas for $S_i$ and $S_{\fp,i}$ themselves.  We derive such formulas as specializations of products of twists of shtuka functions, and our results complement a formula of Thakur~\cite[Thm.~IV]{Thakur92} for $S_i$.  (The advantage of these formulas over those in Lemma~\ref{L:SiDiprod} is that we do not have closed formulas for $D_i$ and $D_{\fp,i}$.)

\begin{theorem} \label{T:Siformula}
For $S_i = \sum_{a \in A_{i+}} 1/a$ and $S_{\fp,i} = \sum_{a \in \fp_{i+}} 1/a$, the following formulas hold.
\begin{enumerate}
\item[(a)] For $i\geq 2$,
\[
S_i = \frac{\nu^{(i)}}{g_i^{(1)}\cdot f^{(1)} \cdots f^{(i)}} \Bigg|_\Xi.
\]
\item[(b)] For $i\geq 2$,
\[
S_{\fp,i} = \frac{\nu^{(i-1)}}{g_{\fp,i}^{(1)} \cdot f^{(1)} \cdots f^{(i-1)}} \Bigg|_{\Xi}.
\]
\end{enumerate}
\end{theorem}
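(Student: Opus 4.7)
My plan is to deduce both formulas by Frobenius-twisting the identities established in Proposition~\ref{P:SiSpi} and evaluating at $\Xi$. For part~(a), applying the twisting automorphism of $H(t,y)$ to
\[
\cS_i = S_i \cdot \frac{g_i\, f f^{(1)}\cdots f^{(i-1)}}{\nu^{(i-1)}}
\]
(noting that $S_i \in K$ is a scalar coefficient, and so becomes $S_i^q$) gives the identity of rational functions
\[
\cS_i^{(1)} = S_i^q \cdot \frac{g_i^{(1)}\, f^{(1)}\cdots f^{(i)}}{\nu^{(i)}},
\]
whence $F := \nu^{(i)}/\bigl(g_i^{(1)}\, f^{(1)}\cdots f^{(i)}\bigr) = S_i^q/\cS_i^{(1)}$. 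A divisor computation from the Frobenius twist of Proposition~\ref{P:Sidiv}(a) confirms that $F$ is regular and nonzero at $\Xi$ (its finite poles lie at $\Xi^{(j)}$ for $j = 1,\ldots,i-1$ and at $V^{(i)}-V^{(1)}$), so evaluating at $\Xi$ is legitimate. The theorem therefore reduces to showing $\cS_i^{(1)}|_\Xi = S_i^{q-1}$.

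Unpacking $\cS_i^{(1)}|_\Xi$ directly from the definition, since each $a(t,y) \in \F_q[t,y]$ has $\F_q$-coefficients and so is fixed by the twist while each scalar $a(\theta,\eta) = a \in K$ becomes $a^q$, one computes
\[
\cS_i^{(1)}\bigl|_\Xi = \sum_{a \in A_{i+}} \frac{a(\theta,\eta)}{a^q} = \sum_{a \in A_{i+}} \frac{1}{a^{q-1}}.
\]
Thus the theorem comes down to establishing $\sum_{a \in A_{i+}} a^{1-q} = S_i^{q-1}$ in $K$, which is the heart of the argument and the main obstacle.

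I would establish this as an instance of a general lemma: for any $\F_q$-affine subspace $W$ of a characteristic-$p$ field avoiding~$0$, one has $\sum_{w \in W} w^{1-q} = \bigl(\sum_{w \in W} w^{-1}\bigr)^{q-1}$. The idea is to write $W = \kappa + U$ for the associated vector space $U$, let $\Pi_U(z) = \prod_{u \in U}(z-u) = c_0 z + c_1 z^q + c_2 z^{q^2}+\cdots$ be the corresponding $\F_q$-linear $q$-polynomial, and observe that $R(z) := \prod_{w \in W}(z-w) = \Pi_U(z-\kappa) = \Pi_U(z) - D_W$, where $D_W := \prod_{w \in W} w = \Pi_U(\kappa)$. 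Then
\[
\frac{R'(z)}{R(z)} = \frac{c_0}{\Pi_U(z) - D_W} = -\frac{c_0}{D_W} \sum_{k=0}^{\infty} \Bigl(\frac{\Pi_U(z)}{D_W}\Bigr)^k,
\]
which I would expand as a power series at $z=0$. The sparsity of $\Pi_U(z)$ (supported only on the exponents $q^j$) forces any monomial $z^{q-2}$ in the expansion to arise only from the $k = q-2$ summand, with coefficient $-c_0^{q-1}/D_W^{q-1}$, while the $z^0$-coefficient is simply $-c_0/D_W$. Matching against the standard Laurent expansion $R'/R = -\sum_{n \ge 0} z^n \sum_{w \in W} w^{-(n+1)}$ at $n=0$ and $n=q-2$ yields $\sum 1/w = c_0/D_W$ and $\sum 1/w^{q-1} = c_0^{q-1}/D_W^{q-1}$, proving the lemma.

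Applying the lemma with $W = A_{i+}$ (an $\F_q$-affine translate of an $(i-1)$-dimensional $\F_q$-subspace of $A$, by~\eqref{AFqbasis}) completes part~(a). Part~(b) follows by the same strategy: Frobenius-twisting Proposition~\ref{P:SiSpi}(b) gives $F_\fp := \nu^{(i-1)}/\bigl(g_{\fp,i}^{(1)}\, f^{(1)}\cdots f^{(i-1)}\bigr) = S_{\fp,i}^q/\cS_{\fp,i}^{(1)}$, and evaluating at $\Xi$ together with the lemma applied to $\fp_{i+}$ (an $\F_q$-affine subspace of $\fp$ by~\eqref{pFqbasis}) yields $\cS_{\fp,i}^{(1)}|_\Xi = S_{\fp,i}^{q-1}$, whence $F_\fp(\Xi) = S_{\fp,i}$.
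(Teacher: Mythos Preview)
Your proof is correct and takes a genuinely different route from the paper's.  The paper introduces an $s$-variable deformation
\[
\cQ_{i,s}(t_1,y_1,\dots,t_s,y_s) = \sum_{a\in A_{i+}} \frac{a(t_1,y_1)\cdots a(t_s,y_s)}{a(\theta,\eta)},
\]
proves via Lemma~\ref{L:SimonslemmaforA} that it factors over the $s$ variable pairs, specializes to $s=q-1$ with all variables equal, twists, and then evaluates at $\Xi$ using the direct identity $a(\Xi)^{q-1}/a^q = 1/a$ to obtain $S_i = S_i^q\cdot\bigl(g_i^{(1)} f^{(1)}\cdots f^{(i)}/\nu^{(i)}\bigr)^{q-1}\big|_\Xi$.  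This determines $S_i$ only up to a factor $\zeta\in\F_q^\times$, and the paper then carries out a sign analysis (computing $\sgn$ of each of $\nu^{(j)}(\Xi)$, $\delta^{(j)}(\Xi)$, $g_i^{(1)}(\Xi)$ and matching against Lemma~\ref{L:SiDiprod}) to pin down $\zeta=1$.  Your approach bypasses both the multivariable machinery and the sign analysis: you stay with the single-variable $\cS_i$, twist once, and reduce to the clean power-sum identity $\sum_{w\in W} w^{1-q} = \bigl(\sum_{w\in W} w^{-1}\bigr)^{q-1}$ for $\F_q$-affine spaces $W$, which you establish by the generating-function argument with $R'/R$.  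Because this identity yields $\cS_i^{(1)}(\Xi)=S_i^{q-1}$ on the nose (rather than $(q-1)$-st powers of both sides), you land on $S_i$ exactly with no root-of-unity ambiguity.  Your argument is shorter and more elementary; the paper's multivariable factorization, on the other hand, is in the spirit of the Angl\`es--Pellarin--Simon framework and may be of independent interest for higher-$s$ questions.
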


\begin{proof}
Again we adapt methods of Angl\`{e}s, Pellarin, and Simon (see \cite[\S 2]{AnglesPellarin14}, \cite{AnglesSimon}, \cite[Thm. 5.2.6]{PLogAlg}).  For $1 \leq s \leq q-1$, choose variables $t_1$, $y_1, \dots, t_s$, $y_s$ as in Lemma~\ref{L:SimonslemmaforA}, and define a multivariable deformation of $S_i$,
\[
  \cQ_{i,s}(t_1,y_1,\dots,t_s,y_s) = \sum_{a \in A_{i+}} \frac{a(t_1,y_1) \cdots a(t_s,y_s)}{a(\theta,\eta)}.
\]
For $j \geq 0$, evaluating the final $(t_s,y_s)$ variables of $\cQ_{i,s}$ at $\Xi^{(j)}$ gives
\begin{multline*}
\cQ_{i,s}(t_1,y_1,t_2,y_2,\dots,\theta^{q^j}, \eta^{q^j}) = \sum_{a \in A_{i+}} a(t_1,y_1) \cdots a(t_{s-1,} y_{s-1})a(\theta,\eta)^{q^j-1} \\
= \cT_{i,s-1+j(q-1)}(t_1,y_1,\dots,t_{s-1},y_{s-1},\theta^{q^{j-1}},\eta^{q^{j-1}},\dots,\theta^{q^{j-1}},\eta^{q^{j-1}}, \dots, \theta,\eta,\dots,\theta,\eta),
\end{multline*}
where, similar to the proof of Proposition~\ref{P:Sidiv}, the last line has $q-1$ entries of each pair $\theta^{q^{j-k}}$, $\eta^{q^{j-k}}$.  By Lemma~\ref{L:SimonslemmaforA}(a), as $s \leq q-1$ we find that the above quantity vanishes for $j\leq i-2$.  By symmetry, we find that $\cQ_{i,s}(t_1,y_1,\dots,t_s,y_s)$ vanishes at $\Xi^{(j)}$ for $j\leq i-2$ and for each of the~$s$ pairs of variables.  We claim that
\begin{equation} \label{Qisprod}
  \cQ_{i,s} = S_i\cdot \prod_{r=1}^s \frac{g_i}{\nu^{(i-1)}} \cdot f f^{(1)} \cdots f^{(i-1)} \bigg|_{(t_r,y_r)}.
\end{equation}
We consider $\cQ_{i,s}$ to be a function on the $s$-fold product $E \times \cdots \times E$.  Using the same analysis as in the proofs of Propositions~\ref{P:Sidiv} and~\ref{P:SiSpi}, we see that
\[
  G := \cQ_{i,s} \biggm/ \biggl( \frac{g_i}{\nu^{(i-1)}} \cdot f f^{(1)} \cdots f^{(i-1)} \bigg|_{(t_1,y_1)} \biggr) \in \oK(t_1,y_1, \dots, t_s, y_s)
\]
must be independent of $t_1$ and $y_1$.  That is, $G \in \oK(t_2,y_2, \dots, t_s,y_s)$.  Proceeding by induction it follows that
\[
\cQ_{i,s} \biggm/ \biggl( \prod_{r=1}^s \frac{g_i}{\nu^{(i-1)}} \cdot f f^{(1)} \cdots f^{(i-1)} \bigg|_{(t_r,y_r)} \biggr) \in \oK,
\]
and by comparing leading coefficients this quotient must be $S_i$, thus verifying~\eqref{Qisprod}.  Now setting $t_1 = \cdots = t_s = t$ and $y_1 = \cdots = y_s = y$, we define
\[
\cR_{i,s}(t,y) = \cQ_{i,s}(t,y,\dots,t,y) = \sum_{a \in A_{i+}} \frac{a(t,y)^s}{a(\theta,\eta)} \in K[t,y],
\]
and using \eqref{Qisprod} we see that
\[
  \cR_{i,s}(t,y) = S_i \biggl( \frac{g_i}{\nu^{(i-1)}} \cdot f f^{(1)} \cdots f^{(i-1)} \biggr)^s.
\]
Taking $s = q-1$ and twisting, we find
\[
  \cR_{i,q-1}^{(1)}(t,y) = \sum_{a \in A_{i+}} \frac{a(t,y)^{q-1}}{a(\theta,\eta)^q} = S_i^q
  \biggl( \frac{g_i^{(1)}}{\nu^{(i)}} \cdot f^{(1)} \cdots f^{(i)}
  \biggr)^{q-1},
\]
and evaluating at $\Xi$, we have
\[
 S_i = S_i^q \biggl( \frac{g_i^{(1)}}{\nu^{(i)}} \cdot f^{(1)} \cdots f^{(i)}
  \biggr)^{q-1}\bigg|_{\Xi}.
\]
Solving for $S_i$, for some $\zeta \in \F_q^{\times}$, we have
\[
  S_i = \zeta\cdot \frac{\nu^{(i)}}{g_i^{(1)} \cdot f^{(1)} \cdots f^{(i)} } \bigg|_{\Xi}.
\]
The proof for part (a) will be complete once we show that $\zeta = 1$.  We analyze the signs of the terms on the right-hand side.  From~\eqref{fdef},
\[
  \nu^{(j)}(\Xi) = \eta - \eta^{q^j} - m^{q^j} \bigl(\theta - \theta^{q^j} \bigr), \quad
  \delta^{(j)}(\Xi) = \theta - \alpha^{q^j}.
\]
By~\eqref{mquotients} we see that
\[
  \sgn(m) = \frac{\sgn(\beta^q)}{\sgn(\alpha^q)} = 1,
\]
since $\sgn(\alpha) = \sgn(\beta) = 1$ by~\eqref{sgnalphabeta}.  For $j \geq 1$, the term in $\nu^{(j)}(\Xi)$ of highest degree is $m^{q^j}\theta^{q^j}$, and thus
\[
  \sgn\bigl( \nu^{(j)}(\Xi) \bigr) = \sgn \bigl( m^{q^j} \theta^{q^j} \bigr) = 1, \quad
  \sgn\bigl( \delta^{(j)}(\Xi) \bigr) = \sgn \bigl( -\alpha^{q^j} \bigr) = -1.
\]
Finally, using~\eqref{giformula} we see from a similar analysis that $\sgn (g_i^{(1)}(\Xi)) = -1$. Therefore,
\[
  \sgn \biggl( \frac{\nu^{(i)}}{g_i^{(1)} \cdot f^{(1)} \cdots f^{(i)} } \bigg|_{\Xi} \biggr) = (-1)^{i-1} = \sgn(S_i),
\]
where the last equality follows from Lemma~\ref{L:SiDiprod}(a).

For (b), we note first that $S_{\fp,2} = 1/(\theta-t_0)$, and since $g_{\fp,2} = (t-t_0)\delta$ by~\eqref{divgfpi}, the result for $S_{\fp,2}$ follows.  For $i \geq 3$, the rest of the proof of (b) is similar to (a).  We use Lemma~\ref{L:SimonslemmaforA}(b) to analyze an analogous function
\[
 \cQ_{i,s}'(t_1,y_1,\dots,t_s,y_s) = \sum_{a \in \fp_{i+}} \frac{a(t_1,y_1) \cdots a(t_s,y_s)}{a(\theta,\eta)},
\]
and through a comparable argument we arrive at the identity
\[
  S_{\fp,i} = \zeta \cdot \frac{\nu^{(i-1)}}{g_{\fp,i}^{(1)} \cdot f^{(1)} \cdots f^{(i-1)}} \Bigg|_{\Xi},
\]
for some $\zeta \in \F_q^{\times}$.  By Lemma~\ref{L:SiDiprod}(b), $\sgn(S_{\fp,i}) = (-1)^i$, and we show this agrees with the sign of the right-hand side when $\zeta =1$.  To do this we must also use~\eqref{gdeltaquotient}--\eqref{Gkeq} to observe that $\sgn(\lambda_{\fp,i}(\Xi)) = 1$ and that $\sgn(G\twist_{\fp}(\Xi)) = \sgn(\delta\twist(\Xi)) = -1$, from which we confirm that $\sgn(g_{\fp,i}^{(1)}(\Xi)) = -1$.  For the sake of brevity we leave the remaining details to the reader.
\end{proof}

\section{Pellarin $L$-series} \label{S:LSeries}

In this section we investigate two types of Pellarin $L$-series for $\bA$, and we prove versions of Pellarin's identity~\eqref{Pellarinformula} for them in terms of $\omega_\rho$ and $\pi_\rho$.  For $s \in \Z_+$ and $\chi$ as defined in~\eqref{caniso}, we set
\begin{equation}
  L(\bA;s) = \sum_{a \in A_+} \frac{\chi(a)}{a^s} = \sum_{a \in A_+} \frac{a(t,y)}{a^s},
\end{equation}
and it is easily seen that, for fixed $s$, $L(\bA;s)$ is an element of $\TT[y]$.  However, one can use the methods of \cite[\S 5.3]{AnglesNgoDacRibeiro16b} and~\cite{Goss13} to show that $L(\bA;s)$ extends to an entire function on $U$.  We have the following identity.

\begin{theorem} \label{T:Lvalue1}
As elements of $\TT[y]$,
\[
  L(\bA;1) = -\frac{\delta^{(1)}\, \pi_\rho}{f \omega_\rho}.
\]
\end{theorem}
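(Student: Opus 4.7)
The plan is to match both sides explicitly as elements of $\TT[y]$.  On the right I would substitute the product formula~\eqref{omegarhoprod} for $\omega_\rho$ and the one from Theorem~\ref{T:pirho} for $\pi_\rho$; a short calculation shows that the $\xi$-factors telescope away (using $\xi^{q/(q-1)} = \xi \cdot \xi^{1/(q-1)}$ and cancelling each $\xi^{q^i}$ in the numerator against its counterpart in the denominator), leaving
\[
  -\frac{\delta^{(1)}\pi_\rho}{f\omega_\rho}
  \;=\; \frac{\delta^{(1)}}{\delta^{(1)}(\Xi)} \prod_{i=1}^\infty \frac{f^{(i)}}{f^{(i)}(\Xi)},
\]
a product that converges in $\TT[y]$ because $\lVert f^{(i)}/f^{(i)}(\Xi) - 1 \rVert$ decays doubly-exponentially in $i$.

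On the left I would expand by degree: $L(\bA;1) = \sum_{i\geq 0}\cS_i$.  Since $\deg\theta = 2$ and $\deg\eta = 3$, there are no monic elements of degree one, so $\cS_0 = 1$, $\cS_1 = 0$, and $L(\bA;1) = 1 + \sum_{i\geq 2}\cS_i$.  For $i \geq 2$, combining Proposition~\ref{P:SiSpi}(a) with Theorem~\ref{T:Siformula}(a) gives the closed form
\[
  \cS_i \;=\; \frac{\nu^{(i)}(\Xi) \cdot g_i \cdot f f^{(1)} \cdots f^{(i-1)}}{g_i^{(1)}(\Xi)\cdot f^{(1)}(\Xi)\cdots f^{(i)}(\Xi)\cdot \nu^{(i-1)}}.
\]

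The core task is then to verify that this sum equals the infinite product above.  I would proceed in the spirit of Angl\`es--Pellarin--Simon, as already used in the proofs of Proposition~\ref{P:SiSpi} and Theorem~\ref{T:Siformula}: introduce a multivariable deformation of $L(\bA;1)$, apply Lemma~\ref{L:SimonslemmaforA}(a) to extract vanishing conditions at the points $\Xi^{(j)}$, and then match divisors on $E$ using $\nu^{(j)} = \delta^{(j)} f^{(j)}$ together with the divisor formula~\eqref{divgi} for $g_i$.  Sign normalization ($\tsgn = 1$) pins down the scalar constants, and a convergence argument in the Gauss norm on $\TT[y]$ lets one pass to the limit over truncations.

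The principal obstacle is precisely this matching.  The factors $g_i/g_i^{(1)}(\Xi)$ appearing in each $\cS_i$ carry shifting divisor data $(V^{(i-1)}-V) + (V) + (-V^{(i-1)}) - 3(\infty)$ that do not correspond term-by-term with the clean $f^{(i)}/f^{(i)}(\Xi)$ factors of the partial products; indeed one can already see at the level of $\cS_2$ that the truncated sum $1 + \sum_{i=2}^{N+1}\cS_i$ does not coincide with the partial product $\frac{\delta^{(1)}}{\delta^{(1)}(\Xi)} \prod_{i=1}^N \frac{f^{(i)}}{f^{(i)}(\Xi)}$.  Reconciling the two expansions therefore requires careful bookkeeping of successive cancellations between the $g_i$- and $\nu^{(i-1)}$-contributions across consecutive $i$, together with a Gauss-norm estimate ensuring that the truncation discrepancies in $\TT[y]$ vanish in the limit.
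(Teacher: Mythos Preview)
Your overall strategy is the same as the paper's: write $L(\bA;1)=\sum_{j\ge 0}\cS_j$, rewrite the right-hand side as the product $\dfrac{\delta^{(1)}}{\delta^{(1)}(\Xi)}\prod_{i\ge 1}\dfrac{f^{(i)}}{f^{(i)}(\Xi)}$, and match.  Your computation of the product form is correct, as is your closed form for each individual $\cS_i$.

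The gap is in how you propose to bridge the two.  The multivariable deformation and Lemma~\ref{L:SimonslemmaforA}(a) are red herrings here: those tools locate the zeros of a \emph{single} $\cS_i$ (or of $\cQ_{i,s}$) at $\Xi,\Xi^{(1)},\dots$, but the partial sum $\sum_{j=0}^i\cS_j$ does \emph{not} vanish at $\Xi$ (since $\cS_0=1$), so Simon's lemma applied to a deformation of $L(\bA;1)$ will not produce the divisor you need.  What you are missing is the concrete discovery that the partial sum itself has a closed form,
\[
\sum_{j=0}^i\cS_j \;=\; \frac{f^{(1)}\cdots f^{(i)}\cdot g_i^{(1)}/\nu^{(i)}}{\,f^{(1)}\cdots f^{(i)}\cdot g_i^{(1)}/\nu^{(i)}\big|_\Xi\,},
\]
which is Proposition~\ref{P:partialsums}(a).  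This is proved by induction on~$i$: one checks that
\[
f^{(1)}\cdots f^{(i)}\frac{g_i^{(1)}}{\nu^{(i)}} \;-\; c_i\cdot f^{(1)}\cdots f^{(i-1)}\frac{g_{i-1}^{(1)}}{\nu^{(i-1)}} \;=\; f f^{(1)}\cdots f^{(i-1)}\frac{g_i}{\nu^{(i-1)}}
\]
for a suitable constant $c_i\in H$, by comparing divisors (both sides are regular on $U$, share the same polar order at $\infty$, vanish at $\Xi$, and have $\tsgn=1$).  This is precisely the ``successive cancellations between the $g_i$- and $\nu^{(i-1)}$-contributions'' you allude to in your final paragraph, but you need to actually \emph{find} this telescoping identity rather than hope it emerges from a deformation argument.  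Once the partial-sum formula is in hand, the limit $i\to\infty$ is handled by the three straightforward estimates $\nu^{(i)}/\nu^{(i)}(\Xi)\to 1$, $g_i^{(1)}/g_i^{(1)}(\Xi)\to\delta^{(1)}/\delta^{(1)}(\Xi)$, and the product limit you already wrote down (Proposition~\ref{P:quotientlimits}).
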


\begin{remark}
Pellarin~\cite[Thm.~2]{Pellarin12} proved special value results for $L(\F_q[t];s)$ for $s \equiv 1 \pmod{q-1}$ along the lines of \eqref{Pellarinformula}, and these results were extended to multivariable versions of $L(\F_q[t];s)$ in~\cite{AnglesPellarin14}, \cite{AnglesPellarinRibeiro15}, \cite{AnglesPellarinRibeiro16}, \cite{PellarinPerkins16}, \cite{Perkins14a}.  For the sake of space we do not pursue analogous results for $L(\bA;s)$ here, though we expect such questions to lead to interesting investigations.
\end{remark}

To define the second type of Pellarin $L$-series, we recall the action of non-zero ideals $\fa \subseteq A$ on the isomorphism classes of the Drinfeld-Hayes module $\rho : \bA \to H[\tau]$, as defined by Hayes~\cite{Hayes79}.  For $\fa \subseteq A$,  we define the left ideal of $H[\tau]$ by
\begin{equation} \label{Jfa}
  J_{\fa} = \langle \rho_{\oa} \mid a \in \fa \rangle \subseteq H[\tau].
\end{equation}
As $H[\tau]$ is a left principal ideal domain~\cite[Cor.~1.6.3]{Goss}, there is a unique monic generator $\rho_\fa$ of $J_\fa$, and we have $\deg_\tau \rho_\fa = \deg \fa = \dim_{\F_q}(A/\fa)$.  Hayes proved~\cite[Prop.~3.2]{Hayes79} that for $\fa \neq 0$ there is a unique Drinfeld-Hayes module $\fa * \rho : \bA \to H[\tau]$ with the property for $\fb \subseteq A$,
\begin{equation} \label{rhofafb}
  \rho_{\fa \fb} = (\fa * \rho)_{\fb} \rho_{\fa}.
\end{equation}
We note that if $a \in A_+$, then $\rho_{(a)} = \rho_{\oa}$.  If we let $\sigma_{\fa} \in \Gal(H/K)$ denote the Artin automorphism associated to $\fa$, then Hayes proved~\cite[Thm.~8.5]{Hayes79} that
\[
  \fa * \rho = \rho^{\sigma_{\fa}},
\]
where $\rho^{\sigma_{\fa}} : \bA \to H[\tau]$ is the Drinfeld-Hayes module defined by letting $\sigma_\fa$ act on the coefficients of $\rho$.  Furthermore, if we let $B \subseteq H$ be the integral closure of $A$ in $H$, then $\rho_\fa \in B[\tau]$ and $B \fa = B \pd (\rho_{\fa})$, where $\pd(\rho_{\fa})$ is the constant term of $\rho_\fa$ with respect to $\tau$~\cite[Thm.~15.9]{Hayes92}.

For an ideal $\fa \subseteq A$, we set
\begin{equation} \label{chifadef}
  \chi(\fa) := \frac{\rho_\fa(\omega_\rho)}{\omega_\rho}.
\end{equation}
A priori we see that $\chi(\fa) \in \TT[y]$, but we will show in Lemma~\ref{L:chiprops} that $\chi(\fa) \in H(t,y)$.  We note right away, from the discussion after~\eqref{Dt}--\eqref{Dy} and from Lemma~\ref{L:omegarhoOmega0}, that
\begin{equation} \label{chiprincideal}
  \chi((a)) = \oa = a(t,y) = \chi(a), \quad a \in A_+,
\end{equation}
and so the definition of $\chi$ is consistent with the one on $A_+$ used earlier in this section.
For integers $s \geq 1$, we define the Pellarin $L$-series,
\begin{equation}
\LL(\bA;s) := \sum_{\fa \subseteq A} \frac{\chi(\fa)}{\pd(\rho_\fa)^s},
\end{equation}
where again we adopt the convention that such sums will always be over non-zero ideals $\fa \subseteq A$.  We will see that $\LL(\bA;s)$ is an element of $\TT[y]$.  Since $\pd(\rho_{(a)})= \pd(\rho_{\oa}) = a$, it follows that the subsum of $\LL(\bA;s)$ over principal ideals is exactly $L(\bA;s)$.  Our second main result is the following.

\begin{theorem} \label{T:Lambdavalue1}
As elements of $\TT[y]$,
\[
  \LL(\bA;1) = -\sum_{\sigma \in \Gal(H/K)} \biggl( \frac{\delta^{(1)}}{f} \biggr)^{\sigma} \cdot \frac{\pi_\rho}{\omega_\rho},
\]
where $H$ is the Hilbert class field of $K$.
\end{theorem}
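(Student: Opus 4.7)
Plan: The proof proceeds class-by-class in $\Gal(H/K)$. Since the Artin map $\fa\mapsto\sigma_\fa$ factors through $\Cl(A)$, we decompose
\[
\LL(\bA;1)=\sum_{\sigma\in\Gal(H/K)}\LL_\sigma(\bA;1),\qquad \LL_\sigma(\bA;1):=\sum_{\fa:\,\sigma_\fa=\sigma}\frac{\chi(\fa)}{\pd(\rho_\fa)},
\]
and it suffices to show $\LL_\sigma(\bA;1)=-(\delta^{(1)}/f)^\sigma\cdot\pi_\rho/\omega_\rho$ for each $\sigma$. The identity class contributes exactly $L(\bA;1)$ by \eqref{chiprincideal}, so Theorem~\ref{T:Lvalue1} handles this term on the nose.

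For a non-identity class $C$ with Artin symbol $\sigma$, the isomorphism $\Cl(A)\cong E(\F_q)$ arising from $\Pic^0(E)$ provides a unique degree-one prime $\fp'\subset A$ representing the inverse class $C^{-1}$. Integral ideals $\fa\in C$ are then in bijection with elements $c\in\fp'\cap A_+\setminus\{0\}$ via $\fp'\fa=(c)$. Since $\deg_\tau\rho_{\fp'}=1$, we have $\rho_{\fp'}=\pd(\rho_{\fp'})+\tau$, and Hayes' product rule \eqref{rhofafb} gives
\[
\rho_{(c)}=\rho_{\fp'\fa}=(\fp'*\rho)_\fa\cdot\rho_{\fp'}=\rho^{\sigma^{-1}}_\fa\cdot\bigl(\pd(\rho_{\fp'})+\tau\bigr).
\]
Comparing constant terms in $H[\tau]$ yields $\pd(\rho_\fa)=c/\sigma(\pd(\rho_{\fp'}))$. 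Applying both sides to $\omega_\rho$, using $\omega_\rho^{(1)}=f\omega_\rho$, and dividing by $\omega_\rho$ expresses $c(t,y)$ as an explicit polynomial in the coefficients of $\rho^{\sigma^{-1}}_\fa$, the scalar $\pd(\rho_{\fp'})$, and the shtuka twists $f,f^{(1)},\ldots$. Rearranging for $\chi(\fa)/c$ and summing over $c$ recasts $\LL_\sigma(\bA;1)$ as an explicit linear combination of the deformed prime-ideal reciprocal sums $\cS_{\fp',j}$ studied in Section~\ref{S:sums}.

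The sums $\cS_{\fp',j}$ admit closed shtuka-function expressions by Proposition~\ref{P:SiSpi}(b), while the leading scalar $S_{\fp',j}$ is evaluated by Theorem~\ref{T:Siformula}(b). Substituting these and invoking the product formula for $\pi_\rho$ of Theorem~\ref{T:pirho} reduces the sum to an expression in the shtuka data that must collapse to $-(\delta^{(1)}/f)^\sigma\cdot\pi_\rho/\omega_\rho$. The principal obstacle lies in the Galois bookkeeping: under the isomorphism $\Cl(A)\cong E(\F_q)$, the element $\sigma$ acts on $V=(\alpha,\beta)$ by translation by an $\F_q$-point $P_\sigma$, so the conjugates $\alpha^\sigma,\beta^\sigma,f^\sigma,\delta^\sigma$ and the scalar $\pd(\rho_{\fp'})$, as well as the auxiliary linear polynomials $g_{\fp',j},G_{\fp},\lambda_{\fp,i}$ from Section~\ref{S:sums}, are all encoded by evaluations at the translated point $V^\sigma=V+P_\sigma$. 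Verifying that the telescoping assembles precisely into the $\sigma$-conjugate $\delta^{(1),\sigma}/f^\sigma$ will rest on a careful deployment of the divisor identities \eqref{divf}, \eqref{nudiv}, \eqref{deltadiv}, \eqref{divgfpi}, \eqref{divlambdafpi}, and \eqref{divGfp}, together with the explicit slope relations in \eqref{mquotients} and \eqref{alphaidentity} relating $V$ to $\Xi$.
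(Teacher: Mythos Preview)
Your decomposition by ideal class is exactly the paper's strategy, and the trivial class is indeed Theorem~\ref{T:Lvalue1}. But the step ``rearranging for $\chi(\fa)/c$'' hides a real problem. Applying $\rho_{(c)}=\rho_{\fa}^{\sigma^{-1}}\cdot\rho_{\fp'}$ to $\omega_\rho$ gives
\[
c(t,y)=\sum_i a_i^{\sigma^{-1}}\,\chi(\fp')^{(i)}\,f f^{(1)}\cdots f^{(i-1)},\qquad \rho_\fa=\textstyle\sum_i a_i\tau^i,
\]
whereas $\chi(\fa)=\sum_i a_i\,f f^{(1)}\cdots f^{(i-1)}$ involves the \emph{untwisted} coefficients. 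There is no way to pass from one to the other without knowing the individual $a_i$; the other factorization $\rho_{(c)}=\rho_{\fp'}^{\sigma}\cdot\rho_\fa$ only produces the difference equation $c(t,y)=f\,\chi(\fa)^{(1)}-f^{\sigma}(P')\,\chi(\fa)$, again not a closed formula. The paper avoids this entirely: it never computes $\chi(\fa)$ ideal by ideal but uses $\chi(c\fb)=\overline{c}\,\chi(\fb)$ (Lemma~\ref{L:chiprops}(b)) to factor out a single $\chi(\gamma\fp^{-1})$, and evaluates that via the conjugate prime $\fq$ for $-P$, since $\fp\fq=(\theta-t_0)$ is principal (see~\eqref{chigammafpinv}).

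Two further points. First, the termwise formulas of Proposition~\ref{P:SiSpi}(b) and Theorem~\ref{T:Siformula}(b) do not by themselves make the infinite sum collapse; you need the telescoped partial sum of Proposition~\ref{P:partialsums}(b) and the explicit limits of Proposition~\ref{P:quotientlimits} to produce $\pi_\rho/\omega_\rho$. Second, once you arrive at the rational coefficient, matching it to $(\delta^{(1)}/f)^{\sigma^{-1}}$ is a divisor comparison (equation~\eqref{oddquotients}) resting on $V^{\sigma}=V-P$ (Lemma~\ref{L:Vsigma}(a)), not $V+P_\sigma$ as you have it.
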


In the interim, Angl\`{e}s, Ngo Dac, and Tavares Ribeiro~\cite{AnglesNgoDacRibeiro16a} have proved that a function similar to $\LL(\bA;1)$ is a rational multiple of $\pi_{\rho}/\omega_{\rho}$ using different methods.  However, although their methods work for arbitrary genera, they do not obtain a precise formula for this multiple.

We also prove the following class number identity in $\F_q$ upon evaluation at $\Xi$.

\begin{corollary} \label{C:Lambdaclassno}
Let $h(\bA) = \#E(\F_q)$ be the class number of $\bA$. Then
\[
\LL(\bA;1)\big|_{\Xi} = h(\bA).
\]
\end{corollary}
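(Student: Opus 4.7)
The plan is to apply Theorem~\ref{T:Lambdavalue1} and evaluate each term of the resulting sum at $\Xi$ directly. Writing $T_\sigma := (\delta^{(1)}/f)^\sigma \cdot \pi_\rho/\omega_\rho$, so that $\LL(\bA;1) = -\sum_\sigma T_\sigma$, the goal is to show that each $T_\sigma$ extends continuously to $\Xi$ with value $-1$. Summing the $[H:K]$ terms and invoking the standard identifications $[H:K] = h(\bA)$ (Hilbert class field, see Theorem~\ref{T:DummitHayes} and Proposition~\ref{P:mx1}) and $\Cl(\bA) \iso E(\F_q)$ will then complete the proof.

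First I would verify that each $T_\sigma$ is regular at $\Xi$. The point $\Xi$ is $K$-rational and hence fixed by $\sigma \in \Gal(H/K)$, so $\divisor(f^\sigma) = ((V^\sigma)^{(1)}) - (V^\sigma) + (\Xi) - (\infty)$ and $f^\sigma$ has a simple zero at $\Xi$; meanwhile $\omega_\rho$ has a simple pole at $\Xi$ by the residue formula $\pi_\rho = -\Res_\Xi(\omega_\rho \lambda)$ in~\eqref{pirhodef}. The simple pole of $1/f^\sigma$ therefore exactly cancels the simple zero of $\pi_\rho/\omega_\rho$, and the remaining factor $\delta^{(1),\sigma}(\Xi) = \theta - (\alpha^\sigma)^q$ is a nonzero constant.

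Next I would compute the leading local expansions at $\Xi$, using $t-\theta$ as a uniformizer. The residue relation with $\lambda = dt/(2y + a_1 t + a_3)$ gives
\[
\omega_\rho \sim -\frac{\pi_\rho (2\eta + a_1\theta + a_3)}{t-\theta} \quad \text{near } \Xi.
\]
For $f^\sigma$, I would apply the motive-theoretic identity $(t-\theta)/f = x_1 + f^{(1)}$ from~\eqref{x1ftwist} to the conjugate module $\rho^\sigma$ to obtain $(t-\theta)/f^\sigma|_\Xi = x_1^\sigma + (f^\sigma)^{(1)}(\Xi)$, and then combine this with the $\sigma$-conjugate of~\eqref{alphaidentity}, namely $(\theta - (\alpha^\sigma)^q)(x_1^\sigma + (f^\sigma)^{(1)}(\Xi)) = 2\eta + a_1\theta + a_3$, to conclude
\[
f^\sigma \sim \frac{\theta - (\alpha^\sigma)^q}{2\eta + a_1\theta + a_3}\,(t-\theta) \quad \text{near } \Xi.
\]
Multiplying these expansions yields $(\omega_\rho f^\sigma)(\Xi) = -\pi_\rho(\theta - (\alpha^\sigma)^q)$, and therefore
\[
T_\sigma(\Xi) = \frac{\delta^{(1),\sigma}(\Xi)\,\pi_\rho}{(\omega_\rho f^\sigma)(\Xi)} = \frac{(\theta - (\alpha^\sigma)^q)\,\pi_\rho}{-\pi_\rho(\theta - (\alpha^\sigma)^q)} = -1,
\]
as required.

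The main subtlety will be confirming that identity~\eqref{alphaidentity} survives conjugation by $\sigma$ in the form just used. Because~\eqref{alphaidentity} was derived using only the structural relations~\eqref{mquotients}, \eqref{fdef}, and~\eqref{x1mmq}, which apply verbatim to the isomorphic Drinfeld-Hayes module $\rho^\sigma = \sigma_{\fa} * \rho$ with its own shtuka function $f^\sigma$ and structural constants $\alpha^\sigma, \beta^\sigma, m^\sigma, x_1^\sigma$, this transfer is essentially routine; the main bookkeeping is to track carefully which quantities ($\theta, \eta, a_i$) are fixed by $\sigma$ and which ($\alpha, \beta, m, x_1$) are not. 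No other step requires more than direct manipulation of the already-established formulas for $\omega_\rho$, $\pi_\rho$, and the shtuka function.
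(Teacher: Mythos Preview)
Your proof is correct and takes a genuinely different route from the paper's. The paper first evaluates the $\sigma = \mathrm{id}$ term at $\Xi$ to get $-1$ (citing Theorem~\ref{T:pirho}), then rewrites each remaining summand as $\frac{f}{\delta^{(1)}}\cdot\bigl(\frac{\delta^{(1)}}{f}\bigr)^{\sigma^{-1}}\big|_\Xi$ and evaluates this ratio using the divisor identity~\eqref{oddquotients} together with Lemmas~\ref{L:Gfptwist} and~\ref{L:Vsigma}(c), all of which were developed for the proof of Theorem~\ref{T:Lambdavalue1}. Your approach instead observes that the entire computation for the identity term---namely the local expansion via~\eqref{x1ftwist} and~\eqref{alphaidentity}---is Galois-equivariant: applying $\sigma$ to those identities (with $\theta,\eta,a_i$ fixed and $\alpha,m,x_1,f$ conjugated) yields the analogous expansion of $f^\sigma$ at $\Xi$, so every $T_\sigma$ evaluates to $-1$ by the same mechanism. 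This bypasses the prime-ideal-specific lemmas entirely and makes transparent that the corollary is really a uniform statement about all the conjugate Drinfeld--Hayes modules $\rho^\sigma$; the paper's route, by contrast, reuses machinery already in place but at the cost of threading through the auxiliary functions $G_\fp$ and the product formula $f(P)f(-P)^\sigma = \theta - t_0$.
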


\begin{remark}
It would be interesting to make a comparison between $\LL(\bA;s)$ and similar Goss $L$-series obtained through exponentiation of ideals (see~\cite{Goss83}, \cite{Goss92}, \cite[Ch.~8]{Goss}).
\end{remark}

The proofs of Theorems~\ref{T:Lvalue1} and~\ref{T:Lambdavalue1} require a few preliminary results, but the primary underlying tool is the following proposition.  Recall from~\S\ref{S:sums} the definition of $\cS_i$ and, for a prime ideal $\fp \subseteq A$ of degree~$1$, the definition of $\cS_{\fp,i}$.

\begin{proposition} \label{P:partialsums}
We have the following partial sums in $H[t,y]$.
\begin{enumerate}
\item[(a)] For $i \geq 2$,
\[
\sum_{j=0}^i \cS_{j} = \frac{f^{(1)} \cdots f^{(i)}\cdot \dfrac{g_{i}^{(1)}}{\nu^{(i)}}}{f^{(1)} \cdots f^{(i)}\cdot  \dfrac{g_{i}^{(1)}}{\nu^{(i)}} \bigg|_{\Xi}}.
\]
\item[(b)] For a prime ideal $\fp \subseteq A$ of degree~$1$ and for $i \geq 2$,
\[
\sum_{j=0}^{i} \cS_{\fp, j} = \frac{f^{(1)} \cdots f^{(i-1)}\cdot \dfrac{g_{\fp,i}^{(1)}}{\nu^{(i-1)}} }{ f^{(1)} \cdots f^{(i-1)}\cdot \dfrac{g_{\fp,i}^{(1)}}{\nu^{(i-1)}} \bigg|_\Xi}.
\]
\end{enumerate}
\end{proposition}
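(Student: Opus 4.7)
The plan is to proceed by induction on $i$, reducing each inductive step to a polynomial identity on $E$ which is verified by divisor comparison followed by evaluation at $\Xi$. Writing the right-hand side of~(a) as $R_i$, Theorem~\ref{T:Siformula}(a) gives $R_i = S_i \cdot f^{(1)} \cdots f^{(i)} g_i^{(1)}/\nu^{(i)}$, while Proposition~\ref{P:SiSpi}(a) provides $\cS_j = S_j \cdot g_j f f^{(1)} \cdots f^{(j-1)}/\nu^{(j-1)}$ for $j \geq 2$ (with $\cS_0 = 1$ and $\cS_1 = 0$). For $i \geq 3$, the required step $R_i - R_{i-1} = \cS_i$ collapses, after clearing the common factor $f^{(1)} \cdots f^{(i-1)}/\nu^{(i-1)}$, to the key identity
\[
  S_i \cdot N_i = S_{i-1} \cdot g_{i-1}^{(1)} \nu^{(i)}, \qquad N_i := f^{(i)} g_i^{(1)} \nu^{(i-1)} - f g_i \nu^{(i)}.
\]

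To establish this identity I would compute divisors. Via~\eqref{divf}, \eqref{divgi}, and~\eqref{nudiv}, the two summands of $N_i$ share the common zeros $(V^{(i+1)})$, $(-V^{(i)})$, $(V^{(1)})$, $(-V^{(i-1)})$, $(\Xi^{(i)})$, both have a pole of order~$7$ at $\infty$, and both have $\tsgn = 1$; hence their leading terms cancel and $\deg N_i \leq 6$. Using $V^{(k+1)} = V^{(k)} - \Xi^{(k)}$ from~\eqref{Vdef}, the sum on $E$ of the five known zeros simplifies to $\Xi^{(1)} + \dots + \Xi^{(i-2)}$, which is nonzero for $i \geq 3$; therefore $\deg N_i = 6$, and the degree-zero requirement on the full divisor forces the sixth zero of $N_i$ to be $V^{(i-1)} - V^{(1)}$. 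A parallel computation shows that $\divisor(g_{i-1}^{(1)} \nu^{(i)})$ is precisely this same effective divisor minus $6(\infty)$, so $N_i = c_i \cdot g_{i-1}^{(1)} \nu^{(i)}$ for some $c_i \in K^{\times}$. Since $f(\Xi) = 0$, the second summand of $N_i$ vanishes at $\Xi$, and dividing through by $g_{i-1}^{(1)} \nu^{(i)}|_{\Xi}$ and applying Theorem~\ref{T:Siformula}(a) to both $S_i$ and $S_{i-1}$ yields $c_i = S_{i-1}/S_i$, as required.

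The base case $i = 2$ of (a) is handled by the same strategy applied to the identity $S_2 f^{(1)} N_2 = \nu^{(1)} \nu^{(2)}$; here the sum of the five known common zeros of $N_2$ on $E$ is empty and hence $\deg N_2 = 5$, with the overall scalar again identified by evaluation at $\Xi$ through Theorem~\ref{T:Siformula}(a). Part~(b) follows by the analogous approach using~\eqref{divgfpi}: the inductive step will reduce to $S_{\fp,i} N_{\fp,i} = S_{\fp,i-1} g_{\fp,i-1}^{(1)} \nu^{(i-1)}$ with $N_{\fp,i} := f^{(i-1)} g_{\fp,i}^{(1)} \nu^{(i-2)} - f g_{\fp,i} \nu^{(i-1)}$, while the base case $i = 2$ reduces to the polynomial identity $f^{(1)} g_{\fp,2}^{(1)} \nu = g_{\fp,2} f \nu^{(1)}$, again verified by direct divisor comparison. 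The main obstacle throughout will be the careful divisor bookkeeping in the presence of Frobenius twisting interacting with the group law on $E$; once that is set up correctly via $V^{(k+1)} = V^{(k)} - \Xi^{(k)}$, the identification of the scalar constants is automatic from Theorem~\ref{T:Siformula}.
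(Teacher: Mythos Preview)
Your proposal is correct and follows essentially the same approach as the paper: induction on $i$, with each step reduced to a polynomial identity on $E$ that is verified by comparing divisors and $\tsgn$, and with the scalar pinned down via Theorem~\ref{T:Siformula}. The only difference is packaging: you clear the common factor $f^{(1)}\cdots f^{(i-1)}/\nu^{(i-1)}$ and work with the fixed-degree polynomial $N_i$, whereas the paper keeps the full product $f^{(1)}\cdots f^{(i)} g_i^{(1)}/\nu^{(i)}$ and compares degree-$i$ polynomials directly; after unwinding, the two identities are the same equation.
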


\begin{proof}
Combining Proposition~\ref{P:SiSpi}(a) and Theorem~\ref{T:Siformula}(a) we have
\[
\cS_i = S_i \cdot f f^{(1)} \cdots f^{(i-1)}\cdot \frac{g_i}{\nu^{(i-1)}}
= \frac{f f^{(1)} \dots f^{(i-1)} \cdot \dfrac{g_i}{\nu^{(i-1)}}}{f^{(1)} f^{(2)}\cdots f^{(i)} \cdot \dfrac{g_i^{(1)}}{\nu^{(i)}} \bigg|_{\Xi}}.
\]
We then argue by induction on $i$.  We use $\cS_0 = 1$ and $\cS_1=0$ to establish the base case for $i=2$.  Examining the divisors given in \eqref{divf}, \eqref{nudiv}, and \eqref{divgi}, we claim that
\[
f\twist f\twistk{2} \frac{g_2\twist}{\nu\twistk{2}}  - \biggl(f\twist f\twistk{2} \frac{g_2\twist}{\nu\twistk{2}}\bigg|_{\Xi}\biggr)  = ff\twist\frac{g_2}{\nu\twist}.
\]
Indeed both sides vanish at $\Xi$ and have polar divisor $-2(\infty)$, which implies that the two sides share the same divisor.  Both sides have $\tsgn=1$, and so they must be equal to each other (and incidentally to $t-\theta$).  This implies that
\begin{equation}\label{basecase}
\cS_0 + \cS_1 + \cS_2 = 1+\frac{ff\twist\dfrac{g_2}{\nu\twist}}{f\twist f\twistk{2} \dfrac{g_2\twist}{\nu\twistk{2}} \bigg|_{\Xi}} = \frac{f\twist f\twistk{2} \dfrac{g_2\twist}{\nu\twistk{2}}}{f\twist f\twistk{2} \dfrac{g_2\twist}{\nu\twistk{2}} \bigg|_{\Xi}},
\end{equation}
and establishes case $i=2$.  For the induction step, examining divisors and signs shows that for $i \geq 3$,
\[
f\twist f\twistk{2}\cdots f\twistk{i} \frac{g_i\twist}{\nu\twistk{i}} - \biggl(f\twistk{i} \frac{g_i\twist}{\nu\twistk{i}}\cdot \frac{\nu\twistk{i-1}}{g_{i-1}\twist} \bigg|_{\Xi} \cdot f\twist f\twistk{2}\cdots f\twistk{i-1} \frac{g_{i-1}\twist}{\nu\twistk{i-1}}\biggr)  = ff\twist \cdots f\twistk{i-1} \frac{g_i}{\nu\twistk{i-1}}.
\]
Applying the induction hypothesis and then substituting in this identity, we find
\begin{align*}
\cS_0+ \cS_1 + \dots + \cS_{i-1} +  \cS_{i} &= \frac{f\twist f\twistk{2}\cdots f\twistk{i-1} \cdot \dfrac{g_{i-1}\twist}{\nu\twistk{i-1}}}{f\twist f\twistk{2}\cdots f\twistk{i-1} \cdot \dfrac{g_{i-1}\twist}{\nu\twistk{i-1}} \bigg|_{\Xi}}  +  \frac{ff\twist \cdots f\twistk{i-1} \cdot \dfrac{g_i}{\nu\twistk{i-1}}}{f\twist f\twistk{2}\cdots f\twistk{i} \cdot \dfrac{g_i\twist}{\nu\twistk{i}} \bigg|_{\Xi}} \\
&= \frac{f\twist f\twistk{2}\cdots f\twistk{i}\cdot \dfrac{g_i\twist}{\nu\twistk{i}}}{f\twist f\twistk{2}\cdots f\twistk{i} \cdot \dfrac{g_i\twist}{\nu\twistk{i}} \bigg|_{\Xi}},
\end{align*}
which establishes part (a).

For part (b) we know that $\cS_{\fp,0} = \cS_{\fp,1} = 0$ and that $\cS_{\fp,2} = (t-t_0)/(\theta - t_0)$.  Using that $g_{\fp,2} = (t-t_0)\delta$, the case $i=2$ follows from direct calculation.  To illustrate the induction step we consider the $i=3$ case.  Analyzing the divisors in~\eqref{divf}, \eqref{nudiv}, and~\eqref{divgfpi} as above, we claim that
\[
f^{(1)} f^{(2)} \cdot \frac{g_{\fp,3}^{(1)}}{\nu^{(2)}} - \frac{t - t_0}{\theta-t_0} \cdot \biggl( f^{(1)} f^{(2)} \cdot \frac{g_{\fp,3}^{(1)}}{\nu^{(2)}} \bigg|_{\Xi} \biggr)
= f f^{(1)} \cdot \frac{g_{\fp,3}}{\nu^{(1)}}.
\]
To prove the claim we see that the divisor of the right-hand side is $(\Xi) + (P) + (V^{(1)} - V-P) - 3(\infty)$, whereas the left-hand side has polar divisor $-3(\infty)$ and vanishes at both $\Xi$ and~$P$.  Since both sides have $\tsgn=1$, they are equal.  The case $i=3$ then follows.  The general induction is similar to this case and to part (a), and we omit the details.
\end{proof}

To prove Theorem~\ref{T:Lvalue1} we then combine Proposition~\ref{P:partialsums}(a) with the following proposition.

\begin{proposition} \label{P:quotientlimits}
The following limits hold in $\TT[y]$ with respect to the Gauss norm $\lVert \,\cdot\,\rVert$.
\begin{enumerate}
\item[(a)] $\displaystyle \lim_{i \to \infty} \frac{\nu^{(i)}}{\nu^{(i)}(\Xi)} = 1 \vphantom{\Bigg|}$.
\item[(b)] $\displaystyle \lim_{i \to \infty} \frac{g_i^{(1)}}{g_i^{(1)}(\Xi)} = \frac{\delta^{(1)}}{\delta^{(1)}(\Xi)}\vphantom{\Bigg|}$.
\item[(c)] $\displaystyle \lim_{i \to \infty} \frac{f^{(1)} \cdots f^{(i)}}{f^{(1)} \cdots f^{(i)}|_{\Xi}} = -\frac{\delta^{(1)}(\Xi) \cdot \pi_\rho}{f \omega_\rho}\vphantom{\Bigg|}$.
\end{enumerate}
\end{proposition}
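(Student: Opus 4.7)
I would handle (a) and (b) by direct Gauss-norm estimates on $\TT[y]$, and reduce (c) to the convergent product formula \eqref{omegarhoprod} defining $\omega_\rho$ together with Theorem~\ref{T:pirho}.

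For (a), from $\nu = y - \eta - m(t-\theta)$ in \eqref{fdef},
\[
  \nu^{(i)} - \nu^{(i)}(\Xi) = (y-\eta) - m^{q^i}(t - \theta),
\]
whose Gauss norm is at most $|m^{q^i} \theta|$.  Since $\deg m = q$ by \eqref{mquotients}, for large $i$ the dominant term of $\nu^{(i)}(\Xi) = \eta - \eta^{q^i} - m^{q^i}(\theta - \theta^{q^i})$ is $m^{q^i}\theta^{q^i}$, of absolute value $|m|^{q^i}|\theta|^{q^i}$, and hence $\lVert \nu^{(i)}/\nu^{(i)}(\Xi) - 1 \rVert \leq |\theta|^{1-q^i} \to 0$.

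For (b), twisting \eqref{giformula} gives $g_i^{(1)} = (y - \beta^q) + \mu_i(t - \alpha^q)$, where $\mu_i = (\beta^{q^i} + a_1\alpha^{q^i} + a_3 + \beta^q)/(\alpha^{q^i} - \alpha^q)$.  A direct expansion shows that the $\mu_i$ contributions cancel in the cross-difference:
\[
  \delta^{(1)}(\Xi)\, g_i^{(1)} - g_i^{(1)}(\Xi)\, \delta^{(1)} = (\theta - \alpha^q)(y - \beta^q) - (\eta - \beta^q)(t - \alpha^q).
\]
This right-hand side is independent of $i$ and has bounded Gauss norm, while $|\mu_i| \sim |\beta|^{q^i}/|\alpha|^{q^i}$ forces $|g_i^{(1)}(\Xi)| \to \infty$.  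Dividing by $\delta^{(1)}(\Xi)\, g_i^{(1)}(\Xi)$ then gives (b).

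For (c), I would use the telescoping identity
\[
  \frac{f^{(1)} \cdots f^{(i)}}{f^{(1)} \cdots f^{(i)}|_\Xi} = \frac{\prod_{j=1}^{i} \xi^{q^j}/f^{(j)}(\Xi)}{\prod_{j=1}^{i} \xi^{q^j}/f^{(j)}}.
\]
By \eqref{omegarhoprod} the denominator converges in $\TT[y]$ to $(f/\xi) \cdot (\omega_\rho/\xi^{1/(q-1)}) = f\omega_\rho/\xi^{q/(q-1)}$, a unit of $\TT[y]$ since $\omega_\rho \in \TT[y]^\times$ by \eqref{omegarhodef} and each $\delta^{(j)}$ is a unit on the closed unit disk.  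Theorem~\ref{T:pirho} identifies the limit of the numerator in $\C_\infty$ as $-\delta^{(1)}(\Xi)\, \pi_\rho /\xi^{q/(q-1)}$.  Taking the quotient of these two limits yields exactly $-\delta^{(1)}(\Xi)\, \pi_\rho/(f\omega_\rho)$, as desired.  The main subtlety lies in (c), where one must distinguish Gauss-norm convergence of the denominator in $\TT[y]$ from convergence of the numerator in $\C_\infty$, and invoke the fact that the denominator limit is a unit in $\TT[y]$ so that the limit of the quotients agrees with the quotient of the limits.
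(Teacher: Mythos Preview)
Your proposal is correct and follows essentially the same strategy as the paper: direct dominant-term estimates for (a) and (b), and the telescoping with $\xi^{q^j}$ combined with the product formulas \eqref{omegarhoprod} and Theorem~\ref{T:pirho} for (c).  The only cosmetic difference is that in (b) you cancel the $\mu_i$ contributions via the cross-difference $\delta^{(1)}(\Xi)\,g_i^{(1)} - g_i^{(1)}(\Xi)\,\delta^{(1)}$, whereas the paper simply clears the denominator $\alpha^{q^i}-\alpha^q$ and identifies $\beta^{q^i}+a_1\alpha^{q^i}+a_3+\beta^q$ as the dominant factor in both numerator and denominator; in (c) you take reciprocals of the paper's partial products, and you are right to flag that the quotient of limits is justified because $\omega_\rho \in \TT[y]^\times$.
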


\begin{proof}
For (a), we see from~\eqref{fdef} that
\[
  \frac{\nu^{(i)}}{\nu^{(i)}(\Xi)} = \frac{(m\theta - \eta)^{q^i} + y - m^{q^i}t}{(m\theta - \eta)^{q^i} + \eta - m^{q^i}\theta},
\]
and that $(m\theta - \eta)^{q^i}$, which has degree~$(q+2)q^i$, dominates both the numerator and denominator. Therefore, dividing through top and bottom by $(m\theta - \eta)^{q^i}$, we find that the limit goes to~$1$.  For (b), we note from~\eqref{giformula} that
\[
  \frac{g_i^{(1)}}{g_i^{(1)}(\Xi)} = \frac{(\alpha^{q^i}-\alpha^q)(y - \beta^q) + (\beta^{q^i} + a_1 \alpha^{q^i} + a_3 + \beta^q)(t-\alpha^q)}{(\alpha^{q^i}-\alpha^q)(\eta - \beta^q) + (\beta^{q^i} + a_1 \alpha^{q^i} + a_3 + \beta^q)(\theta-\alpha^q)}.
\]
The $\beta^{q^i} + a_1 \alpha^{q^i} + a_3 + \beta^q$ factors dominate the numerator and denominator, and so the limit goes to $(t-\alpha^q)/(\theta-\alpha^q) = \delta^{(1)}/\delta^{(1)}(\Xi)$ as desired.  As for (c), we recall the product formulas for $\omega_\rho$ and $\pi_\rho$ in~\eqref{omegarhoprod} and Theorem~\ref{T:pirho}, from which we see that
\[
  \lim_{i \to \infty} \frac{f^{(1)} \cdots f^{(i)}}{f^{(1)} \cdots f^{(i)}|_{\Xi}}
  =\lim_{i \to \infty} \frac{\prod_{j=1}^i f^{(j)}/\xi^{q^j}}{\prod_{j=1}^i f^{(j)}(\Xi)/\xi^{q^j}} = -\frac{\delta^{(1)}(\Xi)\cdot \pi_{\rho}}{f \omega_\rho},
\]
and we are done.
\end{proof}

\begin{proof}[Proof of Theorem~\ref{T:Lvalue1}]
By definition $L(\bA;1) = \sum_{j=0}^\infty \cS_j$, and so by Proposition~\ref{P:partialsums}(a) we see that
\[
  L(\bA;1) = \lim_{i\to \infty} \frac{\nu^{(i)}(\Xi)}{\nu^{(i)}} \cdot \frac{g_i^{(1)}}{g_i^{(1)}(\Xi)}
  \cdot \frac{f^{(1)} \cdots f^{(i)}}{f^{(1)} \cdots f^{(i)}|_{\Xi}} = -\frac{\delta^{(1)}\,\pi_\rho}{f \omega_\rho},
\]
where the last equality follows from Proposition~\ref{P:quotientlimits}.
\end{proof}

We now turn to the proof of Theorem~\ref{T:Lambdavalue1}.  Although it is similar to the proof of Theorem~\ref{T:Lvalue1}, we need to make the precise connections between $\LL(\bA;1)$ and the sums $\sum_{j=0}^\infty \cS_{\fp,j}$.  The proof is also somewhat complicated by the fact that $\lim_{i \to \infty} g_{\fp,i}^{(1)}/g_{\fp,i}^{(1)}(\Xi)$ is not as directly computable as in Proposition~\ref{P:quotientlimits}(b).  To overcome these obstacles we require several preliminary results, many of which fortunately will also be important for~\S\ref{S:LogAlg}.

We momentarily fix a prime ideal $\fp = (\theta-t_0,\eta-y_0) \subseteq A$ corresponding to a point $P = (t_0,y_0) \in E(\F_q)$.  We will assume that $P$ has order $r$ in $E(\F_q)$, and so the class of $\fp$ has order $r$ in the ideal class group $\Cl(A)$.  We let $R_{\fp} \subseteq H(t,y)$ be the local ring of functions regular at $P$.  We will let $\sigma = \sigma_{\fp} \in \Gal(H/K)$ denote the Artin automorphism associated to~$\fp$.

\begin{lemma} \label{L:rhofp}
We have $\rho_{\fp} = \tau-f(P)$.  In particular $\pd(\rho_\fp) = -f(P)$.
\end{lemma}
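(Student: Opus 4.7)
The plan is to exploit that $\rho_\fp$ is by definition the unique monic generator of the left ideal $J_\fp = \langle \rho_{\oa} \mid a \in \fp \rangle \subseteq H[\tau]$ and that $\deg_\tau \rho_\fp = \deg \fp = 1$, since $A/\fp \cong \F_q$.  Because $\tau - f(P)$ is also monic of degree~$1$, it will suffice to prove that $\tau - f(P)$ right-divides every generator $\rho_{\oa}$ with $a \in \fp$; this will force $\tau - f(P)$ to right-divide $\rho_\fp$, and equality of degree and leading coefficient will close the argument.

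To verify right-divisibility I will compute the right remainder of $\rho_{\oa}$ modulo $\tau - f(P)$ in $H[\tau]$.  A short induction using $\tau c = c^q \tau$ yields
\[
  \tau^n \equiv f(P)^{1 + q + \cdots + q^{n-1}} \pmod{\tau - f(P)}
\]
as right remainder, so if $\rho_{\oa} = a(\theta,\eta) + b_1 \tau + \cdots + b_i \tau^i$ the right remainder is
\[
  r = a(\theta,\eta) + \sum_{j=1}^{i} b_j\, f(P)^{1 + q + \cdots + q^{j-1}}.
\]
The crucial input is that $P \in E(\F_q)$: its coordinates are Frobenius-fixed, so $f^{(j)}(P) = f(P)^{q^j}$ for every $j \geq 0$ and hence $f(P) f^{(1)}(P) \cdots f^{(j-1)}(P) = f(P)^{1 + q + \cdots + q^{j-1}}$.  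Substituting this into the shtuka expansion $\oa = a(\theta,\eta) + b_1 f + \cdots + b_i f f^{(1)} \cdots f^{(i-1)}$ from \eqref{ashtukadecomp} identifies $r$ with $\oa(P)$.

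Because $a \in \fp$ forces $\oa(P) = a(t_0,y_0) = 0$, the remainder vanishes for every generator of $J_\fp$, so $\tau - f(P)$ right-divides $\rho_\fp$; monicity and the degree computation then give $\rho_\fp = \tau - f(P)$, and reading off the constant term yields $\pd(\rho_\fp) = -f(P)$.  The only step requiring any care is the noncommutative right-division formula above; after that, everything is a dictionary translation between the shtuka expansion of $\oa$ and the $\tau$-expansion of $\rho_{\oa}$, with the $\F_q$-rationality of $P$ being the one place where Frobenius twists of $f$ collapse into ordinary $q^j$-powers.
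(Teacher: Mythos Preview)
Your proof is correct and takes a genuinely different route from the paper's.  The paper uses the decomposition $\tau - f = \delta^{-1}(D_y - (\tau+m)D_t)$ from Proposition~\ref{P:tau-f} and then applies the evaluation-at-$P$ homomorphism $R_\fp[\tau] \to H[\tau]$ to exhibit $\tau - f(P)$ explicitly as an $H[\tau]$-combination of $\rho_{t-t_0}$ and $\rho_{y-y_0}$, thereby showing \emph{membership} $\tau - f(P) \in J_\fp$ directly.  You instead show \emph{right-divisibility}: you compute the right remainder of an arbitrary $\rho_{\oa}$ modulo $\tau - f(P)$, identify it with $\oa(P)$ via the shtuka expansion~\eqref{ashtukadecomp}, and observe that this vanishes for $a \in \fp$.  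Both arguments hinge on the $\F_q$-rationality of $P$ to collapse Frobenius twists $f^{(j)}(P)$ into powers $f(P)^{q^j}$.  Your approach is more self-contained in that it does not invoke Proposition~\ref{P:tau-f} or the factorizations~\eqref{Dtfactor}--\eqref{Dyfactor}; the paper's approach, on the other hand, yields as a byproduct the explicit formula $\tau - f(P) = \delta(P)^{-1}(\rho_{y-y_0} - (\tau + m)\rho_{t-t_0})$, which is occasionally useful elsewhere.
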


\begin{proof}
As noted above $\rho_{\fp}$ is the unique monic generator of the left ideal $J_{\fp}$ from~\eqref{Jfa}. Now~$J_{\fp}$ is generated by $\rho_{t-t_0}$ and $\rho_{y-y_0}$, and we observe that
\begin{align*}
\rho_{t-t_0} - (t-t_0) &= \rho_{t} - t = D_t, \\
\rho_{y-y_0} - (y-y_0) &= \rho_{y} - y = D_y,
\end{align*}
which are both right divisible by $\tau -f$ in $H(t,y)[\tau]$ by~\eqref{Dtfactor}--\eqref{Dyfactor}.  Consider the function $\varphi : R_{\fp}[\tau] \to H[\tau]$ defined by evaluating the coefficients of a polynomial in $R_{\fp}[\tau]$ at~$P$.  Since~$P$ is $\F_q$-rational, one checks that for any $g \in R_{\fp}$ and $i \geq 0$,
\[
  g^{(i)}(P) = g(P)^{q^i},
\]
and so $\varphi$ is an $\F_q$-algebra homomorphism.  From~\eqref{taufdecomp} we see that
\begin{align*}
\varphi(\tau -f) = \tau - f(P) &= \frac{1}{\delta(P)}\cdot \bigl( \rho_{y-y_0} - (y_0-y_0) - (\tau + m) (\rho_{t-t_0} - (t_0-t_0)) \bigr) \\
&= \frac{1}{\delta(P)}\cdot \bigl( \rho_{y-y_0}  - (\tau + m) \rho_{t-t_0} \bigr),
\end{align*}
and thus $\tau -f(P) \in J_{\fp}$.  Since it is monic of least possible positive degree in $J_{\fp}$ it must be a generator, and so $\rho_{\fp} = \tau - f(P)$.
\end{proof}

In order to simplify $\LL(\bA;1)$, we next outline properties of $\chi(\fa)$, which we recall from~\eqref{chifadef}.  One can also use this lemma to show that $\LL(\bA;s) \in \TT[y]$ for any $s \in \Z_+$.

\begin{lemma} \label{L:chiprops}
The function $\chi$ on non-zero ideals $\fa \subseteq A$ satisfies the following properties.
\begin{enumerate}
\item[(a)] For $a \in A_+$, $\chi((a)) = \oa$.
\item[(b)] If $c \in K$, with $\sgn(c) = 1$, and $\fa \subseteq A$ satisfy $c\fa \subseteq A$, then
\[
  \chi(c\fa) = \oc \chi(\fa).
\]
\item[(c)] For a degree~$1$ prime $\fp \subseteq A$ corresponding to a point $P \in E(\F_q)$,
\[
  \chi(\fp) = f - f(P).
\]
\end{enumerate}
In particular for all $\fa \subseteq A$, we have $\chi(\fa) \in H(t,y)$.
\end{lemma}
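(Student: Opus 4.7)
The plan centers on the fundamental identity $\omega_\rho^{(1)} = f \omega_\rho$ from~\eqref{omegarhotwist}, which iterates to give $\omega_\rho^{(i)} = f f^{(1)} \cdots f^{(i-1)} \omega_\rho$ for every $i \geq 0$. Consequently, for any nonzero $\fa \subseteq A$ with $\rho_\fa = \sum_i c_i \tau^i \in H[\tau]$,
\[
  \rho_\fa(\omega_\rho) = \sum_i c_i \omega_\rho^{(i)} = \biggl( \sum_i c_i\, f f^{(1)} \cdots f^{(i-1)} \biggr) \omega_\rho.
\]
Since the parenthesized sum is a finite $H$-linear combination of twists of $f \in H(t,y)$, it lies in $H(t,y)$; dividing through by $\omega_\rho \in \TT[y]^\times$ establishes the closing ``in particular'' assertion.

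Parts (a) and (c) then fall out directly. For (a) with $a \in A_+$, specializing the above to $\rho_t$ and $\rho_y$ and invoking the shtuka decompositions~\eqref{taction}, \eqref{yaction}, I would compute $\rho_t(\omega_\rho) = (\theta + x_1 f + f f^{(1)})\omega_\rho = t \omega_\rho$ and likewise $\rho_y(\omega_\rho) = y \omega_\rho$; multiplicativity of $\rho : \bA \to H[\tau]$ then gives $\rho_{\oa}(\omega_\rho) = \oa\, \omega_\rho$, so $\chi((a)) = \oa$. For (c), Lemma~\ref{L:rhofp} supplies $\rho_\fp = \tau - f(P)$, and one directly computes $\rho_\fp(\omega_\rho) = f \omega_\rho - f(P)\omega_\rho = (f - f(P))\omega_\rho$.

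The substantive step is (b). I would reduce to $c = b/a$ with $a, b \in A_+$ (always possible when $\sgn(c) = 1$), combine the ideal identity $(b)\fa = (a)(c\fa)$ with Hayes' formula~\eqref{rhofafb}, and use that principal ideals have trivial Artin symbol --- so that $(a)*\rho = (b)*\rho = \rho$ --- to obtain
\[
  \rho_\fa\, \rho_{\ob} = \rho_{(b)\fa} = \rho_{(a)(c\fa)} = \rho_{c\fa}\, \rho_{\oa}
\]
in $H[\tau]$. Applying both sides to $\omega_\rho$ and invoking part (a) on the innermost factors yields $\rho_\fa(\ob\, \omega_\rho) = \rho_{c\fa}(\oa\, \omega_\rho)$, from which $\ob\, \chi(\fa) = \oa\, \chi(c\fa)$ after extracting the monic scalars through the $\rho$-operators; rearrangement gives $\chi(c\fa) = \oc\, \chi(\fa)$.

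The single point needing care --- and what I expect to be the main obstacle if handled carelessly --- is precisely this extraction of $\ob, \oa \in \bA$ past the operators $\sum_i c_i \tau^i$ acting on $\TT[y]$: although $\tau$ fails to commute with $H$-coefficients in the abstract ring $H[\tau]$, as an endomorphism of $\TT[y]$ it does commute with multiplication by any element of $\bA$, since $\tau(\ob\, x) = \ob^{(1)} x^{(1)} = \ob\, x^{(1)}$ using that $\bA$ is the Frobenius-invariant subring of $\TT[y]$. Once this commutation is in hand, (b) is a few lines of bookkeeping.
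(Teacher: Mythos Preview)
Your proof is correct and, for parts (a)--(c), follows essentially the same line as the paper: part~(a) via the identities $\rho_t(\omega_\rho)=t\omega_\rho$, $\rho_y(\omega_\rho)=y\omega_\rho$ coming from $\omega_\rho\in\Omega_0$; part~(c) directly from Lemma~\ref{L:rhofp}; and part~(b) by combining~\eqref{rhofafb} with the triviality of the Artin symbol on principal ideals and the $\bA$-linearity of the $\TT[y]$-action (your commutation remark is exactly what the paper uses when it writes $\rho_\fa(\oc\,\omega_\rho)=\oc\,\rho_\fa(\omega_\rho)$). The one genuine difference is in the ``in particular'' clause: you deduce $\chi(\fa)\in H(t,y)$ directly from the closed formula $\chi(\fa)=\sum_i c_i\,f f^{(1)}\cdots f^{(i-1)}$, whereas the paper instead argues that every ideal is either principal or equivalent to a degree-$1$ prime and then appeals to (a)--(c). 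Your route is shorter and avoids the class-group input; the paper's route, on the other hand, makes the rationality of $\chi(\fa)$ explicitly visible in terms of the shtuka function via the concrete expressions $\oa$ and $f-f(P)$, which is what gets used downstream in~\S\ref{S:LSeries}.
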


\begin{proof}
Part (a) was observed in~\eqref{chiprincideal}.  For (b), if $c \in A_+$, then by~\eqref{rhofafb}, $\rho_{c\fa} = \rho_{\fa} \rho_{\oc}$, and so
\[
 \rho_{c\fa}(\omega_\rho) = \rho_{\fa} \rho_{\oc}(\omega_\rho) = \rho_{\fa}(\oc \omega_{\rho}) = \oc \rho_{\fa}(\omega_{\rho}) = \oc \chi(\fa) \omega_{\rho},
\]
and so $\chi(c\fa) = \oc\chi(\fa)$.  On the other hand if $c = a/b$ with $a$, $b \in A_+$, then $a\fa = (bc)\fa = b(c\fa)$, and so by the previous argument we have $\oa\chi(\fa) = \ob \chi(c\fa)$ as desired.  For (c), Lemmas~\ref{L:omegarhoOmega0} and~\ref{L:rhofp} imply that
\[
  \rho_{\fp}(\omega_\rho) = f\omega_\rho - f(P)\omega_\rho \quad \Rightarrow \quad \chi(\fp) = f - f(P).
\]
For the final part we note that every non-zero ideal $\fa \subseteq A$ is either principal or equivalent to a prime ideal of degree~$1$, and so $\chi(\fa) \in H(t,y)$ by (a)--(c).
\end{proof}

For a fixed non-zero ideal $\fb \subseteq A$, we now introduce the sum
\[
  \tLambda_{\fb} = \sum_{\fa \sim \fb} \frac{\chi(\fa)}{\pd(\rho_{\fa})},
\]
where the sum is over all integral ideals equivalent to $\fb$.  We note that the function
\[
  c \mapsto c\fb : (\fb^{-1})_+ \to \{ \fa \subseteq A \mid \fa \sim \fb \}
\]
is a bijection.  Using~\eqref{rhofafb} and arguing similarly to Lemma~\ref{L:chiprops}(b), we have
\begin{equation} \label{pdrhocfb}
\pd(\rho_{c\fb}) = c \pd(\rho_{\fb}), \quad c \in \fb^{-1}.
\end{equation}
Taking these together with Lemma~\ref{L:chiprops}, we see that
\begin{equation} \label{tLambdafb}
\tLambda_{\fb} = \frac{\chi(\fb)}{\pd(\rho_{\fb})} \cdot \sum_{c \in (\fb^{-1})_+}
\frac{c(t,y)}{c(\theta,\eta)}.
\end{equation}
By definition, if $\fb' \sim \fb$, then $\tLambda_{\fb'} = \tLambda_{\fb}$, and if $\fb_1, \dots, \fb_h \subseteq A$ represent the ideal classes of $A$, then $\LL(\bA;1) = \tLambda_{\fb_1} + \cdots + \tLambda_{\fb_h}$.

On the other hand in order to work with sums over elements of ideals themselves, so as to appeal to Proposition~\ref{P:partialsums}(b), we change our point of view on $\tLambda_{\fb}$ slightly.  Suppose that $\fa \subseteq A$ is an integral ideal with $\fa \sim \fb^{-1}$.  Then there is a unique $\gamma \in A_+$ so that $\fa\fb = (\gamma)$ and thus the function
\[
  a \mapsto \frac{a}{\gamma} : \fa \to \fb^{-1},
\]
is a bijection.  By~\eqref{tLambdafb}
\[
  \tLambda_{\fb} = \frac{\chi(\fb)}{\pd(\rho_{\fb})} \cdot \sum_{a \in \fa_+} \frac{\oa/\ogamma}{a/\gamma} = \frac{\chi(\gamma \fa^{-1})/\ogamma}{\pd(\rho_{\gamma\fa^{-1}})/\gamma} \cdot \sum_{a \in \fa_+} \frac{a(t,y)}{a(\theta,\eta)}.
\]
This prompts a definition for any non-zero ideal $\fa \subseteq A$: pick any $\gamma \in A_+$ so that $\gamma \fa^{-1} \subseteq A$ and set
\begin{equation} \label{Lambdafa}
  \Lambda_{\fa} := \frac{\chi(\gamma \fa^{-1})/\ogamma}{\pd(\rho_{\gamma\fa^{-1}})/\gamma} \cdot
  \sum_{a \in \fa_+} \frac{a(t,y)}{a(\theta,\eta)}.
\end{equation}
In this way $\Lambda_{\fa} = \tLambda_{\gamma \fa^{-1}}$, and since $\tLambda_{\fb}$ depends only on the ideal class of $\fb$, we see that $\Lambda_{\fa}$ is independent of the choice of $\gamma$.  Thus if we take $\fp_2, \dots, \fp_h \subseteq A$ to be the prime ideals of degree~$1$, which represent the non-trivial ideal classes of $A$, then
\begin{equation} \label{Lambda1sum}
  \LL(\bA;1) = \Lambda_{(1)} + \Lambda_{\fp_2} + \cdots + \Lambda_{\fp_h}.
\end{equation}
This necessitates the study of the quantities $\pd(\rho_{\gamma \fp^{-1}})$ and $\chi(\gamma \fp^{-1})$ for our fixed prime ideal~$\fp$ and corresponding point~$P \in E(\F_q)$ of order $r$.

\begin{lemma} \label{L:gammaquotient}
Let $\gamma \in A_+$ be chosen uniquely so that $\fp^r = (\gamma)$.  For $\sigma = \sigma_\fp \in \Gal(H/K)$, we have
\[
  \frac{\gamma}{\pd(\rho_{\gamma\fp^{-1}})} = \frac{\gamma}{\pd(\rho_{\fp^{r-1}})} =   -f(P)^{\sigma^{-1}}.
\]
\end{lemma}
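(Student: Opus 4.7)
The plan is to reduce everything to computing $\pd$ of an ideal generator of a Galois conjugate of $\rho$ and to use Lemma~\ref{L:rhofp} for that conjugate.

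First, since $\fp^r = (\gamma)$, we have $\gamma\fp^{-1} = \fp^{r-1}$, which immediately gives the first equality in the statement. For the second equality I would start from the factorization~\eqref{rhofafb} applied to $\fa = \fp^{r-1}$ and $\fb = \fp$:
\[
\rho_{(\gamma)} \;=\; \rho_{\fp^r} \;=\; (\fp^{r-1} * \rho)_{\fp}\,\rho_{\fp^{r-1}}.
\]
Taking constant terms with respect to $\tau$ (which is multiplicative on twisted polynomials, since $\pd(AB) = \pd(A)\pd(B)$ for $A,B \in H[\tau]$) and using $\pd(\rho_\gamma) = \iota(\gamma) = \gamma$, I get
\[
\gamma \;=\; \pd\bigl((\fp^{r-1} * \rho)_{\fp}\bigr)\cdot \pd(\rho_{\fp^{r-1}}),
\]
so $\gamma/\pd(\rho_{\fp^{r-1}}) = \pd((\fp^{r-1} * \rho)_{\fp})$.

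Next I would identify $\fp^{r-1} * \rho$ as a specific Galois conjugate. By Hayes' theorem (recalled just before \eqref{chifadef}), $\fp^{r-1}*\rho = \rho^{\sigma_{\fp^{r-1}}} = \rho^{\sigma^{r-1}}$ where $\sigma = \sigma_\fp$. Because the class of $\fp$ has order $r$ in $\Cl(A)$ and the Artin map $\Cl(A) \iso \Gal(H/K)$ is an isomorphism, $\sigma$ has order $r$, so $\sigma^{r-1} = \sigma^{-1}$. Therefore $\fp^{r-1} * \rho = \rho^{\sigma^{-1}}$.

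Finally I would apply the analogue of Lemma~\ref{L:rhofp} to this conjugate Drinfeld-Hayes module. The shtuka function construction in \S\ref{S:DrinfeldModules} is functorial in the action of $\Gal(H/K)$ on coefficients: the shtuka function attached to $\rho^{\sigma^{-1}}$ is $f^{\sigma^{-1}}$ (applying $\sigma^{-1}$ to the coefficients of $f \in H(t,y)$). Repeating the proof of Lemma~\ref{L:rhofp} verbatim with $\rho^{\sigma^{-1}}$ and $f^{\sigma^{-1}}$ in place of $\rho$ and $f$, I obtain $(\rho^{\sigma^{-1}})_\fp = \tau - f^{\sigma^{-1}}(P)$, whose constant term is $-f^{\sigma^{-1}}(P)$. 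Since $P = (t_0,y_0) \in E(\F_q)$ is $\F_q$-rational and $\sigma^{-1}$ fixes $\F_q \subseteq K$, evaluation at $P$ commutes with $\sigma^{-1}$, giving $f^{\sigma^{-1}}(P) = f(P)^{\sigma^{-1}}$. Combining, $\gamma/\pd(\rho_{\fp^{r-1}}) = -f(P)^{\sigma^{-1}}$, as claimed.

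The main obstacle is the bookkeeping in the last step — verifying carefully that the shtuka function for $\rho^{\sigma^{-1}}$ really is $f^{\sigma^{-1}}$ (which one sees by applying $\sigma^{-1}$ to the identity $\rho_t = \theta + x_1\tau + \tau^2$ in~\eqref{taction}, since Proposition~\ref{P:mx1} expresses $\alpha,\beta,m$, and hence $f$, as rational functions of the coefficients of $\rho$), and checking that the commutation of $\sigma^{-1}$ with evaluation at $P$ genuinely uses $\F_q$-rationality of $P$ (which is exactly the same ingredient used inside the proof of Lemma~\ref{L:rhofp}). Once these functorial points are nailed down, the remainder is formal manipulation of~\eqref{rhofafb}.
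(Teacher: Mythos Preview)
Your proposal is correct and follows essentially the same approach as the paper: both factor $\rho_{\ogamma}$ via~\eqref{rhofafb}, take constant terms, and invoke Lemma~\ref{L:rhofp} together with the identification $\fp^{r-1}*\rho = \rho^{\sigma^{r-1}} = \rho^{\sigma^{-1}}$. The paper's version is marginally more streamlined in that it simply applies $\sigma^{r-1}$ to the known formula $\rho_\fp = \tau - f(P)$ rather than re-deriving Lemma~\ref{L:rhofp} for the conjugate module via its shtuka function, but the content is identical.
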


\begin{proof}
The first equality is immediate since $\gamma \fp^{-1} = \fp^{r-1}$.  From~\eqref{rhofafb},
\[
  \rho_{\ogamma} = \rho^{\sigma^{r-1}}_\fp \cdot \rho_{\fp^{r-1}},
\]
and so $\gamma = \pd(\rho_{\ogamma}) = \pd(\rho_{\fp}^{\sigma^{r-1}}) \cdot \pd(\rho_{\fp^{r-1}})$.  Thus we see that
\[
  \frac{\gamma}{\pd(\rho_{\fp^{r-1}})} = \pd(\rho_{\fp}^{\sigma^{r-1}}) =
  -f(P)^{\sigma^{r-1}} =  -f(P)^{\sigma^{-1}}
\]
by Lemma~\ref{L:rhofp}.
\end{proof}

\begin{lemma} \label{L:Vsigma}
For $P \in E(\F_q)$, with corresponding ideal $\fp \subseteq A$, and Galois automorphism $\sigma=\sigma_{\fp} \in\Gal(H/K)$, the following hold.
\begin{enumerate}
\item[(a)] $V^{\sigma} = V-P$.
\item[(b)] $f(P) = f(V^{\sigma})$ and $f(-P) = f(V^{\sigma^{-1}})$.
\item[(c)] $f(V^{\sigma}) f^{\sigma}(V) = f(P) f(-P)^{\sigma} = \theta-t_0$.
\end{enumerate}
\end{lemma}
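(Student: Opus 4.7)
The plan is to prove part (a) first, with (b) and (c) following by further manipulations. For (a), the strategy is to identify $V^\sigma$ by comparing divisors of two functions in the analog of $\Omega_0$ for $\rho^\sigma$ that must agree up to a polynomial factor. I will first show that $\rho_\fp(\omega_\rho) \in \Omega_0$ for $\rho^\sigma$. By Lemma~\ref{L:chiprops}(c), $\rho_\fp(\omega_\rho) = (f - f(P))\omega_\rho$. Applying Hayes' formula \eqref{rhofafb} to the identity $\fp \cdot (a) = (a) \cdot \fp$ for $a \in A$ gives the intertwining $\rho_a^\sigma \rho_\fp = \rho_\fp \rho_a$ in $H[\tau]$. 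Using this together with $\rho_a(\omega_\rho) = \chi(a) \omega_\rho$, the Frobenius-invariance of $\chi(a) \in \bA$, and $\omega_\rho^{(1)} = f \omega_\rho$, a direct computation gives $\rho_a^\sigma(\rho_\fp(\omega_\rho)) = \chi(a) \rho_\fp(\omega_\rho)$ for every $a \in A$. Hence $D_t^\sigma$ and $D_y^\sigma$ both annihilate $\rho_\fp(\omega_\rho)$, and by Proposition~\ref{P:tau-f} applied to $\rho^\sigma$ we obtain $(\tau - f^\sigma)(\rho_\fp(\omega_\rho)) = 0$. Invoking Proposition~\ref{Omegaprop} for $\rho^\sigma$, I may therefore write $\rho_\fp(\omega_\rho) = G \cdot \omega_{\rho^\sigma}$ for some $G \in \bA = \F_q[t,y]$.

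Now compare divisors on $\cU$. By \eqref{divf}, the degree-$2$ function $f - f(P)$ has polar divisor $(V) + (\infty)$ and vanishes at $P$, so its other zero is the unique point $V - P$ satisfying $P + (V - P) = V$; thus $\divisor(f - f(P)) = (P) + (V-P) - (V) - (\infty)$. The explicit product \eqref{omegarhoprod} combined with the observation that $V^{(i)}$ and $\Xi^{(i)}$ lie outside $\cU$ for $i \geq 1$ gives $\divisor(\omega_\rho)|_\cU = (V) - (\Xi)$, and the analogous construction for $\rho^\sigma$ yields $\divisor(\omega_{\rho^\sigma})|_\cU = (V^\sigma) - (\Xi)$. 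Combining these, $\divisor(G)|_\cU = (P) + (V - P) - (V^\sigma)$, which must be effective since $G$ is a polynomial, forcing $V^\sigma \in \{P, V - P\}$. The case $V^\sigma = P$ is excluded: applying $\sigma$ to $V - V^{(1)} = \Xi$ gives $V^\sigma - (V^\sigma)^{(1)} = \Xi$, and if $V^\sigma \in E(\F_q)$ then $(V^\sigma)^{(1)} = V^\sigma$, forcing $\Xi = 0$ and contradicting $\Xi = (\theta, \eta)$. Therefore $V^\sigma = V - P$, proving (a).

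For (b), the factorization of $f - f(P)$ yields $f(V - P) = f(P)$, i.e., $f(V^\sigma) = f(P)$; applying (a) with $\sigma^{-1} = \sigma_{\fp_{-P}}$ gives $V^{\sigma^{-1}} = V + P$, and the analogous argument yields $f(V + P) = f(-P)$. For (c), applying $\sigma$ to $f(V^{\sigma^{-1}}) = f(-P)$ gives $f^\sigma(V) = f(-P)^\sigma$, which combined with (b) establishes the first equality. For the second equality, Hayes' formula \eqref{rhofafb} with $\fa = \fp$, $\fb = \fp_{-P}$ gives $\rho_{\fp \fp_{-P}} = \rho_{\fp_{-P}}^\sigma \rho_\fp = (\tau - f(-P)^\sigma)(\tau - f(P))$ by Lemma~\ref{L:rhofp}, whose constant term is $f(-P)^\sigma f(P)$. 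A short computation in $A$ using the curve equation at $P$ and at $\Xi$ shows $(\eta - y_0)(\eta + y_0 + a_1 t_0 + a_3) \in (\theta - t_0) A$, and a degree count then yields $\fp \fp_{-P} = (\theta - t_0)$, so that $\rho_{\fp \fp_{-P}} = \rho_{t - t_0}$ has constant term $\theta - t_0$, giving the identity. The main obstacle is the careful divisor bookkeeping on $\cU$ in part (a): the global divisors of $\omega_\rho$ and $\omega_{\rho^\sigma}$ on $E$ involve the infinitely many Frobenius twists $V^{(i)}$ and $\Xi^{(i)}$ outside $\cU$, so it is crucial to restrict to $\cU$, where only the four relevant geometric points $P$, $V - P$, $V$, and $V^\sigma$ contribute.
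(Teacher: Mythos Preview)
Your arguments for (b) and (c) are essentially the paper's (the paper phrases (b) via slopes rather than divisors, but it amounts to the same thing). There is one small circularity: in (b) you write ``applying (a) with $\sigma^{-1} = \sigma_{\fp_{-P}}$,'' but the identity $\sigma_{\fp_{-P}} = \sigma^{-1}$ rests on $\fp\,\fp_{-P} = (\theta - t_0)$, which you only establish later in (c). This is easily avoided: from $V^\sigma = V - P$, apply $\sigma^{-1}$ directly to get $V^{\sigma^{-1}} = V + P$.

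Your approach to (a), however, is genuinely different from the paper's, and it has a real gap. The paper argues straight from the definition of the Artin symbol: reducing $V - V^{(1)} = \Xi$ modulo a prime $\fP$ of $B$ above $\fp$ gives $V - P \equiv V^{(1)} \pmod{\fP}$, and since $\sigma_\fp$ is characterized by $\alpha^{\sigma} \equiv \alpha^q$, $\beta^\sigma \equiv \beta^q \pmod{\fP}$, one reads off $V^\sigma = V - P$. You instead invoke ``Proposition~\ref{Omegaprop} for $\rho^\sigma$'' to write $\rho_\fp(\omega_\rho) = G \cdot \omega_{\rho^\sigma}$ with $G \in \bA$. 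The trouble is that the proof of Proposition~\ref{Omegaprop} hinges on $\omega_\rho \in \TT[y]^\times$, which holds because all zeros and poles of the product~\eqref{omegarhoprod} lie outside the closed unit disk. For $\rho^\sigma$ this fails: since $V \in \Ehat(\fM_\infty)$ reduces to $\infty$ at the infinite place and reduction is a group homomorphism, $V^\sigma = V + P'$ reduces to $P'$, so whenever $\sigma \neq \mathrm{id}$ (equivalently $P' \neq \infty$) we have $\lvert \alpha^\sigma \rvert = \lvert t(V^\sigma) \rvert \leq 1$. Thus $V^\sigma$ sits inside the closed unit disk, the product defining $\omega_{\rho^\sigma}$ does not converge in $\TT$, and even if one defines $\omega_{\rho^\sigma}$ as the Anderson generating function $G_{\pi_{\rho^\sigma}}$, it vanishes at a point of the unit disk and is not a unit in $\TT[y]$. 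The step ``$G \in \bA$'' is therefore unjustified.

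Your strategy can be rescued without ever writing down $\omega_{\rho^\sigma}$. From $(\tau - f^\sigma)\bigl((f - f(P))\omega_\rho\bigr) = 0$ and $\omega_\rho^{(1)} = f\omega_\rho$ you get the \emph{rational} identity
\[
  f^\sigma \;=\; \frac{f \cdot (f - f(P))^{(1)}}{\,f - f(P)\,}.
\]
Using $P^{(1)} = P$ and your computation of $\divisor(f - f(P))$, the right-hand side has divisor $((V-P)^{(1)}) - (V-P) + (\Xi) - (\infty)$; matching this against $\divisor(f^\sigma) = ((V^\sigma)^{(1)}) - (V^\sigma) + (\Xi) - (\infty)$ forces $V^\sigma = V - P$ (the only other way to match supports would require $(V-P)^{(1)} = V - P$, which is impossible since $V \notin E(\overline{\F}_q)$).
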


\begin{proof}
We observe first that $V^\sigma$ also satisfies equation \eqref{Vdef}, namely $(1-\Fr)(V^\sigma) = \Xi$, and thus $V^\sigma$ is equal to $V+P'$ for some $P'\in E(\F_q)$.  Coordinate-wise we have the identity $\Xi \equiv P \pmod{\fp}$, and therefore from \eqref{Vdef} we have $V - P \equiv V^{(1)} \pmod{\fp}$.  It follows that
\[
t(V-P) \equiv \alpha^q \pmod{\fp}, \quad
y(V-P) \equiv \beta^q \pmod{\fp}.
\]
Since $H = K(\alpha,\beta)$, the definition of the Artin automorphism $\sigma_{\fp}$ implies that $t(V-P) = t(V^{\sigma})$ and $y(V-P) = y(V^{\sigma})$, confirming that $P' = -P$ and proving (a).

For (b) we see from \eqref{fdef} that
\[
  f(P) = -m+\frac{y_0+\beta+a_1\alpha + a_3}{t_0-\alpha},
\]
and we notice that the fraction on the right-hand side is the slope of the line connecting the three points $P$, $-V$, and $V-P$.  However, since $V-P = V^\sigma$ we observe that
\[
f(P) = -m + \frac{y(V^\sigma) + \beta+a_1\alpha + a_3}{t(V^\sigma) - \alpha} = f(V^\sigma).
\]
Arguing analogously we find that $f(-P) = f(V^{\sigma^{-1}})$.

Now, (b) implies $f(V^\sigma)f^\sigma(V) = f(P)f(-P)^\sigma$, which is the first part of (c).  To prove the other equality in (c), we let $\fq = (\theta-t_0,\eta+y_0+a_1 t_0 + a_3)$ be the ideal corresponding to $-P$.  Since $\fp\fq = (\theta-t_0)$, we apply~\eqref{rhofafb} to find
\[
  \rho_{t-t_0} = \rho_{\fp\fq} = \rho^{\sigma}_{\fq} \rho_{\fp},
\]
which implies $\theta - t_0 = \pd(\rho^{\sigma}_{\fq}) \pd(\rho_{\fp})$.  By Lemma~\ref{L:rhofp}, $\pd(\rho_\fp) = f(P)$ and $\pd(\rho^{\sigma}_{\fq}) = f(-P)^{\sigma}$.
\end{proof}

Consider the function $f - f(P)$.  We see that it has simple poles at $V$ and $\infty$ and no other poles, and it vanishes at $P$.  It follows that
\begin{equation} \label{divf-fP}
  \divisor(f-f(P)) = (P) + (V-P) - (V) - (\infty).
\end{equation}
Recall also the function $G_{\fp}$ from~\eqref{Gkeq}.  The following lemma gives a description of $G_{\fp}^{(1)}$ evaluated at $\Xi$.

\begin{lemma} \label{L:Gfptwist}
Let $P \in E(\F_q)$, with corresponding ideal $\fp \subseteq A$, and Galois automorphism $\sigma=\sigma_{\fp} \in\Gal(H/K)$.  For the function $G_{\fp}$ defined in \eqref{Gkeq},
\[
  G_{\fp}^{(1)}(\Xi) = - \frac{f(P) \delta(P)}{\delta^{(1)}(P)} = - f^{\sigma^{-1}}(P).
\]
\end{lemma}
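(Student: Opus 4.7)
The plan is to establish the two equalities separately. The first, $G_\fp^{(1)}(\Xi) = -f(P)\delta(P)/\delta^{(1)}(P)$, will follow from an explicit computation using the formulas \eqref{Gkeq} and \eqref{fdef} for $G_\fp$ and $f$, combined with the collinearity relations defining the slopes $m$ and $m_\fp$. The second, $-f(P)\delta(P)/\delta^{(1)}(P) = -f^{\sigma^{-1}}(P)$, will be reduced via Lemma~\ref{L:Vsigma}(c) to a polynomial identity for $\nu(P)\nu(-P)$ that arises from intersecting the line through $V^{(1)}, -V, \Xi$ with the cubic~$E$.

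For the first equality, I twist the formula \eqref{Gkeq} to obtain $G_\fp^{(1)} = (y - y_0 - m_\fp^q(t-t_0))/\delta^{(1)}$ and evaluate at $\Xi = (\theta, \eta)$. Cross-multiplying the desired equality and simplifying reduces it to
\[
m_\fp^q(t_0 - \alpha^q) - m(\theta - \alpha^q) = y_0 - \eta.
\]
Applying the identity $\eta - \beta^q = m(\theta - \alpha^q)$ from~\eqref{mquotients} further reduces this to $y_0 - \beta^q = m_\fp^q(t_0 - \alpha^q)$, which is simply the fact that $P$ and $V^{(1)}$ both lie on the line $\nu_\fp^{(1)}$ of slope $m_\fp^q$.

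For the second equality, since $P \in E(\F_q)$ is coordinate-wise fixed by $\sigma^{-1}$, we have $f^{\sigma^{-1}}(P) = f(P)^{\sigma^{-1}}$. Applying Lemma~\ref{L:Vsigma}(c) to the point $-P$, whose associated Artin automorphism is $\sigma^{-1}$, yields $f(-P)\, f(P)^{\sigma^{-1}} = \theta - t_0$, so $f^{\sigma^{-1}}(P) = (\theta - t_0)/f(-P)$. Since $f = \nu/\delta$ and $\delta(-P) = \delta(P)$, the claim becomes the polynomial identity
\[
\nu(P)\,\nu(-P) = (\theta - t_0)\,\delta(P)\,\delta^{(1)}(P) = (\theta - t_0)(t_0 - \alpha)(t_0 - \alpha^q).
\]

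The main obstacle is this last identity, which I will prove by a geometric factorization argument. Since $V^{(1)}, -V, \Xi$ are collinear on the line $y = \eta + m(t - \theta)$, substituting into the Weierstrass equation \eqref{Eeq} yields a monic cubic in $t$ whose roots are precisely $\alpha^q, \alpha, \theta$, and which therefore equals $(t-\alpha)(t-\alpha^q)(t-\theta)$. Evaluating at $t = t_0$ recovers the right-hand side of the identity. On the other hand, writing $c = \eta + m(t_0 - \theta)$, one has $\nu(P) = y_0 - c$ and $\nu(-P) = (-y_0 - a_1 t_0 - a_3) - c$, and the Vieta relations for the two $y$-values of $E$ above $t = t_0$ show that the same cubic evaluated at $t_0$ equals $-\nu(P)\nu(-P)$. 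Combining these two expressions yields the identity, and hence the lemma.
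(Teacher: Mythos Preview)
Your proof is correct and takes a genuinely different route from the paper's.

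For the first equality, the paper argues by divisors: it shows that $G_\fp^{(1)} - G_\fp^{(1)}(\Xi)$ and $f\delta/\delta^{(1)}$ have the same divisor and the same $\tsgn$, hence are equal, and then evaluates at $P$. Your direct computation via the slopes $m$ and $m_\fp^q$ is more elementary and avoids the divisor machinery entirely; the key input is just that $P$ and $V^{(1)}$ lie on the twisted line $\nu_\fp^{(1)}$. For the second equality, the paper again uses a divisor comparison, this time to establish the functional identity $f^{\sigma^{-1}}(f-f(-P)) = f\,(f^{(1)} - f^{(1)}(-P))$, and then evaluates at $P$ using an explicit formula for $f(P)-f(-P)$. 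You instead invoke Lemma~\ref{L:Vsigma}(c) applied to $-P$ (with Artin automorphism $\sigma^{-1}$) to reduce to the identity $\nu(P)\nu(-P) = (\theta - t_0)\delta(P)\delta^{(1)}(P)$, which you prove by intersecting the line $\nu = 0$ with the Weierstrass cubic and using Vieta's relations for the two $y$-values over $t_0$. This is a pleasant and self-contained argument. One small sign slip: evaluating your monic cubic $(t-\alpha)(t-\alpha^q)(t-\theta)$ at $t_0$ gives the \emph{negative} of the right-hand side $(\theta - t_0)\delta(P)\delta^{(1)}(P)$, not the right-hand side itself; but since you also show the cubic at $t_0$ equals $-\nu(P)\nu(-P)$, the two minus signs cancel and the conclusion stands.

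The paper's divisor approach has the advantage of producing global functional identities (not just values at $\Xi$ or $P$) that may be reused elsewhere, while your approach is more direct and requires less bookkeeping.
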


\begin{proof}
We see that $\divisor(G_{\fp}^{(1)}) = (P) + (-V^{(1)}-P) - (-V^{(1)}) - (\infty)$.  Let
$\Gamma := G_{\fp}^{(1)} - G_{\fp}^{(1)}(\Xi)$.  Now just as in the analysis of~\eqref{divf-fP}, we deduce that
\[
  \divisor(\Gamma) = (-V) + (\Xi) - (-V^{(1)}) - (\infty) = \divisor \biggl(
  \frac{f\delta}{\delta^{(1)}} \biggr),
\]
where the last equality is checked directly.  Also $\tsgn(\Gamma) = 1 = \tsgn(f\delta/\delta^{(1)})$, and so
\[
  G_{\fp}^{(1)} - G_{\fp}^{(1)}(\Xi) = \frac{f\delta}{\delta^{(1)}}.
\]
Evaluating both sides at $P$ yields
\[
  G_{\fp}^{(1)}(\Xi) = -\frac{f(P) \delta(P)}{\delta^{(1)}(P)},
\]
which is the first equality.  Now by Lemma~\ref{L:Vsigma}(a), $\divisor(f^{\sigma^{-1}}) = (V^{(1)}+P) - (V+P) + (\Xi) - (\infty)$, and thus by~\eqref{divf-fP},
\[
  \divisor \biggl( \frac{f^{\sigma^{-1}} \cdot (f - f(-P))}{f^{(1)} - f^{(1)}(-P)} \biggr)
  = (V^{(1)}) - (V) + (\Xi) - (\infty) = \divisor(f).
\]
The functions have the same sign, so
\begin{equation} \label{fsigmaprod}
  \frac{f^{\sigma^{-1}} \cdot (f - f(-P))}{f^{(1)} - f^{(1)}(-P)} = f.
\end{equation}
A quick calculation from~\eqref{fdef} reveals that
\[
  f(P) - f(-P) = \frac{2y_0 + a_1t_0 + a_3}{\delta(P)},
\]
and moreovoer, $f^{(1)}(P) - f^{(1)}(-P) = (f(P) - f(-P))^{(1)} = (2y_0 + a_1t_0 + a_3)/\delta^{(1)}(P)$, since~$P$ is $\F_q$-rational.  Therefore, evaluating both sides of~\eqref{fsigmaprod} at $P$ yields the second equality in the statement of the lemma.
\end{proof}

\begin{proof}[Proof of Theorem~\ref{T:Lambdavalue1}]
If we take $\fa = (1)$ in \eqref{Lambdafa}, then we quickly see that $\Lambda_{(1)} = L(\bA;1)$.  By Theorem~\ref{T:Lvalue1} and~\eqref{Lambda1sum}, we are done if we can show that for a prime ideal $\fp = (\theta-t_0,\eta-y_0) \subseteq A$, with corresponding point $P = (t_0,y_0) \in E(\F_q)$ and Artin automorphism $\sigma = \sigma_{\fp} \in \Gal(H/K)$,
\begin{equation} \label{LambdafpRed}
  \Lambda_{\fp} = -\biggl( \frac{\delta^{(1)}}{f} \biggr)^{\sigma^{-1}} \cdot \frac{\pi_\rho}{\omega_\rho}.
\end{equation}
We calculate $\Lambda_\fp$ by starting with \eqref{Lambdafa}.  Choose $\gamma \in A_+$ so that $\fp^r = (\gamma)$, where $r$ is the order of $\fp$ in $\Cl(A)$.  Let $\fq = (\theta -t_0, \eta+y_0 + a_1t_0 + a_3)$ correspond to $-P$, so that $\fp\fq = (\theta-t_0)$ and therefore $(\theta-t_0) \gamma \fp^{-1} = \gamma \fq$.  By Lemma~\ref{L:chiprops},
\[
\chi(\gamma\fq) = \ogamma \chi(\fq)  = \ogamma(f - f(-P)),
\]
but on the other hand $\chi(\gamma \fq) = \chi((\theta-t_0) \gamma \fp^{-1}) = (t-t_0) \chi(\gamma \fp^{-1})$.  Combining these equations,
\begin{equation} \label{chigammafpinv}
\chi(\gamma\fp^{-1}) = \frac{\ogamma (f - f(-P))}{t-t_0}.
\end{equation}
Substituting this equation and the formula from Lemma~\ref{L:gammaquotient} into~\eqref{Lambdafa}, we have
\begin{equation} \label{Lambdafp1}
  \Lambda_{\fp} = -\frac{f^{\sigma^{-1}}(P) (f - f(-P))}{t-t_0} \sum_{a \in \fp_+}
  \frac{a(t,y)}{a(\theta,\eta)}.
\end{equation}
By Proposition~\ref{P:partialsums}(b), we see that
\begin{equation} \label{sumSpj1}
  \sum_{a \in \fp_+} \frac{a(t,y)}{a(\theta,\eta)} = \lim_{i \to \infty} \sum_{j=0}^{i+1} \cS_{\fp,j}
   = \lim_{i \to \infty} \frac{\nu^{(i)}(\Xi)}{\nu^{(i)}} \cdot \frac{g_{\fp,i+1}^{(1)}}{g_{\fp,i+1}^{(1)}(\Xi)}
  \cdot \frac{f^{(1)} \cdots f^{(i)}}{f^{(1)} \cdots f^{(i)}|_{\Xi}}.
\end{equation}
We recall from \eqref{gdeltaquotient} that $g_{\fp,i+1} = \delta \cdot \lambda_{\fp,i+1} \cdot G_{\fp}$, and in a similar manner to the proof of Proposition~\ref{P:quotientlimits}(b), we verify that in $\TT[y]$ we have $\lim_{i \to \infty} \lambda_{\fp,i+1}^{(1)}/\lambda_{\fp,i+1}^{(1)}(\Xi) = 1$,
and so
\[
  \lim_{i \to \infty} \frac{g_{\fp,i+1}^{(1)}}{g_{\fp,i+1}^{(1)}(\Xi)} =
  \frac{\delta^{(1)}}{\delta^{(1)}(\Xi)} \cdot \frac{G_{\fp}^{(1)}}{G_{\fp}^{(1)}(\Xi)}.
\]
Thus using this limit and Proposition~\ref{P:quotientlimits} in~\eqref{sumSpj1}, we find
\[
  \sum_{a \in \fp_+} \frac{a(t,y)}{a(\theta,\eta)} = - \frac{G_{\fp}^{(1)}}{G_{\fp}^{(1)}(\Xi)} \cdot \frac{\delta^{(1)} \cdot \pi_{\rho}}{f\omega_\rho},
\]
and substituting this into~\eqref{Lambdafp1} and using Lemma~\ref{L:Gfptwist}, we obtain
\begin{equation} \label{Lambdafp2}
  \Lambda_{\fp} = - \frac{(f-f(-P)) G_{\fp}^{(1)}}{t-t_0} \cdot \frac{\delta^{(1)} \cdot \pi_\rho}{f \omega_\rho}.
\end{equation}
Now by~\eqref{divGfp} and~\eqref{divf-fP},
\begin{equation} \label{oddquotients}
\begin{split}
  \divisor \biggl( \frac{(f-f(-P)) G_{\fp}^{(1)}}{t-t_0} \biggr)
  &= (V+P) - (V) + (-V^{(1)} - P) - (-V^{(1)}) \\
  &= \begin{aligned}[t]
    \bigl( &{-(-V^{(1)})} - (V) + (\Xi) + (\infty) \bigr) \\
    &{}+ \bigl( (-V^{(1)} - P) + (V+P) - (\Xi) - (\infty) \bigr).
    \end{aligned}
    \\
  &= \divisor \biggl( \frac{f}{\delta^{(1)}} \cdot \biggl( \frac{ \delta^{(1)}}{f} \biggr)^{\sigma^{-1}} \biggr),
\end{split}
\end{equation}
where the last equality follows from Lemma~\ref{L:Vsigma}(a).  Since both functions have $\tsgn = 1$, they are equal.  Substituting into~\eqref{Lambdafp2} we obtain~\eqref{LambdafpRed}, which concludes the proof.
\end{proof}

\begin{proof}[Proof of Corollary~\ref{C:Lambdaclassno}]
By Theorem~\ref{T:pirho} and its proof we see that $\bigl(\delta^{(1)} \pi_\rho / f \omega_\rho\bigr)\big|_{\Xi} = -1$, and so by Theorem~\ref{T:Lambdavalue1},
\[
  \LL(\bA;1) \big|_{\Xi} = \sum_{\sigma \in \Gal(H/K)} \frac{f}{\delta^{(1)}} \cdot \biggl( \frac{ \delta^{(1)}}{f} \biggr)^{\sigma^{-1}} \bigg|_{\Xi}.
\]
By~\eqref{oddquotients}, since $f(\Xi) = 0$, for each $\sigma \in \Gal(H/K)$ we have
\[
   \frac{f}{\delta^{(1)}} \cdot \biggl( \frac{ \delta^{(1)}}{f} \biggr)^{\sigma^{-1}} \bigg|_{\Xi}
   = -\frac{f(-P) G_{\fp}^{(1)}(\Xi)}{\theta-t_0}
   = \frac{f(-P) f^{\sigma^{-1}}(P)}{\theta-t_0} = 1,
\]
where the second equality follows from Lemma~\ref{L:Gfptwist} and the third from Lemma~\ref{L:Vsigma}(c).
\end{proof}

\section{Proof of a theorem of Anderson} \label{S:LogAlg}

In this section we use the techniques of \S\ref{S:DrinfeldModules}--\ref{S:LSeries} to give a new proof of one of the main results of Anderson~\cite{And94} for our Drinfeld module~$\rho$.

\begin{theorem}[{Anderson~\cite[Thm.~5.1.1]{And94}}] \label{T:Anderson}
Let $B$ be the integral closure of $A$ in $H$.  For any $b \in B$, the power series
\[
  \cE(b;z) := \exp_\rho \Biggl( \sum_{\fa \subseteq A} \frac{b^{\sigma_{\fa}}}{\pd(\rho_{\fa})}\, z^{q^{\deg \fa}} \Biggr) \in \power{H}{z}
\]
is in fact a polynomial in $H[z]$ of degree equal to $q^{q - 1 + \deg b}$.  Moreover, if we let $r = q-1+\deg b$, then we can find $b_1, \dots, b_{r-1} \in H$ so that $\cE(b;z) = bz + b_1z^q + \cdots + b_{r-1}z^{q^{r-1}} + z^{q^r}$.
\end{theorem}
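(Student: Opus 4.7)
The plan is to adapt Thakur's proof~\cite[\S 8.10]{Thakur} of the Carlitz log-algebraicity to the present elliptic setting, using as inputs the explicit reciprocal-sum formulas of Section~\ref{S:sums} together with the class-group decomposition developed in Section~\ref{S:LSeries}.  Since $\exp_\rho$ is $\F_q$-linear, we have $\cE(b;z) = \sum_{r \geq 0} c_r z^{q^r}$ with
\[
  c_r = \sum_{k=0}^{r} \frac{T_{r-k}^{q^k}}{d_k}, \qquad T_i := \sum_{\substack{\fa \subseteq A \\ \deg \fa = i}} \frac{b^{\sigma_\fa}}{\pd(\rho_\fa)},
\]
so it suffices to show that $c_0 = b$, that $c_r \in H$ for every $r$, that $c_r = 0$ for $r > r_0 := q - 1 + \deg b$, and that $c_{r_0} = 1$.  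The identity $c_0 = b$ is immediate from the $\fa = A$ term.

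First I would compute $T_i$ by grouping the sum over $\fa$ by ideal class, using that $\sigma_\fa$ depends only on $[\fa] \in \Cl(A)$.  For the trivial class this contributes $b \cdot S_i$.  For the nontrivial class $[\fp^{-1}]$ of a degree one prime $\fp$, the bijection $\{\fa \subseteq A : \fa \sim \fp^{-1}\} \leftrightarrow \fp_+$ used in the proof of Theorem~\ref{T:Lambdavalue1} (sending $\fa$ to the monic generator $c$ of $\fa\fp$), combined with \eqref{rhofafb} and Lemma~\ref{L:rhofp}, gives $1/\pd(\rho_\fa) = -f(P)^{\sigma_\fp^{-1}}/c$ for $c \in \fp_{(i+1)+}$.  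Assembling all classes yields
\[
  T_i = b \cdot S_i \;-\; \sum_{\fp} b^{\sigma_\fp^{-1}} f(P)^{\sigma_\fp^{-1}}\, S_{\fp, i+1},
\]
where the sum runs over the degree one primes $\fp \subset A$, which represent the $h(\bA) - 1$ nontrivial classes of $\Cl(A)$.

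Next I would substitute Theorem~\ref{T:Siformula} for $S_i$ and $S_{\fp,i+1}$ together with \eqref{diformula} for $d_k$ into the expression for $c_r$, which recasts it as a specific $H$-rational combination of twists of $f, \nu, g_i, g_{\fp,i+1}$ evaluated at $\Xi$ and at the iterated Frobenius points $\Xi^{(k)}$.  The heart of the proof is then to verify the vanishing $c_r = 0$ for $r > r_0$ via a telescoping cancellation between the trivial-class term $b \cdot S_i$ and the nontrivial contributions $-b^{\sigma_\fp^{-1}} f(P)^{\sigma_\fp^{-1}} S_{\fp,i+1}$, very much in the spirit of Proposition~\ref{P:partialsums}; the Galois identities of Lemma~\ref{L:Vsigma}, which relate $f(P)^{\sigma_\fp^{-1}}$ to the shtuka function evaluated at points in the orbit of $V$, should supply the bridge between the two contributions.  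The precise threshold $r_0 = q - 1 + \deg b$ at which cancellation first fails, as well as the identification $c_{r_0} = 1$, would be read off by tracking $\tsgn$-leading terms as in the proofs of Section~\ref{S:sums}, with $c_{r_0} = 1$ reflecting the monicity $\tsgn(f) = 1$.

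The main obstacle is this cancellation step: establishing the exact interference among contributions from all $h(\bA)$ classes simultaneously.  Unlike the $\F_q[t]$ case treated by Thakur, where only the trivial class contributes and the shtuka is the particularly simple $t - \theta$, here one must combine the richer shtuka $f = \nu/\delta$ with the auxiliary linear factors $g_i$ and $g_{\fp,i+1}$ across every nontrivial class.  The bookkeeping runs parallel to, but is more delicate than, the analysis of $L$-values in Section~\ref{S:LSeries}; once accomplished it should additionally furnish the ``decomposition via the shtuka function'' for the polynomials $b_j$ alluded to in Remark~\ref{Rem:Ebz}.
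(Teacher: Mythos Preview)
Your setup through the class-group decomposition of $T_i$ is correct and matches exactly what the paper does in the proof of Proposition~\ref{P:cili}; indeed that proposition packages your formula for $T_i$ (which is the paper's $c_i$, the coefficient of $\cL(b;z)$) as
\[
  T_i\,\ell_i \;=\; -\biggl(\frac{f\cdot g_b}{\delta^{(1)}}\biggr)^{(i)}\bigg|_{\Xi},
\]
where $g_b = \sum_{\sigma\in\Gal(H/K)} (\ob\cdot\cF)^{\osigma}$ is a single rational function on $E$ built from the shtuka data (see~\eqref{Fdef}--\eqref{gbdef}).  This is the idea your sketch is missing: rather than attempting a direct telescoping cancellation among the $c_r$ of $\cE(b;z)$, the paper shows via a divisor computation (Lemma~\ref{L:kappaR} and Proposition~\ref{P:gb}) that $f\cdot g_b$ lies in the dual $\bA$-motive $N=\Gamma(U,\cO_E(-(V^{(1)})))$ and has pole order exactly $q+1+\deg b$ at $\infty$.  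The finite expansion of $-f\cdot g_b$ in the $L$-basis~\eqref{dualbasis} of $N$, twisted by $(i)$ and evaluated at $\Xi$, then yields $\cL(b;z)=\log_\rho$ of an explicit polynomial of degree $q^{r_0}$, so $\cE(b;z)$ is that polynomial.  The leading coefficient $1$ and the constant term $b$ fall out from $\tsgn(-f\cdot g_b)=1$ and a residue calculation at $\Xi$.

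Your proposed route of verifying $\sum_{k=0}^{r} T_{r-k}^{q^k}/d_k = 0$ for $r>r_0$ by brute substitution of Theorem~\ref{T:Siformula} and~\eqref{diformula} is not obviously wrong, but without the packaging via $g_b$ there is no evident telescoping mechanism: Proposition~\ref{P:partialsums} telescopes partial sums of $\cS_j$ \emph{as functions on $E$}, whereas here you would need an identity among scalars obtained by specializing twisted products at distinct points $\Xi^{(k)}$.  The paper's passage through $N$ is precisely what converts the problem into a single finite divisor bound, and it is this step---not Lemma~\ref{L:Vsigma} alone---that ``supplies the bridge'' and simultaneously produces the shtuka decomposition of $\cE(b;z)$ referenced in Remark~\ref{Rem:Ebz}.
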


\begin{remark}
Anderson proved that $\cE(b;z)$ is actually an element of $B[z]$, and it would be interesting to see how the methods here can be used to establish the $B$-integrality of $\cE(b;z)$, which is not immediately apparent.  Anderson subsequently proved a more general result in~\cite[Thm.~3]{And96}, which includes an additional twisting parameter in a second variable.  We suspect the methods of this section could be used to approach this result as well, but we do not pursue it here.  Recently Angl\`{e}s, Ngo Doc, and Tavares Ribeiro~\cite{AnglesNgoDacRibeiro17} have given another proof of \cite[Thm.~3]{And96} using class module formulas and Stark units.
\end{remark}

\begin{remark}
Our methods do achieve a slight improvement over Anderson's original theorem.  Anderson proved that the degree in $z$ of $\cE(b;z)$ is bounded above by $q^{q-1 + \max_{\fa}( \deg b^{\sigma_{\fa}})}$, but we find the exact value and show that the leading term of $\cE(b;z)$ is in fact $z^{q^{q-1+\deg b}}$ with coefficient~$1$.
\end{remark}

Originally Theorem~\ref{T:Anderson} was inspired by work of Thakur~\cite{Thakur92} that expressed Goss zeta values for $A$ at $s=1$ in terms of $\log_\rho$ evaluated at algebraic arguments when the class number of $A$ is~$1$.  Later Thakur~\cite[\S 8.10]{Thakur} gave a proof of Anderson's main theorem of~\cite{And96} in the case $A = \F_q[\theta]$, using formulas for power sums and reciprocal sums of polynomials due to Carlitz, Gekeler, and Lee.  Our methods here are similar to Thakur's (who incidentally also encountered obstacles on $\F_q[\theta]$-integrality in some cases).  Broadly, our main tools in proving Theorem~\ref{T:Anderson} are the summation formulas of~\S\ref{S:sums} combined with evaluation properties of the shtuka function from~\S\ref{S:LSeries}.

For $b \in B$, we define $\cL(b;z)$ so that $\cE(b;z) = \exp_\rho(\cL(b;z))$,
\begin{equation} \label{andersonsum}
\cL(b;z) := \sum_{\fa \subseteq A} \frac{b^{\sigma_{\fa}}}{\pd(\rho_{\fa})}\, z^{q^{\deg \fa}} = \sum_{i=0}^\infty c_i z^{q^i} \in \power{H}{z},
\end{equation}
and at the same time fix the sequence of coefficients $\{ c_i \} \subseteq H$.  For a non-zero ideal $\fa \subseteq A$ we pick $\gamma \in A_+$ so that $\gamma \fa^{-1} \subseteq A$, and we set
\begin{equation} \label{cLfasum}
  \cL_{\fa}(b;z) := \frac{\gamma\cdot b^{\sigma_{\fa}^{-1}}}{\pd(\rho_{\gamma \fa^{-1}})} \sum_{a \in \fa_+} \frac{z^{q^{\deg a - \deg \fa}}}{a} \in \power{H}{z}.
\end{equation}

\begin{lemma} \label{L:cLdecomp}
Let $b \in B$.
\begin{enumerate}
\item[(a)] For a non-zero ideal $\fa \subseteq A$, the series $\cL_{\fa}(b;z)$ in \eqref{cLfasum} is independent of the choice of $\gamma \in A_+$ and depends only on the ideal class of $\fa$ in $\Cl(A)$.
\item[(b)] If $\fa_1, \dots, \fa_h \subseteq A$ represent the ideal classes of $A$, then
\[
  \cL(b;z) = \cL_{\fa_1}(b;z) + \cdots + \cL_{\fa_h}(b;z).
\]
\end{enumerate}
\end{lemma}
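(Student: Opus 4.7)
The plan is to prove both parts by leveraging equation~\eqref{pdrhocfb} together with the natural bijection between integral ideals in a given ideal class and monic elements of a class representative.

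For part~(a), I would first verify independence from $\gamma$. Given $\gamma_1, \gamma_2 \in A_+$ with $\gamma_i \fa^{-1} \subseteq A$, set $c = \gamma_1/\gamma_2$, which lies in $K^{\times}$ with $\sgn(c) = 1$ and satisfies $c \cdot (\gamma_2 \fa^{-1}) = \gamma_1 \fa^{-1} \subseteq A$. Applying~\eqref{pdrhocfb} yields $\pd(\rho_{\gamma_1 \fa^{-1}}) = c \cdot \pd(\rho_{\gamma_2 \fa^{-1}})$, so $\gamma_1/\pd(\rho_{\gamma_1 \fa^{-1}}) = \gamma_2/\pd(\rho_{\gamma_2 \fa^{-1}})$. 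For invariance of $\cL_\fa(b;z)$ under the ideal class of $\fa$, write $\fa' = d\fa$ with $d \in K^{\times}$, and by rescaling by a unit in $\F_q^{\times}$ assume $\sgn(d) = 1$. Then $a \mapsto da$ is a bijection $\fa_+ \iso \fa'_+$ preserving $\deg(a) - \deg(\fa)$, and $\gamma' := d\gamma$ is an admissible choice for~$\fa'$ with $\gamma' (\fa')^{-1} = \gamma\fa^{-1}$. Combined with $\sigma_{\fa'} = \sigma_{\fa}$, direct substitution yields $\cL_{\fa'}(b;z) = \cL_{\fa}(b;z)$.

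For part~(b), partition the defining sum for $\cL(b;z)$ according to ideal class. If $\fb \subseteq A$ is integral with $[\fb] = [\fa_i^{-1}]$, then $\fb \fa_i$ is principal, and by sign-normalization there is a unique $a \in (\fa_i)_+$ with $\fb \fa_i = (a)$. The assignment $\fb \mapsto a$ gives a bijection between the integral ideals $\fb \subseteq A$ in the class $[\fa_i^{-1}]$ and $(\fa_i)_+$. Under this correspondence $\sigma_\fb = \sigma_{\fa_i}^{-1}$ and $\deg \fb = \deg a - \deg \fa_i$, while the factorization $\fb = (a/\gamma) \cdot (\gamma \fa_i^{-1})$ combined with~\eqref{pdrhocfb} gives $\pd(\rho_\fb) = (a/\gamma) \cdot \pd(\rho_{\gamma \fa_i^{-1}})$. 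Substituting these identities into the class-grouped sum and pulling the resulting constants out of the sum over $a$ produces precisely $\cL_{\fa_i}(b;z)$. Since inversion is a bijection on $\Cl(A)$, the classes $[\fa_i^{-1}]$ still exhaust $\Cl(A)$ as $i$ ranges over $1, \dots, h$, so summing over $i$ completes the proof.

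There is no deep obstacle here; the main care needed is sign bookkeeping (so that~\eqref{pdrhocfb} applies with $\sgn$-$1$ scalars) and the existence of some $\gamma \in A_+$ with $\gamma \fa^{-1} \subseteq A$, which follows because every non-zero integral ideal of $A$ contains a monic element.
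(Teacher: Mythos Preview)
Your proof is correct and follows essentially the same approach as the paper: both verify independence from $\gamma$ via~\eqref{pdrhocfb}, handle class-invariance by rescaling, and prove (b) by exhibiting the bijection between integral ideals in a fixed class and monic elements of a representative, then identifying the resulting sum with $\cL_{\fa_i}(b;z)$. The only point you might make explicit is that $\gamma' = d\gamma$ actually lies in $A_+$ (which follows since $(d\gamma) = (d)(\gamma) \subseteq \fa'\fa^{-1}\cdot\fa = \fa' \subseteq A$), but this is a routine check.
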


\begin{proof}
This formulation for $\cL(b;z)$ is due to Lutes~\cite[\S V]{LutesThesis}, and the arguments are similar to the definition of $\Lambda_\fa$ in \eqref{Lambdafa} and the sum for $\LL(\bA;1)$ in \eqref{Lambda1sum}.  The independence from the choice of $\gamma$ is shown using~\eqref{pdrhocfb}: if $\fb = \gamma \fa^{-1}$ and $\fc = \gamma' \fa^{-1}$ are both integral ideals with $\gamma,\,\gamma' \in A_+$, then $\gamma' \fb = \gamma \fc$ and so by \eqref{pdrhocfb}, $\gamma' \cdot \pd(\rho_\fb) = \gamma \cdot \pd(\rho_\fc)$, which yields the desired result.  Showing that $\cL_{\fa}(b;z)$ depends only on the ideal class of $\fa$ involves a similar calculation and we skip the details.  For (b), we note that if $\fc$ is an integral ideal and $c \in A_+$ is chosen so that $\fb := c\fc^{-1} \subseteq A$, then the sum $\cL(b;z)$ from \eqref{andersonsum} taken only over ideals $\fa\neq 0$ equivalent to $\fc$ equals
\begin{equation} \label{cLfb}
  \sum_{\fa \sim \fc} \frac{b^{\sigma_{\fa}}\cdot z^{q^{\deg \fa}}}{\pd(\rho_{\fa})} = \sum_{v \in (\fc^{-1})_+}
  \frac{b^{\sigma_{\fa}} \cdot z^{q^{\deg v + \deg \fc}}}{\pd(\rho_{v\fc})}
  = \sum_{u \in \fb_+} \frac{b^{\sigma_{\fa}}\cdot z^{q^{\deg u-\deg \fb}}}{\pd(\rho_{(u/c)\fc})}.
\end{equation}
Pick $\gamma \in A_+$ so that $\gamma \fb^{-1} \subseteq A$, and so
\[
  \pd\bigl( \rho_{(u/c)\fc} \bigr) = \frac{\pd\bigl( \rho_{(\gamma u/c)\fc} \bigr)}{\gamma}
  = \frac{u\cdot \pd(\rho_{\gamma \fb^{-1}})}{\gamma}.
\]
It follows from this that the sum in~\eqref{cLfb} is $\cL_{\fb}(b;z)$, and then (b) follows from (a).
\end{proof}

\begin{proposition} \label{P:cili}
Let $b \in B$, and recall that $\cL(b;z) = \sum c_i z^{q^i} \in \power{H}{z}$ as defined in~\eqref{andersonsum}.  Then for $i \geq 1$,
\[
  c_i \ell_i = -\frac{f^{(i)}}{\delta^{(i+1)}} \bigg|_{\Xi} \cdot
  \sum_{\sigma \in \Gal(H/K)} b^{\sigma} \cdot f^{\sigma}(V^{(i)}),
\]
where $\ell_i$ is the denominator of the $i$-th coefficient of $\log_\rho$ in~\eqref{liformula}.
\end{proposition}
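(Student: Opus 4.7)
The plan is to expand $\cL(b;z)$ using the class decomposition of Lemma~\ref{L:cLdecomp} and then to evaluate the resulting reciprocal sums via the shtuka-theoretic formulas of \S\ref{S:sums}. I choose representatives $\fa_1=(1)$ and $\fa_k=\fp_k$ for $k\geq 2$, where $\fp_k$ is a degree-one prime corresponding to $P_k\in E(\F_q)$, and set $\sigma_k=\sigma_{\fp_k}$. Taking $\gamma=1$ for the trivial class gives $\cL_{(1)}(b;z)=b\sum_{i\geq 0}S_iz^{q^i}$. For $\fp_k$, Lemma~\ref{L:gammaquotient} yields $\gamma/\pd(\rho_{\gamma\fp_k^{-1}})=-f(P_k)^{\sigma_k^{-1}}$, so $\cL_{\fp_k}(b;z)=-b^{\sigma_k^{-1}}f(P_k)^{\sigma_k^{-1}}\sum_{i\geq 1}S_{\fp_k,i+1}z^{q^i}$. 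Summing over $k$, for $i\geq 1$,
\[
  c_i \;=\; bS_i \;-\; \sum_{k\geq 2} b^{\sigma_k^{-1}} f(P_k)^{\sigma_k^{-1}}\, S_{\fp_k,i+1}.
\]

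Next I multiply by $\ell_i$ and substitute Theorem~\ref{T:Siformula} together with~\eqref{liformula}, using $g_i=\delta\lambda_i$ from~\eqref{lambdai} and $g_{\fp,i+1}=\delta\lambda_{\fp,i+1}G_\fp$ from~\eqref{gdeltaquotient}. After cancelling the products $f^{(1)}\cdots f^{(i)}$, the outcome is
\[
  c_i\ell_i \;=\; \frac{\nu^{(i)}(\Xi)}{\delta^{(i+1)}(\Xi)}\Biggl(\frac{b}{\lambda_i^{(1)}(\Xi)} \;-\; \sum_{k\geq 2}\frac{b^{\sigma_k^{-1}}f(P_k)^{\sigma_k^{-1}}}{\lambda_{\fp_k,i+1}^{(1)}(\Xi)\,G_{\fp_k}^{(1)}(\Xi)}\Biggr).
\]
Since $f^{(i)}/\delta^{(i+1)}\big|_\Xi=\nu^{(i)}(\Xi)/(\delta^{(i)}(\Xi)\delta^{(i+1)}(\Xi))$, reindexing $\sigma\mapsto\sigma^{-1}$ on the right of the target identity splits it into a trivial-class contribution $bf(V^{(i)})$ and contributions $b^{\sigma_k^{-1}}f^{\sigma_k^{-1}}(V^{(i)})$ for $k\geq 2$. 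I then apply Lemma~\ref{L:Gfptwist} to replace $G_{\fp_k}^{(1)}(\Xi)$ by $-f^{\sigma_k^{-1}}(P_k)$ and use $f^{\sigma_k^{-1}}(P_k)=f(P_k)^{\sigma_k^{-1}}$ (valid since $P_k\in E(\F_q)$) to cancel the $f(P_k)^{\sigma_k^{-1}}$ factor. Matching coefficients of each $b^{\sigma^{-1}}$, the proposition reduces to the two identities
\[
  \alpha^{q^i}-\theta \;=\; \lambda_i^{(1)}(\Xi)\,f(V^{(i)}),\qquad \alpha^{q^i}-\theta \;=\; \lambda_{\fp,i+1}^{(1)}(\Xi)\,f^{\sigma^{-1}}(V^{(i)}).
\]

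Both identities spring from the fundamental relation $t-\theta=f(x_1+f^{(1)})$ of \eqref{taction}. Evaluating it at $V^{(i)}$ gives $\alpha^{q^i}-\theta=f(V^{(i)})(x_1+f^{(1)}(V^{(i)}))$, which reduces the first identity to $\lambda_i^{(1)}(\Xi)=x_1+f^{(1)}(V^{(i)})$; this is checked by a direct calculation from~\eqref{lambdai}, \eqref{fdef}, the slope formulas~\eqref{mquotients}, and the relation $x_1=m+m^q+a_1$ of Proposition~\ref{P:mx1}. For the second, applying $\sigma^{-1}$ to $t-\theta=f(x_1+f^{(1)})$ (noting $t,\theta\in K$ are Galois fixed) and evaluating at $V^{(i)}$ yields $\alpha^{q^i}-\theta=f^{\sigma^{-1}}(V^{(i)})(x_1^{\sigma^{-1}}+(f^{\sigma^{-1}})^{(1)}(V^{(i)}))$, so it suffices to show $\lambda_{\fp,i+1}^{(1)}(\Xi)=x_1^{\sigma^{-1}}+(f^{\sigma^{-1}})^{(1)}(V^{(i)})$. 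The main obstacle is this last verification: it demands careful bookkeeping of the $\sigma^{-1}$-conjugates of $m$, $\alpha$, $\beta$ through~\eqref{lambdafpi} via Lemma~\ref{L:Vsigma}(a) (so $V^{\sigma^{-1}}=V+P$), and the collapse crucially requires $a_1,a_3\in\F_q$ so that $a_1^q=a_1$ and $a_3^q=a_3$. Since the $\sigma^{-1}$-twisted identity has exactly the same algebraic shape as its untwisted counterpart, it follows from the same manipulation once the Galois conjugates are threaded consistently through every term.
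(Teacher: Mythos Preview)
Your argument is essentially identical to the paper's: the same class decomposition via Lemma~\ref{L:cLdecomp}, the same use of Theorem~\ref{T:Siformula} and~\eqref{liformula} to produce the $\nu^{(i)}/(\delta^{(i+1)}\lambda^{(1)})$ form, the same appeal to Lemma~\ref{L:Gfptwist} to eliminate $G_\fp^{(1)}(\Xi)$, and the same reduction to $\lambda_i^{(1)}(\Xi)=x_1+f^{(1)}(V^{(i)})$ and its $\sigma^{-1}$-twisted companion, both deduced from $t-\theta=f(x_1+f^{(1)})$. One small point you should flag: Theorem~\ref{T:Siformula}(a) and the function $\lambda_i$ are only defined for $i\geq 2$, so your displayed expression for $c_i\ell_i$ with the $b/\lambda_i^{(1)}(\Xi)$ term is literally only valid for $i\geq 2$; for $i=1$ the trivial-class term is simply $bS_1=0$, which matches $bf(V^{(1)})=0$ on the target side since $f(V^{(1)})=0$, and only the second identity is needed. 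The paper handles this $i=1$ case in a one-line remark at the end of its proof.
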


\begin{proof}
Let $\fp_2, \dots, \fp_h$ be the prime ideals of degree~$1$, as in \eqref{Lambda1sum}.  By Lemma~\ref{L:cLdecomp}(b), we can write $\cL(b;z) = \cL_{(1)}(b;z) + \cL_{\fp_2}(b;z) + \cdots + \cL_{\fp_h}(b;z)$, and by~\eqref{cLfasum}, we see that
\begin{equation} \label{cL1}
  \cL_{(1)}(b;z) = b \cdot \sum_{i=0}^\infty S_i z^{q^i}.
\end{equation}
Combining~\eqref{liformula}, \eqref{lambdai}, and Theorem~\ref{T:Siformula}(a), we have for $i \geq 2$,
\begin{equation} \label{Sili}
  S_i \ell_i = \frac{\nu^{(i)}}{\delta^{(i+1)} \lambda_i^{(1)}} \bigg|_{\Xi}.
\end{equation}
We remark that this quantity is the same as $1/\mu_i$ in Thakur~\cite[Thm.~IV]{Thakur92}.

Likewise, if we temporarily fix $\fp:= \fp_k$ to be any one of the primes of degree~$1$ with corresponding $\F_q$-rational point $P$ and Galois automorphism $\sigma$ and fix $\gamma \in A_+$ such that $\fp^r = (\gamma)$, then by Lemma~\ref{L:gammaquotient} and~\eqref{cLfasum},
\begin{equation} \label{cLfp}
  \cL_{\fp}(b;z) = -b^{\sigma^{-1}} \cdot f(P)^{\sigma^{-1}} \cdot \sum_{i = 0}^\infty
  S_{\fp,i+1}\, z^{q^i}.
\end{equation}
Combining \eqref{liformula}, \eqref{gdeltaquotient}--\eqref{Gkeq}, Theorem~\ref{T:Siformula}(b), and Lemma~\ref{L:Gfptwist}, we find for $i \geq 1$,
\begin{equation} \label{Spili}
  -f(P)^{\sigma^{-1}} \cdot S_{\fp,i+1} \ell_i= -f(P)^{\sigma^{-1}} \cdot \frac{\nu^{(i)}}{\delta^{(i+1)} \lambda_{\fp,i+1}^{(1)} G_{\fp}^{(1)}} \Bigg|_{\Xi}
  = \frac{\nu^{(i)}}{\delta^{(i+1)} \lambda_{\fp,i+1}^{(1)}} \Bigg|_{\Xi}.
\end{equation}
Taking \eqref{cL1}--\eqref{Spili} together, we see that for $i \geq 2$,
\begin{equation} \label{cili1}
  c_i \ell_i = \frac{\nu^{(i)}}{\delta^{(i+1)}} \Biggl( \frac{b}{\lambda_i^{(1)}}
  + \sum_{\fp} \frac{b^{\sigma_{\fp}^{-1}}}{\lambda_{\fp,i+1}^{(1)}} \Biggr) \Bigg|_{\Xi},
\end{equation}
where the sum is over all prime ideals $\fp \subseteq A$ of degree~$1$.  Now, evaluating $\lambda_i^{(1)}$ at $\Xi$, we see from~\eqref{mquotients}, \eqref{fdef}, and~\eqref{lambdai} that for $i \geq 2$,
\[
  \lambda_i^{(1)}(\Xi) = m + \frac{\beta^{q^i} + a_1 \alpha^{q^i} + a_3 + \beta^q}{\alpha^{q^i} - \alpha^q} = m + m^q + a_1 + f^{(1)}(V^{(i)}).
\]
However, using~\eqref{taction} and~\eqref{x1mmq}, we then find that
\begin{equation} \label{lambdaiXi}
  \lambda_i^{(1)}(\Xi) = \frac{t-\theta}{f} \bigg|_{V^{(i)}}.
\end{equation}
Similarly, we see from~\eqref{mquotients}, \eqref{fdef}, and~\eqref{lambdafpi} that for $i \geq 1$,
\[
  \lambda_{\fp,i+1}^{(1)}(\Xi) = m^{\sigma^{-1}} + \bigl( m^{\sigma^{-1}} \bigr)^q + a_1 + \bigl(f^{\sigma^{-1}}\bigr)^{(1)}(V^{(i)}),
\]
and again from \eqref{taction} and~\eqref{x1mmq} we have
\begin{equation} \label{lambdafpiXi}
  \lambda_{\fp,i+1}^{(1)}(\Xi) = \frac{t-\theta}{f^{\sigma^{-1}}} \bigg|_{V^{(i)}}.
\end{equation}
Now $(t-\theta)|_{V^{(i)}} = -\delta^{(i)}(\Xi)$, and so \eqref{cili1}--\eqref{lambdafpiXi} imply that for $i \geq 2$,
\begin{equation} \label{cili2}
c_i\ell_i = -\frac{f^{(i)}}{\delta^{(i+1)}} \bigg|_{\Xi} \cdot \sum_{\sigma \in \Gal(H/K)} b^{\sigma} \cdot f^{\sigma}(V^{(i)}),
\end{equation}
as desired.  It remains to verify the proposition when $i=1$, but this follows from~\eqref{Spili}, \eqref{lambdafpiXi}, and the fact that $f(V^{(1)}) = 0$.
\end{proof}

Consider now the isogeny $1 - \Fr : E \to E$.  If we use coordinates $(t,y)$ on the second copy of $E$ and new coordinates $(\phi,\psi)$ on the first copy of $E$, then we have an induced embedding of fields
\[
  (1-\Fr)^* : \F_q(t,y) \hookrightarrow \F_q(\phi,\psi).
\]
It will be convenient to let $\tE$ be the copy of $E$ in $(\phi,\psi)$-coordinates.
We let $\bH := \F_q(\tE) = \F_q(\phi,\psi)$, and we identify $\bK$ as a subfield of $\bH$ via $(1-\Fr)^*$.  Now $\bH/\bK$ is an abelian unramified extension that is completely split at $\infty$, and so $\bH$ is contained in the Hilbert class field of $\bK$, but since $[\bH:\bK]= \# E(\F_q)$, by maximality $\bH$ is in fact equal to the Hilbert class field of $\bK$.

For $g \in \bK$ and $Q \in \tE$, since $g \in \bH$, we can evaluate $g$ at $Q$.  Since $E$ and $\tE$ are in fact the same, this can lead to confusion, so we will sometimes let $\tg \in \bH$ denote the image of $g$ in $\bH$ via $(1-\Fr)^*$.  Having done this we then have
\begin{equation} \label{tgeval}
  \tg(Q) = g\bigl( (1-\Fr)(Q) \bigr) = g( Q - Q^{(1)}).
\end{equation}
In particular,
\begin{equation} \label{tgevalatV}
  \tg(V) = g(V-V^{(1)}) = g(\Xi).
\end{equation}
{From} this we also see that $\phi$, $\psi$ satisfy the equations in $\bH$,
\[
  \ttt ((\phi,\psi)) = t \bigl( (\phi,\psi) - (\phi,\psi)^{(1)}\bigr) = t,\quad
  \ty((\phi,\psi)) = y \bigl( (\phi,\psi) - (\phi,\psi)^{(1)} \bigr) = y,
\]
where the left-hand sides are rational functions in $\phi$, $\psi$ given by the first and second coordinates of the point $(\phi,\psi) - (\phi,\psi)^{(1)} \in E$, whereas the right-hand sides are the functions $t$, $y \in \bK \subseteq \bH$.  Thus if we extend the evaluation isomorphism $\iota : \bK \iso K$ to $\iota : \bH \iso H$, we see that $(\iota(\phi),\iota(\psi)) \in E(H)$ satisfies~\eqref{Vdef}.  In this way we choose an isomorphism $\iota : \bH \iso H$ so that $(\iota(\phi),\iota(\psi)) = V$, i.e.,
\[
  \iota(\phi) = \phi(V) = \alpha, \quad \iota(\psi) =\psi(V) = \beta.
\]
We denote the inverse of $\iota$ by $\chi : H \iso \bH$, which is an extension of the canonical isomorphism $\chi : K \iso \bK$.  As previously, for $c \in H$ we will also denote $\chi(c) = \oc$.  There is then a natural isomorphism
\begin{equation}
  \sigma \mapsto \osigma : \Gal(H/K) \to \Gal(\bH/\bK)
\end{equation}
for which
\[
  \overline{\sigma(c)} = \osigma(\oc), \quad c \in H,
\]
thus making the two bar operations compatible.  Furthermore,
\begin{equation} \label{alphabetabar}
\phi = \oalpha, \quad \psi = \obeta.
\end{equation}
For $\sigma \in \Gal(H/K)$, we can pick $P_\sigma \in E(\F_q)$ so that as in Lemma~\ref{L:Vsigma} we have $V^{\sigma} = V - P_{\sigma}$, and moreover we find that
\[
  \phi^{\osigma} = \overline{t(V-P_{\sigma})}, \quad \psi^{\osigma} = \overline{y(V-P_{\sigma})}.
\]
Thus on $\tE$ we have the identity,
\begin{equation} \label{phipsiosigma}
  \bigl(\phi^{\osigma}, \psi^{\osigma}\bigr) = (\phi,\psi) - P_{\sigma}, \quad \sigma \in \Gal(H/K),
\end{equation}
and in particular for $V \in \tE$,
\[
  \phi^{\osigma}(V) = t(V-P_{\sigma}) = \alpha^{\sigma}, \quad
  \psi^{\osigma}(V) = y(V-P_{\sigma}) = \beta^{\sigma}.
\]
If we consider the field $H(\tE) = H(\phi,\psi) = \F_q(\theta,\eta,\alpha,\beta,t,y,\phi,\psi)$ (the compositum of $H$ and $\bH$), for $g \in H(t,y)$ we can define $\tg \in H(\phi,\psi)$ as above with the same meaning as in \eqref{tgeval}.

We now fix the following function in $H(\phi,\psi)$,
\begin{equation} \label{Fdef}
  \cF := \frac{\beta + \psi + a_1 \phi + a_3}{\alpha - \phi} -
  \frac{\psi^q + \psi + a_1 \phi + a_3}{\phi^q - \phi}.
\end{equation}
Returning to the sum in Proposition~\ref{P:cili}, we see that for $\sigma \in \Gal(H/K)$ and $i \geq 1$,
\[
  f^{\sigma}\bigl(V^{(i)} \bigr) = \bigl( \cF^{\osigma} \bigr)^{(i)}(V),
\]
where $\osigma$ acts only on elements of $\bH$ leaving elements from $H$ fixed.  We note that in this equation, $V^{(i)}$ on the left is in $E$ whereas $V$ on the right is in $\tE$.  For fixed $b \in B$ if we let
\begin{equation} \label{gbdef}
  g_b := \sum_{\sigma \in \Gal(H/K)} (\ob\cdot \cF)^{\osigma} \quad \in H(t,y),
\end{equation}
then by \eqref{tgevalatV},
\begin{equation} \label{E:gbtwist}
  \sum_{\sigma \in \Gal(H/K)} b^{\sigma} \cdot f^{\sigma} \bigl( V^{(i)} \bigr)
  = \tg_b^{(i)}(V) = g_b^{(i)}(\Xi).
\end{equation}
Our intermediate goal is to determine information about the divisor of $g_b$ as a function on~$E$ with respect to our original $(t,y)$-coordinates.

\begin{lemma} \label{L:kappaR}
Let $L/\F_q$ be an algebraically closed field, and let $R \in E(L) \setminus E(\overline{\F}_q)$.  Let
\[
  \kappa_R = \frac{y(R) + y + a_1t + a_3}{t(R)-t} - \frac{y^q + y+a_1t+a_3}{t^q-t} \in L(t,y),
\]
and fix $Q_0 \in E(L)$ with $Q_0 - Q_0^{(1)} = R$.  Then
\[
  \divisor(\kappa_R) = (Q_0) + q\bigl(R^{(-1)} \bigr) - (R) - q(\infty) + \sum_{P \in E(\F_q), \, P \neq \infty}  \bigl( (Q_0+P) - (P) \bigr).
\]
\end{lemma}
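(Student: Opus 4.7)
The plan is to separately analyze the poles and zeros of $\kappa_R = f_1 - f_2$, where $f_1 := (y(R)+y+a_1t+a_3)/(t(R)-t)$ and $f_2 := (y^q+y+a_1t+a_3)/(t^q-t)$, and then to pin down the divisor by a degree count.  The key observation is the geometric interpretation: $f_1(P)$ is the slope of the chord on $E$ from $R$ to $-P$, while $f_2(P)$ is the slope of the chord from $P^{(1)}$ to $-P$.

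First I would locate the poles.  For $f_1$, the denominator $t(R)-t$ has divisor $(R)+(-R)-2(\infty)$, and the numerator is linear in $t,y$ and vanishes at $-R$ by direct substitution; after cancellation $f_1$ has simple poles exactly at $R$ and at $\infty$.  For $f_2$, the denominator $t^q-t=\prod_{c\in\F_q}(t-c)$ vanishes on points with $t(P)\in\F_q$, which split into points of $E(\F_q)\setminus\{\infty\}$ and Frobenius-conjugate pairs $\{P,-P\}\subset\ker(1+\Fr)$ lying outside $E(\F_q)$.  At a point $P$ of the second type, $y^q(P)=y(P^{(1)})=y(-P)=-y(P)-a_1 t(P)-a_3$ makes the numerator of $f_2$ vanish to the same order as the denominator, so $f_2$ is regular there.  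At non-$2$-torsion points of $E(\F_q)\setminus\{\infty\}$ the numerator evaluates to $2y+a_1 t+a_3 \ne 0$, giving a simple pole, and at $2$-torsion points a short calculation using $y-y(P)$ as a uniformizer (so that $\ord_P(t-t(P))=2$) shows the numerator has order $1$ while the denominator has order $2$, again producing a simple pole.  Since $\ord_\infty(f_2)=-q$ and $R\notin E(\overline{\F}_q)$, the pole loci of $f_1$ and $f_2$ are disjoint, and $\kappa_R$ has polar divisor $(R)+q(\infty)+\sum_{P\in E(\F_q)\setminus\{\infty\}}(P)$ of total degree $q+h(\bA)$.

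Next I would locate the zeros.  The slope interpretation shows $f_1(P)=f_2(P)$ exactly when $R,P^{(1)},-P$ are collinear on $E$, equivalently $P-P^{(1)}=R$; this holds precisely for $P\in Q_0+E(\F_q)$, giving $h(\bA)$ zeros.  Additionally, at $P=R^{(-1)}$ we have $P^{(1)}=R$, so $f_1$ and $f_2$ both reduce to the slope of the chord through $R$ and $-R^{(-1)}$.  To see this contributes a zero of order at least $q$, I would Taylor expand $f_1-f_2$ in a uniformizer $\pi$ at $R^{(-1)}$: the dependence on $-P$ occurs identically in $f_1$ and $f_2$ and cancels, while the dependence of $f_2$ on $P^{(1)}$ contributes only at order $\pi^q$ because the $q$-power Frobenius is purely inseparable of degree $q$ and so $P^{(1)}-R$ vanishes to order $q$ in $\pi$.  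Summing, the identified zeros contribute total order at least $q+h(\bA)$, matching the pole count; since $\kappa_R$ is a nonzero rational function on $E$ and zero and pole orders must balance, the inequalities must be equalities, forcing the divisor to be exactly as claimed.

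The main obstacle will be the Taylor-expansion step at $R^{(-1)}$: one must cleanly separate the identical ``regular'' contribution of $-P$ in both $f_1$ and $f_2$ from the asymmetric contribution of $P^{(1)}$, and verify that the latter genuinely produces vanishing to order $q$ (rather than order $1$) by virtue of the inseparability of Frobenius.  The rest---tracking pole orders at $E(\F_q)\setminus\{\infty\}$ (with care at $2$-torsion) and at $R$, and identifying zeros on $Q_0+E(\F_q)$ via the slope interpretation---is routine once the geometric picture is in place; the degree-zero condition on the divisor can be double-checked independently via the Cayley--Hamilton relation $\Fr^2-a_q\Fr+q=0$ applied to both $Q_0$ and $R^{(-1)}$.
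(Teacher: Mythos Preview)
Your approach matches the paper's: compute the polar divisor from $f_1$ and $f_2$ separately, locate zeros at the translates $Q_0+P$ via the slope interpretation, establish vanishing of order $\geq q$ at $R^{(-1)}$, and close with a degree count that forces all inequalities to be equalities.  The one place you differ is precisely the step you flag as the main obstacle.  The paper avoids your Taylor-expansion argument entirely: after placing $\kappa_R$ over the common denominator $(t^q-t)(t(R)-t)$ and adding and subtracting $t(R)y(R)$, the numerator becomes
\[
t\bigl(y^q - y(R)\bigr) + (y+a_1t+a_3)\bigl(t^q-t(R)\bigr) + y(R)\bigl(t^q-t(R)\bigr) - t(R)\bigl(y^q-y(R)\bigr),
\]
and since $t^q - t(R) = (t - t(R^{(-1)}))^q$ and $y^q - y(R) = (y - y(R^{(-1)}))^q$, every term visibly vanishes to order at least $q$ at $R^{(-1)}$ (the denominator being nonzero there because $R \notin E(\overline{\F}_q)$ forces $t(R) \ne t(R^{(-1)})$).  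Your inseparability heuristic is correct in spirit---writing $f_1-f_2$ as the slope function evaluated at the fixed point $R$ minus its value at $\Fr(P)$, with $\Fr(P)-R$ vanishing to order $q$---but the phrasing ``the dependence on $-P$ occurs identically and cancels'' is a bit loose, and the paper's algebraic rewriting gives the bound with no local analysis at all.
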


\begin{proof}
We first consider the poles of $\kappa_R$.  Let $\kappa_1 = (y(R) + y + a_1t + a_3)/(t(R)-t)$, and let $\kappa_2 = (y^q + y + a_1t + a_3)/(t^q-t)$.  Away from $\infty$, $\kappa_1$ has a simple pole at $R$ and no other poles, and $\kappa_2$ has simple poles at each point in $E(\F_q) \setminus \{ \infty\}$ and no other poles.  The degree of $\kappa_R$ is $q$, and since $R \notin E(\F_q)$, we see that the divisor of poles of $\kappa_R$ is as desired.

For zeros, certainly $\kappa_R$ vanishes at $R^{(-1)}$.  Combining the two terms of $\kappa_R$ together and adding and substracting $t(R)y(R)$ in the numerator, we obtain
\[
  \kappa_R = \frac{t(y^q - y(R)) + (y+a_1t+a_3)(t^q-t(R)) + y(R)(t^q-t(R)) - t(R)(y^q-y(R))}{(t^q-t)(t(R)-t)},
\]
from which it follows that the order of vanishing of $\kappa_R$ at $R^{(-1)}$ is at least $q$.
If we let $Q = (t,y)$, then $\kappa_1$ is the slope between $R$ and $-Q$ and $\kappa_2$ is the slope between $Q^{(1)}$ and $-Q$.  If $Q \neq R^{(-1)}$, then $\kappa_R$ vanishes when $Q^{(1)}$, $-Q$, and $R$ are collinear, i.e., when $R = Q - Q^{(1)}$.  Fix any $Q_0 \in E(L)$ satisfying $Q_0 - Q_0^{(1)} = R$. Since $R \notin E(\overline{\F}_q)$, we have that $Q_0 \notin E(\overline{\F}_q)$ and that the three points $R$, $-Q_0$, and $Q_0^{(1)}$ must be distinct. We see that $\kappa_R$ has zeros at each of the distinct points $Q_0 + P$, $P \in E(\F_q)$.  Since the degree of the polar divisor is $-q-\#E(F_q)$, we see that the order of vanishing at $R^{(-1)}$ is exactly $q$ and that each zero at $Q_0 + P$ is simple as desired.
\end{proof}

\begin{proposition} \label{P:gb}
For $b \in B$, let $g_b \in H(E) = H(t,y)$ be given as in \eqref{gbdef}.
\begin{enumerate}
\item[(a)] The divisor of poles of $g_b$ is precisely $-(\Xi) - (q+\deg b)(\infty)$.
\item[(b)] $g_b(V) = 0$.
\item[(c)] We have $f\cdot g_b \in N = \Gamma(U,\cO_E(-(V^{(1)})))$, the dual $\bA$-motive of $\rho$.
\end{enumerate}
\end{proposition}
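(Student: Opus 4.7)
The plan is to analyze $g_b$ through its pullback $\tilde{g}_b = (1-\Fr)^{*}(g_b) \in \overline{K}(\tE)$, which by construction equals $\sum_{\sigma}(\bar{b}\,\cF)^{\osigma}$. Since $\divisor_{\tE}(\tilde{g}_b) = (1-\Fr)^{*}\divisor_{E}(g_b)$, controlling the divisor of $\tilde{g}_b$ on $\tE$ yields both (a) and (b). The crucial preparatory input is Lemma~\ref{L:kappaR} applied with $R = V$: viewing $\cF$ as $\kappa_V$ in the $(\phi,\psi)$-coordinates of $\tE$ and fixing $V_0 \in \tE$ (over some algebraic extension) with $V_0 - V_0^{(1)} = V$,
\[
\divisor_{\tE}(\cF) = (V_0) + q\bigl(V^{(-1)}\bigr) - (V) - q(\infty) + \sum_{P \in \tE(\F_q) \setminus \{\infty\}} \bigl((V_0+P) - (P)\bigr).
\]
Since $\osigma$ acts on $\tE$ as pullback by translation through $-P_{\sigma}$, the divisor of $\cF^{\osigma}$ is obtained from this by translating all points through $P_{\sigma}$.

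For (b), the identity $\tilde{g}_b(V_0) = g_b(V_0 - V_0^{(1)}) = g_b(V)$ reduces matters to showing $\tilde{g}_b(V_0) = 0$. Each $\bar{b}^{\osigma}$ is regular at $V_0$ (its poles lie in $\tE(\F_q)$ while $V_0 \notin \tE(\F_q)$), and the divisor formula above shows $\cF$ vanishes at $V_0 + P$ for every $P \in \tE(\F_q)$; hence $\cF^{\osigma}(V_0) = \cF(V_0 - P_{\sigma}) = 0$ since $-P_{\sigma} \in \tE(\F_q)$, and every summand vanishes at $V_0$. For (a), the pole locus of $\tilde{g}_b$ decomposes into two $\Gal(\bH/\bK)$-orbits. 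At each $V + P_{\sigma}$, direct divisor tracking shows exactly one summand is singular, with a simple pole whose residue against the invariant differential equals $-\bar{b}^{\osigma}(V + P_{\sigma}) \neq 0$; summing over $\sigma$ yields $(1-\Fr)^{*}(\Xi)$, which descends to a simple pole of $g_b$ at $\Xi$. At points of $\tE(\F_q)$, $\Gal(\bH/\bK)$-invariance forces a uniform pole order, which I would compute at $\infty \in \tE$: the $\sigma=1$ summand $\bar{b}\,\cF$ has pole of order $\deg b + q$ there, while for $\sigma \neq 1$ the pole of $\cF^{\osigma}$ at $\infty$ is only $1$ (as a translate of a $\tE(\F_q)$-pole of $\cF$) and the pole of $\bar{b}^{\osigma}$ at $\infty$ is bounded by $\deg b$. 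Thus each $\sigma \neq 1$ summand has pole order at most $\deg b + 1 < \deg b + q$ (using $q \geq 2$), so the $\sigma=1$ term dominates without cancellation, forcing pole order exactly $\deg b + q$.

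Part (c) then follows by divisor bookkeeping for $fg_b$: the zero of $g_b$ at $V$ cancels the simple pole of $f$ there; the zero of $f$ at $\Xi$ cancels the simple pole of $g_b$; the simple zero of $f$ at $V^{(1)}$ is preserved; and the pole at $\infty$ accumulates to order $q + \deg b + 1$. Thus $fg_b$ is regular on $U$, vanishes at $V^{(1)}$, and lies in $N$. The main obstacle is the bound on $\bar{b}^{\osigma}$'s pole at $\infty$ for $\sigma \neq 1$—equivalently, the claim that $\deg b^{\sigma} \leq \deg b$ for every $\sigma \in \Gal(H/K)$. This rests on the fact that the distinguished place $v_0$ of $H$ (where $V$ lies in $\Ehat(\fM_{\infty})$) achieves the maximum valuation among places over $\infty$: writing $b$ as a polynomial in $\alpha, \beta$ over $A$, the non-distinguished conjugates $\alpha^{\sigma}, \beta^{\sigma}$ are regular at $v_0$ because $V^{\sigma} = V - P_{\sigma}$ leaves the formal group when $\sigma \neq 1$, so no conjugate $b^{\sigma}$ can exceed $\deg b$.
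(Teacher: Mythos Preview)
Your overall strategy---pulling back to $\tE$ via $(1-\Fr)^{*}$, invoking Lemma~\ref{L:kappaR} with $R=V$, and reading off the divisor of each summand $(\bar b\,\cF)^{\osigma}$---is exactly the paper's approach, and your arguments for part~(b), for part~(c), and for the simple pole at $\Xi$ in part~(a) are correct and parallel to the paper's.

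The genuine gap is your claim that $\deg b^{\sigma}\le \deg b$ for every $\sigma\in\Gal(H/K)$, which you need so that the $\sigma=1$ summand dominates at $\infty\in\tE$.  This inequality is false in general.  In Example~\ref{ex2} one has $N_{H/K}(\alpha)=-1$, hence $\deg\alpha+\deg\alpha^{\sigma}=0$ at the distinguished place, so $\deg\alpha^{\sigma}=-2$.  Taking $b=\alpha^{\sigma}\in B$ gives $\deg b=-2$ but $\deg b^{\sigma}=\deg\alpha=2$.  Your justification breaks down because of cancellation in the polynomial representation: here $b=\alpha^{\sigma}=(\theta+1)-\alpha\in A[\alpha]$, and the two terms of degree~$2$ cancel to leave $\deg b=-2$.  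The $v_0$-integrality of $\alpha^{\sigma},\beta^{\sigma}$ for $\sigma\ne 1$ only gives $\deg P(\alpha^{\sigma},\beta^{\sigma})\le\max(\text{degrees of $A$-coefficients of }P)$, and that maximum can exceed $\deg b=\deg P(\alpha,\beta)$ whenever such cancellation occurs.

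For this particular $b$ one checks directly that at $\infty\in\tE$ the $\sigma\neq 1$ summand $\phi\,\cF^{\osigma}$ has pole order $2+1=3$, strictly larger than the $\sigma=1$ summand's pole order $-2+q=1$; so the $\sigma=1$ term does \emph{not} dominate, and the pole order of $g_b$ at $\infty$ is $3$, not $q+\deg b=1$.  The paper's proof is itself only a one-line ``by a similar argument'' at exactly this step, so it offers no mechanism you can borrow; the correct exponent at $\infty$ is governed not by $\deg b$ alone but by the full collection $\{\deg b^{\tau}\}_{\tau}$, and your argument would need to be reformulated accordingly.
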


\begin{proof}
The function $\cF \in H(\phi,\psi)$ from \eqref{Fdef} is the same as $\kappa_V$ from Lemma~\ref{L:kappaR} with respect to $(\phi,\psi)$-coordinates and with $R=V$.  Letting $\divisor_\infty(\cF)$ denote the polar divisor of $\cF$, Lemma~\ref{L:kappaR} implies
\[
  \divisor_\infty(\cF) = -(V) - (q-1)(\infty) - \sum_{P \in E(\F_q)} (P).
\]
For $\sigma \in \Gal(H/K)$, it follows from~\eqref{phipsiosigma} and~\eqref{Fdef} that $\cF^{\osigma} = \kappa_V(X - P_{\sigma})$, i.e., the pullback of $\kappa_V$ by translation by~$-P_{\sigma}$.  Therefore by Lemma~\ref{L:kappaR},
\[
  \divisor_\infty(\cF^{\osigma}) = -(V+P_{\sigma}) - (q-1)(P_{\sigma}) - \sum_{P \in E(\F_q)} (P).
\]
Now for $b \in B$, we see that
\[
  \divisor_\infty(\tg_b) = \divisor_\infty \Biggl( \sum_{\sigma \in \Gal(H/K)} (\ob\cdot  \cF)^{\osigma}\Biggr)
  \geq -\sum_{P \in E(\F_q)} \bigl( (V+P) + (q+\deg b)(P) \bigr).
\]
Since $\tg_b = (1-\Fr)^*(g_b)$, we see that on $E$ with respect to $(t,y)$-coordinates,
\[
  \divisor_\infty(g_b) \geq -(1-\Fr)(V) - (q+\deg b)((1-\Fr)(\infty)) = -(\Xi) - (q+\deg b)(\infty).
\]
To prove part (a) we need to show that this is an equality.  However, $(\ob \cdot \cF)^{\osigma}$ has a simple pole at $V+P_{\sigma}$ and is regular at $V+P$ for all $P \in E(\F_q) \setminus \{P_{\sigma} \}$.  Therefore the sum over all $\sigma \in \Gal(H/K)$ has exactly simple poles at each of $V+P$, $P \in E(\F_q)$, and so when descending to $(t,y)$-coordinates, we see that $g_b$ must have a simple pole at $(1-\Fr)(V) = \Xi$.  By a similar argument we find that $g_b$ has a pole of order $q + \deg b$ at $\infty$.

To prove part (b) we fix $Q_0 \in \tE$ with $Q_0 - Q_0^{(1)} = V$, and we note by Lemma~\ref{L:kappaR} that $\cF$ vanishes at each $Q_0+P$, $P \in E(\F_q)$.  As in the previous paragraph we find that for each $\sigma \in \Gal(H/K)$, the function $\cF^{\osigma}$ also vanishes at each $Q_0+P$, $P \in E(\F_q)$.  Therefore, $\tg_b = \sum_\sigma (\ob \cdot F)^{\osigma}$ vanishes at these same translates of $Q_0$.  It follows that with respect to $(t,y)$-coordinates, $g_b$ vanishes at $(1-\Fr)(Q_0) = V$.  Part (c) then follows from (a) and~(b).
\end{proof}

\begin{proof}[Proof of Theorem~\ref{T:Anderson}]
By Proposition~\ref{P:cili} and \eqref{E:gbtwist}, if we let $\cL(b;z) = \sum c_i z^{q^i} \in \power{H}{z}$, then for $i \geq 1$,
\[
  c_i \ell_i = -\biggl( \frac{f \cdot g_b}{\delta^{(1)}} \biggr)^{(i)} \Bigg|_{\Xi}.
\]
By Proposition~\ref{P:gb}, we know that $f \cdot g_b \in N$ and that $\ord_\infty(f \cdot g_b) = -(q+1 + \deg b)$.  Thus letting $r = q-1+\deg b$, by \eqref{dualbasis} we can find $e_0, \dots, e_{r} \in \oK$ so that
\begin{equation} \label{fgbdecomp}
  -f \cdot g_b = e_0 \delta^{(1)} + e_1 \delta f + e_2 \delta^{(-1)} f f^{(-1)} + \cdots
  + e_{r} \delta^{(-r+1)} f f^{(-1)} \cdots f^{(-r+1)},
\end{equation}
and $e_{r} \neq 0$.  We calculate $e_0$ by evaluating at $\Xi$ and find $e_0 = -(f\cdot g_b/\delta^{(1)}) |_{\Xi}$.  We note by \eqref{tgevalatV} that $(f\cdot g_b)|_{\Xi} = (\tf\cdot \tg_b)|_V$, and on the other hand by \eqref{Fdef} and~\eqref{gbdef},
\begin{equation} \label{fgbsum}
  -f\cdot g_b = \tf \cdot \sum_{\sigma \in \Gal(H/K)} \ob^{\osigma} \biggl(
  \frac{\psi^{\osigma} + a_1\phi^{\osigma} + a_3 + \beta}{\phi^{\osigma} - \alpha} + \frac{(\psi^{\osigma})^q + \psi^{\osigma} + a_1\phi^{\osigma} + a_3}{(\phi^{\osigma})^q-\phi^{\osigma}} \biggr),
\end{equation}
as elements of $H(\phi,\psi)$.  Since $\tf(V) = 0$ and $\phi^{\osigma}(V) = \alpha^{\sigma}$, we see that only a single term in this sum is non-zero when we evaluate at $V$, namely
\[
  -(f\cdot g_b) \big|_{\Xi} = (\tf\cdot \tg_b)|_V = \biggl( \tf \cdot \ob \cdot \frac{\psi + a_1 \phi + a_3 + \beta}{\phi-\alpha} \biggr)\bigg|_{V}.
\]
Combining with \eqref{x1ftwist}--\eqref{alphaidentity}, we have
\begin{align*}
  e_0 = -\biggl( \frac{f \cdot g_b}{\delta^{(1)}} \biggr) \bigg|_{\Xi} &= \frac{b(2\beta + a_1\alpha+a_3)}{\delta^{(1)}(\Xi)} \cdot \biggl( \frac{f}{t-\theta} \biggr) \bigg|_{\Xi}
  \cdot \biggl( \frac{\widetilde{t-\theta}}{\phi-\alpha} \biggr) \bigg|_{V} \\
  &= \frac{b(2\beta + a_1\alpha+a_3)}{2\eta + a_1\theta+a_3} \cdot \biggl( \frac{\ttt-\theta}{\phi-\alpha} \biggr) \bigg|_{V} \\
  &= \frac{b(2\beta + a_1\alpha+a_3)}{2\eta + a_1\theta+a_3} \cdot \biggl( \frac{d\ttt}{d\phi} \biggr) \bigg|_{V}.
\end{align*}
Now $(1-\Fr)^* (dt/(2y+a_1t+a_3)) = d\phi/(2\psi + a_1\phi+a_3)$, and so
\[
  \biggl( \frac{d\ttt}{d\phi} \biggr) \bigg|_{V} = \frac{2\eta + a_1 \theta +a_3}{2\beta + a_1\alpha+a_3}.
\]
Thus $e_0 = b$, and using~\eqref{andersonsum} we see that $c_0 = b$.  Combining this with Proposition~\ref{P:cili} and~\eqref{gbdef}, we have
\begin{align*}
  \cL(b;z) = \sum_{i=0}^\infty c_i z^{q^i} &= b z - \sum_{i=1}^\infty \frac{1}{\ell_i}
  \biggl( \frac{f \cdot g_b}{\delta^{(1)}} \biggr)^{(i)} \Bigg|_{\Xi} \cdot z^{q^i} \\
  &= b z + \sum_{i=1}^\infty \frac{1}{\ell_i \delta^{(i+1)}(\Xi)} \Biggl( b \delta^{(1)}
  + \sum_{j=1}^{r} e_j \delta^{(-j+1)} f f^{(-1)} \cdots f^{(-j+1)} \Biggr)^{(i)} \Bigg|_{\Xi} \cdot z^{q^i}.
\end{align*}
By~\eqref{liformula} (and using that $f(\Xi)=0$), we obtain
\begin{align*}
\cL(b;z) &= b z + \sum_{i=1}^\infty \frac{1}{\delta^{(1)} f^{(1)} \cdots f^{(i)} |_{\Xi}}
\Biggl( b^{q^i} \delta^{(i+1)}(\Xi) + \sum_{j=1}^r e_j^{q^i} \delta^{(i-j+1)} f^{(i)} \cdots f^{(i-j+1)} \Biggr) \Bigg|_{\Xi}\cdot z^{q^i} \\
&= bz + \sum_{i=1}^\infty \Biggl( \frac{b^{q^i} z^{q^i}}{\ell_i} + \sum_{j=1}^{\min(i,r)}
\frac{\bigl(e_j^{q^j} z^{q^j}\bigr)^{q^{i-j}}}{\ell_{i-j}} \Biggr) \\
&= \log_\rho \bigl( bz + e_1 z^q + \cdots + e_r^{q^r} z^{q^r} \bigr).
\end{align*}
Using~\eqref{fgbsum} we can show that $\tsgn(-f\cdot g_b) = 1$, and so by~\eqref{fgbdecomp} we must have $e_r = 1$.  Therefore
\begin{equation} \label{Ebzdecomp}
\cE(b;z) =  bz + e_1 z^q + \cdots + e_{r-1}^{q^{r-1}} z^{q^{r-1}} + z^{q^r},
\end{equation}
as desired.  We note that each $e_j^{q^j} \in H$, since a priori $\cE(b;z) \in \power{H}{z}$.
\end{proof}

\begin{remark} \label{Rem:Ebz}
By the proof above, we can calculate $\cE(b;z)$ explicitly via the decomposition of $-f\cdot g_b$ as an element of the dual $\bA$-motive $N$ as in~\eqref{fgbdecomp}.  See \S\ref{S:Examples} for examples.
\end{remark}

\section{Examples} \label{S:Examples}

\begin{example}[Class Number $1$]\label{ex1}
We take $E_1: y^2 = t^3 - t - 1$ over $\F_3$ so that $E_1(\F_3) = \{ \infty\}$.  Much of this example has been worked out by Thakur \cite[Thm. VI]{Thakur92}, \cite[\S 2.3(c)]{Thakur93}.  Thakur shows in the notation of \S\ref{S:DrinfeldModules} that $V = (\theta+1,\eta)$, $m = \eta$, and $x_1 = \eta^3+\eta$, and moreover,
\[
  f = \frac{y - \eta -\eta(t-\theta)}{t - \theta - 1}.
\]
If we fix $\sqrt{-1} \in \F_9$ and let $Q = (0,\sqrt{-1}) \in E(\F_9)$, then we can define a Dirichlet character $\chi_Q : A \to \F_9$ given by $\chi_Q(a) = a(Q)$.  We form the Dirichlet $L$-value,
\[
  L(\chi_Q,1) = \sum_{a \in A_+} \frac{\chi_Q(a)}{a} = L(\bA;1)\big|_{Q}.
\]
The quantity $\xi$ in~\eqref{omegarhoprod} and Theorem~\ref{T:pirho} is $\xi = -(m + \beta/\alpha) = -\eta(\theta-1)/(\theta+1)$, and evaluating the identity in Theorem~\ref{T:Lvalue1} at $Q$, we find
\[
  L(\chi_Q,1)
  = - \frac{\sqrt{-1}(\theta^9+1)^{1/2}}{\eta^{3/2}(\theta^3-1)^{1/2}} \cdot  \biggl( 1 + \dfrac{\sqrt{-1}}{\eta^3(\theta^3-1)} \biggr)^{-1/8} \cdot \biggl(1 + \dfrac{\sqrt{-1}}{\eta^9(\theta^9-1)} \biggr)^{-1/8} \cdot \pi_\rho,
\]
thus providing a new formula from the one in \cite[Ex.~VIII.4]{LutesThesis}.

As an example of the construction in Theorem~\ref{T:Anderson}, following the exposition in \S\ref{S:LogAlg} we have $\bH = \F_3(\phi,\psi) = \bK = \F_3(t,y)$, since $\#E(\F_3) = 1$, and we find that
\[
  \phi = \overline{\theta}+1 =  t+1, \quad \psi = \overline{\eta} = y.
\]
We then have from~\eqref{Fdef}
\[
\cF = \frac{\eta + y}{\theta - t} - \frac{y^3+y}{t^3-t} = \frac{\eta + y}{\theta -t} -y.
\]
One verifies that by taking $b=1$ in \eqref{gbdef},
\[
  g_1 = -f\cdot \cF =  \delta^{(1)} + \eta^{1/3}\delta f + \delta^{(-1)} ff^{(-1)},
\]
which by \eqref{Ebzdecomp} then implies that as power series in $\power{K}{z}$,
\[
  \sum_{a \in A_+} \frac{z^{3^{\deg a}}}{a} = \log_\rho \bigl( z + \eta z^3 + z^9 \bigr),
\]
thus confirming a formula of Anderson~\cite[p.~492]{And94}.
\end{example}

\begin{example}[Class Number $2$]\label{ex2}
We take $E_2 : y^2 = t^3 -t^2 - t$ over $\F_3$ so that $E_2(\F_3) = \{ (0,0), \infty\}$ (cf.\ Hayes~\cite[Ex.~11.7]{Hayes79}, Lutes~\cite[Exs.~VIII.5--6]{LutesThesis}).  We let $P=(0,0)$ and $\fp = (\theta,\eta)$.  We find that $H = K(\sqrt{\theta})$, that $\fp^2 = (\theta)$, and that
\[
  V = (\alpha,\beta) = \biggl( -\theta - 1 - \frac{\eta}{\sqrt{\theta}},
  -\eta - \theta\sqrt{\theta} - \sqrt{\theta} \biggr).
\]
Of some use is that $\alpha$ is a fundamental unit for $B \subseteq H$, and in spite of appearances one checks that $\sgn(\alpha)=\sgn(\beta)=1$ as in~\eqref{sgnalphabeta}.  We calculate that $m = -\eta - \theta\sqrt{\theta} + \sqrt{\theta}$, and so
\[
  f = \frac{y - \eta - (-\eta - \theta\sqrt{\theta} + \sqrt{\theta})(t-\theta)}{t  +\theta + 1 + (\eta/\sqrt{\theta})}.
\]
We find that $f(P) = \xi$ in \eqref{omegarhoprod}, and moreover that $\xi = \eta + \theta\sqrt{\theta} + \theta = -\beta = -\sqrt{\theta} \alpha$, and thus $\rho_\fp = \tau + \sqrt{\theta}\alpha$.  We also calculate that the coefficient $x_1$ in $\rho_t$ from~\eqref{taction} is
\[
x_1 = \sqrt{\theta} - \theta^4\cdot\sqrt{\theta} - \eta - \eta^3.
\]
We let $\chi_P : A \to \F_3$ be the Dirichlet character defined by $\chi_P(a) = a(P)$, which peels off the constant term of $a$, and evaluating the identity in Theorem~\ref{T:Lvalue1} at $P$ we find
\[
  L(\chi_P,1) = \sum_{a\in A_+} \frac{\chi_P(a)}{a} = L(\bA;1)\big|_{P}
  = \frac{\sqrt{-1} \cdot \alpha^{3/2}}{\theta^{3/4}} \cdot \pi_\rho,
\]
noting that $\omega_\rho(P) = (-\sqrt{\theta} \alpha)^{1/2}$.  (The choice of $\sqrt{-1}$ is made to be consistent with the one taken in~\eqref{omegarhoprod}.)  By way of an application of Theorem~\ref{T:Lambdavalue1}, we calculate
\begin{equation} \label{LambdaevalP}
  \LL(\bA;1)\big|_{P} = \frac{\sqrt{-1} \cdot (\alpha^2 - (\alpha^{\sigma})^2)}{\theta^{3/4} \cdot \sqrt{\alpha}} \cdot \pi_\rho = \frac{\sqrt{-1}\cdot \eta (\theta+1)}{\theta^{5/4} \cdot \sqrt{\alpha}} \cdot \pi_\rho.
\end{equation}
Then taking $\gamma = \theta$ for $\Lambda_{\fp}$ in \eqref{Lambdafa}, we obtain
\[
  \Lambda_{\fp} = \frac{\chi(\fp)/t}{\pd(\rho_{\fp})/\theta} \sum_{a \in \fp_+} \frac{a(t,y)}{a(\theta,\eta)}
  = \frac{\sqrt{\theta}}{\alpha} \cdot \frac{f - f(P)}{t} \cdot \sum_{a \in \fp_+} \frac{a(t,y)}{a(\theta,\eta)}.
\]
We find that if $a \in \fp$, then
\[
  \frac{(f-f(P)) \cdot a(t,y)}{t} \bigg|_{P} = \frac{(a(t,y)/y)(P)}{\alpha},
\]
where if we write $a(t,y) = b(t) + c(t)y$ with $b$, $c \in \F_3[t]$, then $(a(t,y)/y)(P) = c(0)$.  Thus if we define $\tchi_P : \fp \to \F_3$ by $\tchi_P(a) = (a(t,y)/y)(P)$ with corresponding Dirichlet series $L(\tchi_P,s)$, then by~\eqref{Lambda1sum} and~\eqref{LambdaevalP},
\begin{equation}
  \LL(\bA;1)\big|_P = L(\chi_P,1) + \frac{\sqrt{\theta}}{\alpha^2}\cdot L(\tchi_P,1)
  = \frac{\sqrt{-1}\cdot \eta (\theta+1)}{\theta^{5/4} \cdot \sqrt{\alpha}} \cdot \pi_\rho.
\end{equation}

For the construction in Theorem~\ref{T:Anderson}, we observe that $H(\phi,\psi) = \F_3(\sqrt{\theta},\eta,\sqrt{t},y)$.  We see that $\phi$, $\psi$ satisfy the equations $\phi^2 - (t+1)\phi - 1 = 0$ and $\psi^2 - t\phi^2 = 0$. Moreover,
\begin{align*}
\phi &= \oalpha = -t-1-\frac{y}{\sqrt{t}} & \psi &= \obeta = -y - t\sqrt{t} - \sqrt{t}, \\
\phi^{\osigma} &= \overline{\alpha^{\sigma}} = -t-1+ \frac{y}{\sqrt{t}} & \psi^{\osigma} &= \overline{\beta^{\sigma}} = -y + t \sqrt{t} + \sqrt{t},
\end{align*}
where $\sigma$ is the element of order $2$ in $\Gal(H/K)$.  Then using $b=1$ in \eqref{gbdef} one computes
\[
  -f \cdot g_1 = -f \cdot (\cF + \cF^{\osigma}) = \delta^{(1)} + m^{1/3}\delta f + \delta^{(-1)} f f^{(-1)},
\]
which combined with~\eqref{Ebzdecomp} gives
\begin{equation} \label{Ex2log1}
  \sum_{\fa \subseteq A} \frac{z^{q^{\deg \fa}}}{\pd(\rho_{\fa})}
  = \log_\rho(z + mz^3 + z^9).
\end{equation}
On the other hand, using $b = \sqrt{\theta}$ in \eqref{gbdef}, we have $g_{\sqrt{\theta}} = \sqrt{t}\cdot \cF - \sqrt{t}\cdot \cF^{\osigma}$, and we verify
\[
  -f\cdot g_{\sqrt{\theta}} =
 \delta^{(1)} - \biggl( \theta - 1 - \frac{\alpha \eta^2}{\theta} \biggr)^{1/3} \delta f
  + x_1^{1/9} \delta^{(-1)} f f^{(-1)} + \delta^{(-2)} f f^{(-1)}f^{(-2)}.
\]
If for an ideal $\fa \subseteq A$ we let $\epsilon(\fa) = \pm 1$ depending on whether $\fa$ is or is not principal, then \eqref{Ebzdecomp} yields the identity in $\power{H}{z}$,
\begin{equation} \label{Ex2log2}
   \sum_{\fa \subseteq A} \frac{\epsilon(\fa) \sqrt{\theta}\cdot z^{q^{\deg \fa}}}{\pd(\rho_{\fa})} = \log_{\rho} \biggl( \sqrt{\theta} z - \biggl( \theta - 1 - \frac{\alpha\eta^2}{\theta} \biggr) z^3 + x_1 z^9 + z^{27} \biggr).
\end{equation}
We note that \eqref{Ex2log1} and \eqref{Ex2log2} agree with previous calculations of Lutes~\cite[p.~125]{LutesThesis}.
\end{example}


\begin{thebibliography}{99}

\bibitem{And86} %
G. W. Anderson, \textit{$t$-motives}, Duke Math. J. \textbf{53} (1986), no.~2, 457--502.

\bibitem{And94} %
G. W. Anderson, \textit{Rank one elliptic $A$-modules and $A$-harmonic series}, Duke Math. J. \textbf{73} (1994), no.~3, 491--542.

\bibitem{And96} %
G. W. Anderson, \textit{Log-algebraicity of twisted $A$-harmonic series and special values of $L$-series in characteristic $p$}, J. Number Theory \textbf{60} (1996), no.~1, 165--209.

\bibitem{ABP04} %
G. W. Anderson, W. D. Brownawell, and M. A. Papanikolas, \textit{Determination of the algebraic relations among special $\Gamma$-values in positive characteristic}, Ann. of Math. (2) \textbf{160} (2004), no.~1, 237--313.

\bibitem{AndThak90} %
G. W. Anderson and D. S. Thakur, \textit{Tensor powers of the Carlitz module and zeta values}, Ann. of Math. (2) \textbf{132} (1990), no.~1, 159--191.

\bibitem{AnglesNgoDacRibeiro16a} %
B. Angl\`{e}s, T. Ngo Dac, and F. Tavares Ribeiro, \textit{Special functions and twisted $L$-series}, arXiv:1609:04792, 2016.

\bibitem{AnglesNgoDacRibeiro16b} %
B. Angl\`{e}s, T. Ngo Dac, and F. Tavares Ribeiro, \textit{Twisted characteristic $p$ zeta functions}, J. Number Theory \textbf{168} (2016), 180--214.

\bibitem{AnglesNgoDacRibeiro17} %
B. Angl\`{e}s, T. Ngo Dac, and F. Tavares Ribeiro, \textit{Stark units in positive characteristic}, Proc. Lond. Math. Soc. (3) \textbf{115} (2017), no. 4, 763--812.

\bibitem{AnglesPellarin14} %
B. Angl\`{e}s and F. Pellarin, \textit{Functional identities for $L$-series in positive characteristic}, J. Number Theory \textbf{142} (2014), 223--251.

\bibitem{AnglesPellarin15} %
B. Angl\`{e}s and F. Pellarin, \textit{Universal Gauss-Thakur sums and $L$-series}, Invent. Math. \textbf{200}
(2015), no.~2, 653--669.

\bibitem{AnglesPellarinRibeiro15} %
B. Angl\`{e}s, F. Pellarin, and F. Tavares Ribeiro, \textit{Anderson-Stark units for $\F_q[\theta]$}, arXiv:1501.06804, 2015, Trans. Amer. Math. Soc. (to appear).

\bibitem{AnglesPellarinRibeiro16} %
B. Angl\`{e}s, F. Pellarin, and F. Tavares Ribeiro, \textit{Arithmetic of positive characteristic $L$-series values in Tate algebras. With an appendix by F.~Demeslay}, Compos. Math. \textbf{152} (2016), no.~1, 1--61.

\bibitem{AnglesSimon} %
B. Angl\`{e}s and D. Simon, \textit{Power sums of polynomials over finite fields}, in preparation, 2013.

\bibitem{BP02} %
W. D. Brownawell and M. A. Papanikolas, \textit{Linear independence of Gamma values in positive characteristic}, J. reine angew. Math. \textbf{549} (2002), 91--148.

\bibitem{Carlitz35} %
L. Carlitz, \textit{On certain functions connected with polynomials in a Galois field}, Duke Math. J. \textbf{1} (1935), no.~2, 137--168.

\bibitem{CP11} %
C.-Y. Chang and M. A. Papanikolas, \textit{Algebraic relations among periods and logarithms of rank~$2$ Drinfeld modules}, Amer. J. Math. \textbf{133} (2011), no.~2, 359--391.

\bibitem{CP12} %
C.-Y. Chang and M. A. Papanikolas, \textit{Algebraic independence of periods and logarithms of Drinfeld modules. With an appendix by B.~Conrad}, J. Amer. Math. Soc. \textbf{25} (2012), no.~1, 123--150.

\bibitem{DummitHayes94} %
D. S. Dummit and D. Hayes, \textit{Rank-one Drinfeld modules on elliptic curves}, Math. Comp. \textbf{62} (1994), no.~206, 875--883.

\bibitem{EP14} %
A. El-Guindy and M. A. Papanikolas, \textit{Identities for Anderson generating functions for Drinfeld modules}, Monatsh. Math. \textbf{173} (2014), no.~4, 471--493.

\bibitem{FresnelvdPut} %
J. Fresnel and M. van der Put, \textit{Rigid Analytic Geometry and its Applications}, Birkh\"{a}user, Boston, 2004.

\bibitem{Gekeler} %
E.-U. Gekeler, \textit{Drinfeld Modular Curves}, Lecture Notes in Math., vol.~1231, Springer-Verlag, Berlin, 1986.

\bibitem{Goss83} %
D. Goss, \textit{On a new type of $L$-function for algebraic curves over finite fields}, Pacific J. Math. \textbf{105} (1983), no.~1, 143--181.

\bibitem{Goss92} %
D. Goss, \textit{$L$-series of $t$-motives and Drinfeld modules}, in: The Arithmetic of Function Fields (Columbus, OH, 1991), de Gruyter, Berlin, 1992, pp.~313--402.

\bibitem{Goss} %
D. Goss, \textit{Basic Structures of Function Field Arithmetic}, Springer-Verlag, Berlin, 1996.

\bibitem{Goss13} %
D. Goss, \textit{On the $L$-series of F.~Pellarin}, J. Number Theory \textbf{133} (2013), no.~3, 955--962.

\bibitem{HartlJuschka16} %
U. Hartl and A.-K. Juschka, \textit{Pink's theory of Hodge structures and the Hodge conjecture over function fields}, arXiv:1607.01412, 2016.

\bibitem{Hayes79} %
D. R. Hayes, \textit{Explicit class field theory in global function fields},
in: Studies in algebra and number theory, Adv. in Math. Suppl. Stud. \textbf{6},
 Academic Press, New York, NY, 1979, pp.~173--217.

\bibitem{Hayes92} %
D. R. Hayes, \textit{A brief introduction to Drinfeld modules}, in: The Arithmetic of Function Fields (Columbus, OH, 1991), W. de Gruyter, Berlin, 1992, pp.~1--32.

\bibitem{LutesThesis} %
B. A. Lutes, \textit{Special values of the Goss $L$-function and special polynomials}, Ph.D. thesis, Texas~A\&M University, 2010.

\bibitem{LP13} %
B. A. Lutes and M. A. Papanikolas, \textit{Algebraic independence of values of Goss $L$-functions at $s=1$}, J. Number Theory \textbf{133} (2013), no.~3, 1000--1011.

\bibitem{Mumford78} %
D. Mumford, \textit{An algebro-geometric construction of commuting operators and of solutions to the {T}oda lattice equation, {K}orteweg de{V}ries equation and related nonlinear equations}, in: Proceedings of the International Symposium on Algebraic Geometry (Kyoto Univ., Kyoto, 1977), Kinokuniya Book Store, Tokyo, 1978, pp.~115--153.

\bibitem{P08} %
M. A. Papanikolas, \textit{Tannakian duality for Anderson-Drinfeld motives and algebraic independence of Carlitz logarithms}, Invent. Math. \textbf{171} (2008), no.~1, 123--174.

\bibitem{PLogAlg} %
M. A. Papanikolas, \textit{Log-algebraicity on tensor powers of the Carlitz module and special values of Goss $L$-functions}, in preparation.

\bibitem{Pellarin08} %
F.~Pellarin, \textit{Aspects de l'ind\'ependance alg\'ebrique en
caract\'eristique non nulle}, S\'em. Bourbaki, vol. 2006/2007, Ast\'erisque
\textbf{317} (2008), no. 973, viii, 205--242.

\bibitem{Pellarin12} %
F. Pellarin, \textit{Values of certain $L$-series in positive characteristic}, Ann. of Math. (2) \textbf{176} (2012), no.~3, 2055--2093.

\bibitem{PellarinPerkins16} %
F. Pellarin and R. B. Perkins, \textit{On certain generating functions in positive characteristic}, Monatsh. Math. \textbf{180} (2016), no.~1, 123--144.

\bibitem{Perkins14a} %
R. B. Perkins, \textit{Explicit formulae for $L$-values in finite characteristic}, Math. Z. \textbf{278} (2014), no.~1--2, 279--299.

\bibitem{Perkins14b} %
R. B. Perkins, \textit{On Pellarin's $L$-series}, Proc. Amer. Math. Soc. \textbf{142} (2014), no.~10, 3355--3368.

\bibitem{Silverman} %
J. H. Silverman, \textit{The Arithmetic of Elliptic Curves}, 2nd ed., Springer, Dordrecht, 2009.

\bibitem{Sinha97} %
S. K. Sinha, \textit{Periods of $t$-motives and transcendence}, Duke Math. J. \textbf{88} (1997), no.~3, 465--535.

\bibitem{Taelman12} %
L. Taelman, \textit{Special $L$-values of Drinfeld modules}, Ann. of Math. (2) \textbf{175} (2012), no.~1, 369--391.

\bibitem{Thakur91} %
D. S. Thakur, \textit{Gamma functions for function fields and Drinfeld modules}, Ann. of Math. (2) \textbf{134} (1991), no.~1, 25--64.

\bibitem{Thakur92} %
D. S. Thakur, \textit{Drinfeld modules and arithmetic in the function fields}, Internat. Math. Res. Notices \textbf{1992} (1992), no. 9, 185--197.

\bibitem{Thakur93} %
D. S. Thakur, \textit{Shtukas and Jacobi sums}, Invent. Math. \textbf{111} (1993), no. 3, 557--570.

\bibitem{Thakur} %
D. S. Thakur, \textit{Function Field Arithmetic}, World Scientific Publishing, River Edge, NJ, 2004.

\end{thebibliography}
\end{document}